\documentclass[12pt,oneside,english]{amsart}
\usepackage{lmodern}
\usepackage[T1]{fontenc}
\usepackage[latin9]{inputenc}
\usepackage{geometry}
\geometry{verbose,tmargin=0.1\paperheight,bmargin=0.1\paperheight,lmargin=4cm,rmargin=4cm}
\setcounter{secnumdepth}{5}
\usepackage{babel}
\usepackage{amstext}
\usepackage{amsthm}
\usepackage{amssymb}
\usepackage[unicode=true,pdfusetitle,
 bookmarks=true,bookmarksnumbered=false,bookmarksopen=false,
 breaklinks=false,pdfborder={0 0 1},backref=false,colorlinks=false]
 {hyperref}

\makeatletter
\numberwithin{equation}{section}
\numberwithin{figure}{section}
\theoremstyle{plain}
\newtheorem{thm}{\protect\theoremname}[section]
\theoremstyle{definition}
\newtheorem{defn}[thm]{\protect\definitionname}
\theoremstyle{definition}
\newtheorem{example}[thm]{\protect\examplename}
\theoremstyle{plain}
\newtheorem{prop}[thm]{\protect\propositionname}
\ifx\proof\undefined
\newenvironment{proof}[1][\protect\proofname]{\par
\normalfont\topsep6\p@\@plus6\p@\relax
\trivlist
\itemindent\parindent
\item[\hskip\labelsep\scshape #1]\ignorespaces
}{%
\endtrivlist\@endpefalse
}
\providecommand{\proofname}{Proof}
\fi
\theoremstyle{plain}
\newtheorem{lem}[thm]{\protect\lemmaname}
\theoremstyle{plain}
\newtheorem{cor}[thm]{\protect\corollaryname}

\date{}

\title[Weak separation properties for closed subgroups]{Weak separation properties for closed subgroups of locally compact groups}

\author{Zsolt Tanko}
\address{
Department of Pure Mathematics\\
University of Waterloo\\
Waterloo, Ontario, N2L 3G1, Canada}
\email{ztanko@uwaterloo.ca}

\keywords{locally compact group, group von Neumann algebra, Fourier algebra, completely bounded multiplier, invariant projection}
\subjclass[2010]{Primary 43A15, Secondary 43A22, 43A30, 46L07}

\AtBeginDocument{
  
}

\makeatother

\providecommand{\corollaryname}{Corollary}
\providecommand{\definitionname}{Definition}
\providecommand{\examplename}{Example}
\providecommand{\lemmaname}{Lemma}
\providecommand{\propositionname}{Proposition}
\providecommand{\theoremname}{Theorem}

\begin{document}
\begin{abstract}
Three separation properties for a closed subgroup $H$ of a locally
compact group $G$ are studied: (1) the existence of a bounded approximate
indicator for $H$, (2) the existence of a completely bounded invariant
projection $VN\left(G\right)\rightarrow VN_{H}\left(G\right)$, and
(3) the approximability of the characteristic function $\chi_{H}$
by functions in $M_{cb}A\left(G\right)$ with respect to the weak$^{*}$
topology of $M_{cb}A\left(G_{d}\right)$. We show that the $H$-separation
property of Kaniuth and Lau is characterized by the existence of certain
bounded approximate indicators for $H$ and that a discretized analogue
of the $H$-separation property is equivalent to (3). Moreover, we
give a related characterization of amenability of $H$ in terms of
any group $G$ containing $H$ as a closed subgroup. The weak amenability
of $G$ or that $G_{d}$ satisfies the approximation property, in
combination with the existence of a natural projection (in the sense
of Lau and Ülger), are shown to suffice to conclude (3). Several consequences
of (2) involving the cb-multiplier completion of $A\left(G\right)$
are given. Finally, a convolution technique for averaging over the
closed subgroup $H$ is developed and used to weaken a condition for
the existence of a bounded approximate indicator for $H$.
\end{abstract}

\maketitle

\section{Introduction}

Our objective is to study connections between various forms of amenability
for a locally compact group $G$ and certain separation properties
for closed subgroups, and moreover to establish relationships between
these separation properties. Following the influential work of Ruan
\cite{Ruan}, much work on the homology of the Fourier algebra $A\left(G\right)$
as a completely contractive Banach algebra has affirmed this as the
appropriate category in which to consider $A\left(G\right)$ and the
related algebras of abstract harmonic analysis (e.g. \cite{ARS,CT,FKLS,FRS}).
Motivated by the success of this perspective, we focus on completely
bounded projections, operator amenability, and the completely bounded
multiplier algebra of $A\left(G\right)$. We consider the following
separation properties for a closed subgroup $H$ of $G$:
\begin{enumerate}
\item \label{enu:Sep_prop_BAI}The existence of a bounded approximate indicator
for $H$.
\item \label{enu:Sep_prop_InvCompl}The existence of a completely bounded
$A\left(G\right)$-bimodule projection of $VN\left(G\right)$ onto
$VN_{H}\left(G\right)$.
\item \label{enu:Sep_prop_CharFuncApprox}When the characteristic function
$\chi_{H}$ may be approximated by functions in $B\left(G\right)$
or $M_{cb}A\left(G\right)$ in the weak$^{*}$ topology on the corresponding
algebra of the discretized group.
\end{enumerate}
Condition \eqref{enu:Sep_prop_CharFuncApprox} is also considered
for subsets of $G$ that are not necessarily closed subgroups.

Bounded approximate indicators for closed subgroups were introduced
in \cite{ARS} as a means of obtaining invariant projections. They
have subsequently been shown to have an intimate connection with homological
properties of $A\left(G\right)$ and its completion in the cb-multipliers
$A_{cb}\left(G\right)$ \cite{CT}. A result of Granirer and Leinert
\cite[Theorem B2]{GL} yields bounded approximate indicators in $B\left(G\right)$
from weaker conditions than given in Definition \ref{def:Subgroup_approx_properties}.
This useful tool is unavailable for nets in $M_{cb}A\left(G\right)$
and Section \ref{sec:Averaging_over_subgroups} develops a convolution
technique relative to the closed subgroup $H$ that recovers the weakened
condition in the cb-multiplier setting.

The existence of invariant projections has been studied by several
authors in connection with other separation properties and the existence
of approximate identities for ideals in $A\left(G\right)$ \cite{ARS,CT,Derighetti,Forrest1,FKLS,KanLau}.
In particular, in \cite{KanLau} it is shown that if $G$ has the
$H$-separation property, then an invariant projection $VN\left(G\right)\rightarrow VN_{H}\left(G\right)$
exists. We show in Section \ref{sec:The_discr_sep_prop} that, in
fact, the $H$-separation property is equivalent to the existence
of a bounded approximate indicator for $H$ consisting of positive
definite functions that are identically one on $H$. An analogue of
the $H$-separation property is moreover shown to characterize the
$B\left(G\right)$-approximability of $\chi_{H}$. Condition \eqref{enu:Sep_prop_CharFuncApprox}
was first studied in \cite{ARS}, where it was claimed that a bounded
approximate indicator for $H$ exists whenever $\chi_{H}$ is $B\left(G\right)$-approximable.
This argument was later found to contain a gap \cite{ARST}. We give
examples in Section \ref{sec:Approximability_of_char_funs} showing
that the cb-multiplier analogue is false.

Conditions (1) to (3) are related to amenability properties of $G$
and to homological properties of $A\left(G\right)$, and it is this
connection that is the main focus of the present article. We show
in Section \ref{sec:The_discr_sep_prop} that $H$ is already amenable
when $\chi_{H}$ is $A\left(G\right)$-approximable for any locally
compact group $G$ containing $H$ as a closed subgroup. In the case
that $G$ is amenable, the algebra $A\left(G\right)$ has a bounded
approximate identity and \cite[Proposition 6.4]{Forrest1} then asserts
that an invariant projection $VN\left(G\right)\rightarrow VN_{H}\left(G\right)$
exists exactly when the ideal $I_{A\left(G\right)}\left(H\right)$
in $A\left(G\right)$ has a bounded approximate identity. By \cite{FKLS},
the latter occurs for every closed subgroup of $G$ and it follows
that an approximate indicator exists for every closed subgroup, since
$\left(1_{G}-e_{\alpha}\right)_{\alpha}$ is an approximate indicator
for $H$ when $\left(e_{\alpha}\right)_{\alpha}$ is a bounded approximate
identity for $I_{A\left(G\right)}\left(H\right)$. Thus all closed
subgroups of an amenable locally compact group are separated in the
strongest sense that we consider. For generic locally compact groups
the situation is more complicated, although some strong connections
are known to hold in general. For example, using the identity $A\left(G\right)\widehat{\otimes}A\left(G\right)=A\left(G\times G\right)$,
it is routine to show that an approximate indicator for the diagonal
$G_{\Delta}$ in $A\left(G\times G\right)$ (and bounded there) is
exactly a bounded approximate diagonal for $A\left(G\right)$, the
existence of which characterizes amenability of $G$ \cite{Ruan}.
Moreover, the existence of an invariant projection $VN\left(G\times G\right)\rightarrow VN_{G_{\Delta}}\left(G\times G\right)$
characterizes the operator biflatness of $A\left(G\right)$, a weaker
homological condition than operator amenability. Contractive operator
biflatness, which asks that this invariant projection to be a complete
contraction, was recently shown to be equivalent to the existence
of a contractive approximate indicator for $G_{\Delta}$ in $B\left(G\times G\right)$
\cite{CT}.

A bounded approximate indicator for $H$ always yields the approximability
of $\chi_{H}$ in the corresponding algebra, however it is unclear
when the latter follows from the existence of an invariant projection
$VN\left(G\right)\rightarrow VN_{H}\left(G\right)$ alone. In Section
\ref{sec:Approximability_of_char_funs} we show that if $H$ satisfies
certain weak forms of amenability, then the existence of a bounded
map $VN\left(G\right)\rightarrow VN_{H}\left(G\right)$ satisfying
$\lambda\left(s\right)\mapsto\chi_{H}\left(s\right)\lambda\left(s\right)$
\textemdash{} a weaker condition than the existence of an invariant
projection onto $VN_{H}\left(G\right)$ \textemdash{} implies the
$M_{cb}A\left(G\right)$-approximability of $\chi_{H}$. We give an
example in which the former condition fails while the latter holds.
Establishing relations amongst the conditions (1) to (3) in the presence
of amenability type conditions on $H$ or $G$ is the second goal
of this article.

\section{Preliminaries}

For a locally compact group $G$, the following algebras were defined
by Eymard in \cite{Eymard}, who established the basic properties
we outline below. The space of coefficient functions of strongly continuous
unitary representations of $G$,
\begin{equation}
B\left(G\right)=\left\{ \left\langle \pi\left(\cdot\right)\xi|\eta\right\rangle :\pi:G\rightarrow B\left(\mathcal{H}\right)\mbox{ is a representation, }\xi,\eta\in\mathcal{H}\right\} ,\label{eq:Fourier_Stieltjes_algebra}
\end{equation}
forms a commutative completely contractive Banach algebra, the \emph{Fourier\textendash \hspace{1mm}Stieltjes
algebra} of $G$, under pointwise multiplication and norm
\[
\left\Vert u\right\Vert _{B\left(G\right)}=\inf\left\{ \left\Vert \xi\right\Vert \left\Vert \eta\right\Vert :u=\left\langle \pi\left(\cdot\right)\xi|\eta\right\rangle \mbox{ with }\pi,\xi,\eta\mbox{ as in }\eqref{eq:Fourier_Stieltjes_algebra}\right\} .
\]
The operator space structure on $B\left(G\right)$ arises from its
identification with the dual space of the universal enveloping $C^{*}$-algebra
of $L^{1}\left(G\right)$ \textemdash{} the \emph{group $C^{*}$-algebra}
$C^{*}\left(G\right)$ of $G$ \textemdash{} via the duality
\begin{eqnarray*}
\left\langle u,\pi\left(f\right)\right\rangle _{B\left(G\right),C^{*}\left(G\right)}=\int_{G}fu &  & \left(u\in B\left(G\right),f\in L^{1}\left(G\right)\right).
\end{eqnarray*}
We refer to \cite{ER} for the theory of operator spaces and completely
contractive Banach algebras. The \emph{positive definite functions}
in $B\left(G\right)$ are those that correspond to positive functionals
on $C^{*}\left(G\right)$ and are denoted $P\left(G\right)$. For
$u\in P\left(G\right)$ we have $\left\Vert u\right\Vert _{B\left(G\right)}=\left\Vert u\right\Vert _{L^{\infty}\left(G\right)}=u\left(e\right)$.
The adjoint on $B\left(G\right)$ is given by
\begin{eqnarray*}
u^{*}\left(s\right)=\overline{u\left(s^{-1}\right)} &  & \left(u\in B\left(G\right),s\in G\right)
\end{eqnarray*}
and the self-adjoint functions in $B\left(G\right)$ correspond to
the self-adjoint bounded functionals on $C^{*}\left(G\right)$. Consequently,
given $u\in B\left(G\right)$ self-adjoint, there exist $u^{\pm}\in P\left(G\right)$
such that $u=u^{+}-u^{-}$ and $\left\Vert u\right\Vert _{B\left(G\right)}=\left\Vert u^{+}\right\Vert _{B\left(G\right)}+\left\Vert u^{-}\right\Vert _{B\left(G\right)}$.

The \emph{Fourier algebra} of $G$ is the closed ideal $A\left(G\right)$
of $B\left(G\right)$ given by the coefficients of the left regular
representation $\lambda:G\rightarrow B\left(L^{2}\left(G\right)\right)$,
which is defined by
\begin{eqnarray*}
\lambda\left(s\right)\xi\left(t\right)=\xi\left(s^{-1}t\right) &  & \left(s,t\in G,\xi\in L^{2}\left(G\right)\right).
\end{eqnarray*}
When necessary, we denote this representation of $G$ by $\lambda_{G}$.
The Fourier algebra coincides with the closure of the compactly supported
functions in $B\left(G\right)$, is closed under the adjoint, and
is regular in the sense that for any $K\subset G$ compact and $U\supset K$
open, there is a function $v\in A\left(G\right)$ with $v\left(K\right)=1$
and $v\left(G\setminus U\right)=0$. The \emph{group von Neumann algebra}
of $G$ is the weak operator topology closure $VN\left(G\right)$
of $\text{span}\lambda\left(G\right)$ in $B\left(L^{2}\left(G\right)\right)$
and is identified with the dual of the Fourier algebra via
\begin{eqnarray*}
\left\langle \lambda\left(s\right),u\right\rangle _{VN\left(G\right),A\left(G\right)}=u\left(s\right) &  & \left(s\in G,u\in A\left(G\right)\right).
\end{eqnarray*}

Given an algebra $\mathcal{A}$ of functions on $G$ and a subset
$E$ of $G$, we denote by $I_{\mathcal{A}}\left(E\right)$ the ideal
of functions in $\mathcal{A}$ vanishing on $E$. For a closed subgroup
$H$ of $G$, the annihilator $I_{A\left(G\right)}\left(H\right)^{\perp}$
coincides with the von Neumann algebra $VN_{H}\left(G\right)$ generated
by $\lambda_{G}\left(H\right)$ \cite[Theorem 6]{TT2}, which is identified
with $VN\left(H\right)$ via the normal $\ast$-isomorphism defined
by $\lambda_{H}\left(h\right)\mapsto\lambda_{G}\left(h\right)$ for
$h\in H$. The preadjoint of this normal $\ast$-isomorphism is the
restriction map $r_{H}:A\left(G\right)\rightarrow A\left(H\right)$,
which is thus a complete quotient. The closed subgroups of $G$ are
sets of \emph{spectral synthesis} for $A\left(G\right)$ \cite{Herz},
meaning that the ideal $I_{A\left(G\right)}\left(H\right)$ of $A\left(G\right)$
coincides with the closure of the functions in $A\left(G\right)$
that have compact support disjoint from $H$.

\emph{The completely bounded (cb-) multiplier algebra} $M_{cb}A\left(G\right)$
of $A\left(G\right)$ is the algebra of functions $m$ on $G$ for
which $mA\left(G\right)\subset A\left(G\right)$ and the map $M_{m}:VN\left(G\right)\rightarrow VN\left(G\right)$
is completely bounded, where $M_{m}$ is the adjoint of the multiplication
map $u\mapsto mu$ on $A\left(G\right)$. Such functions are continuous
and bounded, and form a completely contractive Banach algebra under
pointwise multiplication and norm $\left\Vert m\right\Vert _{M_{cb}A\left(G\right)}=\left\Vert M_{m}\right\Vert _{cb}$.
The cb-multipliers of $A\left(G\right)$ admit the following representation
theorem \cite{Jolissaint}.
\begin{thm}
\emph{\label{thm:Gilbert}(Gilbert's representation theorem)} Let
$G$ be a locally compact group. A function $m$ on $G$ is in $M_{cb}A\left(G\right)$
if and only if there exists a Hilbert space $\mathcal{H}$ and bounded
continuous maps $P,Q:G\rightarrow\mathcal{H}$ such that 
\begin{eqnarray*}
m\left(s^{-1}t\right)=\left\langle P\left(t\right)|Q\left(s\right)\right\rangle  &  & \left(s,t\in G\right).
\end{eqnarray*}
 The norm $\left\Vert m\right\Vert _{M_{cb}A\left(G\right)}$ is the
infimum of the quantities $\left\Vert P\right\Vert _{\infty}\left\Vert Q\right\Vert _{\infty}$
taken over all such maps $P$ and $Q$ and Hilbert spaces $\mathcal{H}$.
\end{thm}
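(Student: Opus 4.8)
The statement combines an equivalence with a norm identity, and I would separate the two implications, disposing of the easy converse first. Suppose $m(s^{-1}t)=\langle P(t)\,|\,Q(s)\rangle$ for bounded continuous $P,Q\colon G\to\mathcal{H}$. Define $V_P,V_Q\colon L^2(G)\to L^2(G)\otimes\mathcal{H}\cong L^2(G;\mathcal{H})$ by $(V_P\xi)(t)=\xi(t)P(t^{-1})$ and $(V_Q\xi)(t)=\xi(t)Q(t^{-1})$, so that $\|V_P\|\le\|P\|_\infty$ and $\|V_Q\|\le\|Q\|_\infty$. Using $((\lambda(s)\otimes 1_{\mathcal{H}})V_P\xi)(t)=\xi(s^{-1}t)P(t^{-1}s)$ and the hypothesis rewritten as $\langle P(t^{-1}s)\,|\,Q(t^{-1})\rangle=m(s)$, a short computation of matrix coefficients gives $V_Q^{*}(\lambda(s)\otimes 1_{\mathcal{H}})V_P=m(s)\lambda(s)$ for every $s\in G$. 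Therefore $\Phi\colon x\mapsto V_Q^{*}(x\otimes 1_{\mathcal{H}})V_P$ is a normal completely bounded map on $VN(G)$ with $\|\Phi\|_{cb}\le\|P\|_\infty\|Q\|_\infty$ and $\Phi(\lambda(s))=m(s)\lambda(s)$; since $\text{span}\,\lambda(G)$ is weak$^{*}$-dense in $VN(G)$ and $\Phi$ is normal, $\Phi$ maps $VN(G)$ into itself, and its predual is $u\mapsto mu$ on $A(G)$. Hence $m\in M_{cb}A(G)$, $M_m=\Phi$, and $\|m\|_{M_{cb}A(G)}\le\|P\|_\infty\|Q\|_\infty$ (and $m(t)=\langle P(t)\,|\,Q(e)\rangle$ is continuous); passing to the infimum gives one half of the norm identity.

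For the forward implication let $m\in M_{cb}A(G)$, so $M_m\colon VN(G)\to VN(G)$ is completely bounded with $M_m(\lambda(s))=m(s)\lambda(s)$ and, being the adjoint of multiplication by $m$ on $A(G)$, is weak$^{*}$-continuous. I would apply the representation theorem for completely bounded maps (Wittstock--Paulsen), in its version for normal completely bounded maps between von Neumann algebras, to get a Hilbert space $\mathcal{K}$, a normal $*$-representation $\pi\colon VN(G)\to B(\mathcal{K})$, and bounded $V,W\colon L^2(G)\to\mathcal{K}$ with $M_m(x)=V^{*}\pi(x)W$ and $\|V\|\,\|W\|=\|m\|_{M_{cb}A(G)}$. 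Since every normal representation of $VN(G)$ is unitarily equivalent to a compression of an amplification $x\mapsto x\otimes 1_{\mathcal{H}}$ on $L^2(G)\otimes\mathcal{H}$ by a projection in the commutant $VN(G)'\overline{\otimes}B(\mathcal{H})$, I can absorb $\pi$ and rewrite the identity as
\[
m(s)\lambda(s)=V^{*}(\lambda(s)\otimes 1_{\mathcal{H}})W\qquad(s\in G),
\]
with $V,W\colon L^2(G)\to L^2(G)\otimes\mathcal{H}$ still satisfying $\|V\|\,\|W\|=\|m\|_{M_{cb}A(G)}$ --- exactly the shape met above, but with abstract operators in place of the concrete $V_P,V_Q$.

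The crux, which I expect to be the main obstacle, is to manufacture from $V,W$ genuine bounded continuous maps $P,Q\colon G\to\mathcal{H}$ with $m(s^{-1}t)=\langle P(t)\,|\,Q(s)\rangle$ and $\|P\|_\infty\|Q\|_\infty\le\|m\|_{M_{cb}A(G)}$; equivalently, to replace $V,W$ by operators of the special ``multiplier'' form $\xi\mapsto(t\mapsto\xi(t)P(t^{-1}))$. One approach exploits that the displayed identity is unchanged when $V,W$ are conjugated by $\rho(r)\otimes 1_{\mathcal{H}}$ (because the right translations $\rho(r)$ commute with every $\lambda(s)$): averaging over $r\in G$ --- done in a regularized, weak sense when $G$ is noncompact, which is the spirit of the averaging-over-a-subgroup construction of Section \ref{sec:Averaging_over_subgroups}, here applied to $G$ itself --- should turn $V,W$ into intertwiners of $\rho\otimes 1_{\mathcal{H}}$, and such intertwiners are necessarily of multiplier form, with $\|P\|_\infty\le\|V\|$ and $\|Q\|_\infty\le\|W\|$. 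An alternative is transference: show that $(s,t)\mapsto m(s^{-1}t)$ is a bounded Schur multiplier on $B(L^2(G))$ with Schur norm $\|m\|_{M_{cb}A(G)}$, apply Grothendieck's factorization of Schur multipliers to obtain bounded \emph{measurable} $P,Q$ with $m(s^{-1}t)=\langle P(t)\,|\,Q(s)\rangle$ almost everywhere, and upgrade to everywhere-defined continuous maps by convolving $P$ and $Q$ with an approximate identity and invoking the continuity of $m$. On either route the delicate points are the bookkeeping of norms, so that the infimum is actually attained, and --- on the second route --- the passage from an almost-everywhere factorization to a continuous one; the remaining verifications mirror those in the converse direction.
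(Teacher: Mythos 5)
First, a point of comparison: the paper does not prove Theorem \ref{thm:Gilbert} at all --- it is quoted as background and attributed to \cite{Jolissaint} (the result is due to Gilbert, Bo\.{z}ejko--Fendler, and Jolissaint) --- so there is no internal proof to measure your argument against. On its own terms, your sufficiency direction is complete and correct: the operators $V_{P},V_{Q}$ do satisfy $V_{Q}^{*}\left(\lambda\left(s\right)\otimes1_{\mathcal{H}}\right)V_{P}=m\left(s\right)\lambda\left(s\right)$, normality together with the weak$^{*}$ density of $\mbox{span}\,\lambda\left(G\right)$ places the range of $\Phi$ in $VN\left(G\right)$, and the estimate $\left\Vert m\right\Vert _{M_{cb}A\left(G\right)}\leq\left\Vert P\right\Vert _{\infty}\left\Vert Q\right\Vert _{\infty}$ follows. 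This is the standard argument for that half.

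The necessity direction, however, contains a genuine gap: both routes you propose defer precisely the step that constitutes the theorem. Route (a) cannot work as described for general $G$. The set of pairs $\left(V,W\right)$ realizing $V^{*}\left(\lambda\left(s\right)\otimes1_{\mathcal{H}}\right)W=m\left(s\right)\lambda\left(s\right)$ is indeed invariant under simultaneous conjugation by $\rho\left(r\right)$ and $\rho\left(r\right)\otimes1_{\mathcal{H}}$, but extracting a fixed point of this action by averaging over the noncompact group $G$ requires an invariant mean with values in a weak$^{*}$ compact set of operators, i.e.\ amenability of $G$; the operator $\Omega_{f}$ of Section \ref{sec:Averaging_over_subgroups} is a local smoothing by a compactly supported probability density, not an invariant average, and will not produce exact intertwiners. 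Moreover, operators intertwining $\rho$ with $\rho\otimes1_{\mathcal{H}}$ are columns of elements of $VN\left(G\right)$, not operators of multiplier form; the multiplier-form operators are those commuting with the multiplication action of $L^{\infty}\left(G\right)$, so even a successful averaging over $\rho\left(G\right)$ would not finish. Route (b) is the correct and standard strategy, but the inequality $\left\Vert S_{\tilde{m}}\right\Vert \leq\left\Vert m\right\Vert _{M_{cb}A\left(G\right)}$ for the Schur multiplier with kernel $\tilde{m}\left(s,t\right)=m\left(s^{-1}t\right)$ is itself the substantive transference theorem of Bo\.{z}ejko--Fendler; it does not follow from the Wittstock representation alone and is usually obtained by conjugating $M_{m}\otimes\mbox{id}_{B\left(L^{2}\left(G\right)\right)}$ by the fundamental unitary $\xi\left(s,t\right)\mapsto\xi\left(s,st\right)$. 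Finally, in the upgrade from measurable to continuous you must convolve on the left, setting $P^{\prime}\left(t\right)=\left(r\mapsto f\left(r\right)^{1/2}P\left(rt\right)\right)$ and $Q^{\prime}\left(s\right)=\left(r\mapsto f\left(r\right)^{1/2}Q\left(rs\right)\right)$, which preserves the kernel because $\left(rs\right)^{-1}\left(rt\right)=s^{-1}t$, and then conclude that $\left\langle P^{\prime}\left(t\right)|Q^{\prime}\left(s\right)\right\rangle =m\left(s^{-1}t\right)$ everywhere because both sides are continuous and agree almost everywhere --- a pointwise argument along the null set $\left\{ \left(rs,rt\right):r\in G\right\} $ does not suffice. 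With the transference step supplied, route (b) closes; as written, your proposal establishes only the easy half of the equivalence.
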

It follows from Gilbert's representation theorem that $B\left(G\right)\subset M_{cb}A\left(G\right)$
and that $\left\Vert \cdot\right\Vert _{M_{cb}A\left(G\right)}\leq\left\Vert \cdot\right\Vert _{B\left(G\right)}$
on $B\left(G\right)$. The restriction $r_{H}:M_{cb}A\left(G\right)\rightarrow M_{cb}A\left(H\right)$
is a well defined complete contraction \cite[Proposition 1.12]{dCH}.
The norm closure of $A\left(G\right)$ in the potentially smaller
cb-multiplier norm is denoted $A_{cb}\left(G\right)$ and forms a
completely contractive Banach subalgebra of the cb-multipliers with
spectrum $G$ \cite[Proposition 2.2]{FRS}.

Since cb-multipliers of $A\left(G\right)$ lie in $L^{\infty}\left(G\right)$,
we may consider $L^{1}\left(G\right)$ as a subspace of the dual of
$M_{cb}A\left(G\right)$. Taking the completion of $L^{1}\left(G\right)$
with respect to the norm given, for $f\in L^{1}\left(G\right)$, by
\[
\left\Vert f\right\Vert _{Q\left(G\right)}=\sup\left\{ \left|\int_{G}fm\right|:m\in M_{cb}A\left(G\right)\mbox{ with }\left\Vert m\right\Vert _{M_{cb}A\left(G\right)}\leq1\right\} 
\]
yields a predual $Q\left(G\right)$ for $M_{cb}A\left(G\right)$ \cite[Proposition 1.10]{dCH}.
With this predual, the cb-multipliers $M_{cb}A\left(G\right)$ form
a completely contractive dual Banach algebra in the sense of Runde
\cite{Runde1}. It follows from Theorem \ref{thm:Gilbert} that $\left\Vert \cdot\right\Vert _{L^{\infty}\left(G\right)}\leq\left\Vert \cdot\right\Vert _{M_{cb}A\left(G\right)}$
and consequently $\left\Vert \cdot\right\Vert _{Q\left(G\right)}\leq\left\Vert \cdot\right\Vert _{L^{1}\left(G\right)}$
on $L^{1}\left(G\right)$. We let $\mathcal{C}_{c}\left(G\right)$
and $\mathcal{C}_{b}\left(G\right)$ denote respectively the continuous
compactly supported and continuous bounded functions of $G$. We have
\begin{eqnarray*}
A\left(G\right)\subset A_{cb}\left(G\right) & \mbox{and} & A\left(G\right)\subset B\left(G\right)\subset M_{cb}A\left(G\right)\subset\mathcal{C}_{b}\left(G\right).
\end{eqnarray*}
The second containment is strict unless $G$ is compact. An unpublished
result of Ruan asserts that $A\left(G\right)$ is closed in $M_{cb}A\left(G\right)$
exactly when $G$ is amenable, so that that the first and third containments
are strict unless $G$ is amenable. For $u\in A\left(G\right)$ and
$v\in B\left(G\right)$, 
\[
\left\Vert u\right\Vert _{\infty}\leq\left\Vert u\right\Vert _{A_{cb}\left(G\right)}=\left\Vert u\right\Vert _{M_{cb}A\left(G\right)}\leq\left\Vert u\right\Vert _{A\left(G\right)}=\left\Vert u\right\Vert _{B\left(G\right)},
\]
\[
\left\Vert v\right\Vert _{\infty}\leq\left\Vert v\right\Vert _{M_{cb}A\left(G\right)}\leq\left\Vert v\right\Vert _{B\left(G\right)},
\]
so the first, third, and fourth inclusions are in general contractive
while the second is isometric.

The locally compact group $G$ equipped with the discrete topology
is denoted $G_{d}$. The inclusions $B\left(G\right)\subset B\left(G_{d}\right)$
and $M_{cb}A\left(G\right)\subset M_{cb}A\left(G_{d}\right)$ are
complete isometries (\cite{Eymard} and \cite[Corollary 6.3]{Spronk},
respectively). For each $s\in G$ the point mass $\delta_{s}\in\ell^{1}\left(G_{d}\right)$
is contained in $C^{*}\left(G_{d}\right)$ and in $Q\left(G_{d}\right)$
as the evaluation functional at $s$, from which it follows that convergence
in the $\mbox{weak}^{*}$ topology of $B\left(G_{d}\right)$ or $M_{cb}A\left(G_{d}\right)$
implies pointwise convergence. It will be important for us that, on
bounded sets, the converse holds (see \cite{Eymard} regarding $B\left(G_{d}\right)$
and \cite[Lemma 2.6]{FRS} or the useful Appendix A of \cite{Knudby}
regarding $M_{cb}A\left(G_{d}\right)$).

The separation properties for closed subgroups that we discuss are
defined as follows.
\begin{defn}
\label{def:Subgroup_approx_properties}Let $G$ be a locally compact
group and $H$ a closed subgroup.
\begin{enumerate}
\item A\textbf{\emph{ }}\emph{bounded approximate indicator} for $H$ is
a bounded net $\left(m_{\alpha}\right)_{\alpha}$ in $M_{cb}A\left(G\right)$
satisfying
\begin{enumerate}
\item $\left\Vert ur_{H}\left(m_{\alpha}\right)-u\right\Vert _{A\left(H\right)}\rightarrow0$
for all $u\in A\left(H\right)$, and
\item $\left\Vert um_{\alpha}\right\Vert _{A\left(G\right)}\rightarrow0$
for all $u\in I_{A\left(G\right)}\left(H\right)$.
\end{enumerate}
If $\left(m_{\alpha}\right)_{\alpha}$ is in $B\left(G\right)$ and
is also bounded there, then we refer to a bounded approximate indicator
in $B\left(G\right)$.
\item A completely bounded projection $VN\left(G\right)\rightarrow VN_{H}\left(G\right)$
is \emph{invariant} if it is an $A\left(G\right)$-bimodule map.
\item Given a norm closed subalgebra $\mathcal{A}$ of $B\left(G\right)$
or $M_{cb}A\left(G\right)$ and $E\subset G$, the characteristic
function $\chi_{E}$ is called \emph{$\mathcal{A}$-approximable}
if $\chi_{E}$ is in the weak$^{*}$ closure of $\mathcal{A}$ in
$B\left(G_{d}\right)$ or $M_{cb}A\left(G_{d}\right)$, respectively.
\end{enumerate}
\end{defn}
The \emph{operator amenability} of the Fourier algebra asserts the
existence of a \emph{bounded approximate diagonal} in the operator
space projective tensor product $A\left(G\right)\widehat{\otimes}A\left(G\right)$.
This is a bounded net $\left(d_{\alpha}\right)_{\alpha}$ in the tensor
product which satisfies the norm convergence
\begin{eqnarray*}
u\cdot d_{\alpha}-d_{\alpha}\cdot u\rightarrow0\text{ and }\Delta\left(d_{\alpha}\right)u\rightarrow u &  & \left(u\in A\left(G\right)\right),
\end{eqnarray*}
where $\Delta:A\left(G\right)\widehat{\otimes}A\left(G\right)\rightarrow A\left(G\right)$
is the completely bounded linearization of the multiplication and
the $A\left(G\right)$ action on the tensor product is given by $u\cdot\left(v\otimes w\right)=uv\otimes w$
and $\left(v\otimes w\right)\cdot u=v\otimes wu$, for $u,v,w\in A\left(G\right)$.
The product map $\Delta$ is a complete quotient and the \emph{operator
biflatness} of $A\left(G\right)$ asserts the existence of a completely
bounded $A\left(G\right)$-bimodule left inverse to its adjoint $\Delta^{*}:VN\left(G\right)\rightarrow VN\left(G\right)\overline{\otimes}VN\left(G\right)$,
where the $A\left(G\right)$ action on $VN\left(G\right)$ is the
dual action. The identification $A\left(G\right)\widehat{\otimes}A\left(G\right)=A\left(G\times G\right)$
(see \cite[Theorem 7.2.4]{ER}) yields $\ker\Delta=I_{A\left(G\times G\right)}\left(G_{\Delta}\right)$,
from which it follows that such a left inverse is exactly an invariant
projection $VN\left(G\times G\right)\rightarrow VN_{G_{\Delta}}\left(G\times G\right)$.
A thorough account of the homological conditions we consider is given
in \cite{Runde}.

Leptin's classical result states that amenability of a locally compact
group $G$ is characterized by the existence of a bounded approximate
identity in $A\left(G\right)$ \cite{Leptin}. The locally compact
group $G$ is called \emph{weakly amenable} when $A_{cb}\left(G\right)$
has a bounded approximate identity. This weaker notion was introduced
by de Cannière and Haagerup \cite{dCH}, who showed that the free
group on two generators is weakly amenable. The weak amenability of
$G$ is equivalent to the assertion that every cb-multiplier is the
weak$^{*}$ limit of a bounded net in $A\left(G\right)$. When $A\left(G\right)$
is merely weak$^{*}$ dense in $M_{cb}A\left(G\right)$, the group
$G$ is said to have the \emph{approximation property} (see \cite{HK}).

\section{\label{sec:Approximability_of_char_funs}Approximability of characteristic
functions}

In this section, we investigate when characteristic functions of subsets
of a locally compact group $G$ are approximable. For a closed subgroup
$H$ of $G$, the assertion that $\chi_{H}$ is approximable may be
viewed as a very weak form of subgroup separation. In \cite{ARS},
the \emph{discretized Fourier\textendash Stieltjes algebra} $B^{d}\left(G\right)$
is defined to be the $\mbox{weak}^{*}$ closure of $B\left(G\right)$
in $B\left(G_{d}\right)$. We make the analogous definition for the
cb-multipliers of $G$.
\begin{defn}
The \emph{discretized cb-multiplier algebra} $M_{cb}^{d}A\left(G\right)$
of a locally compact group $G$ is the weak$^{*}$ closure of $M_{cb}A\left(G\right)$
in $M_{cb}A\left(G_{d}\right)$. The predual $Q\left(G_{d}\right)/M_{cb}^{d}\left(G\right)_{\perp}$
of $M_{cb}^{d}A\left(G\right)$ is denoted $Q^{d}\left(G\right)$.
Let $A^{d}\left(G\right)$ and $A_{cb}^{d}\left(G\right)$ denote
the weak$^{*}$ closures of $A\left(G\right)$ in $B\left(G_{d}\right)$
and in $M_{cb}A\left(G_{d}\right)$, respectively.
\end{defn}
Given $E\subset G$, for $\chi_{E}$ to be approximable, it must already
be that $\chi_{E}\in M_{cb}A\left(G_{d}\right)$, and the subsets
of $G$ for which this occurs are not well understood when $G_{d}$
is not amenable. In the amenable case, the algebras $M_{cb}A\left(G_{d}\right)$
and $B\left(G_{d}\right)$ coincide \cite{Losert} and the Cohen-Host
idempotent theorem provides a complete description of the subsets
of $G$ with characteristic function in $B\left(G_{d}\right)$. For
discrete groups $G$, determining the approximable characteristic
functions is exactly the problem of determining the sets with characteristic
function in the cb-multipliers. When $G$ is moreover weakly amenable,
Corollary 5.4 of \cite{CT} together with Corollary 3.5 of \cite{FRS}
implies that such sets $E$ are exactly those for which the ideal
$I_{A\left(G\right)}\left(E\right)$ has a cb-multiplier bounded approximate
identity.
\begin{example}
\label{exa:BAI_implies_approximable}Let $G$ be a locally compact
group and $H$ a closed subgroup for which a bounded approximate indicator
$\left(m_{\alpha}\right)_{\alpha}$ exists. We show that $\chi_{H}$
is approximable. If $s\in H$, then we may find $u\in A\left(H\right)$
with $u\left(s\right)=1$, in which case 
\[
\left|m_{\alpha}\left(s\right)-1\right|\leq\left\Vert ur_{H}\left(m_{\alpha}\right)-u\right\Vert _{L^{\infty}\left(H\right)}\leq\left\Vert ur_{H}\left(m_{\alpha}\right)-u\right\Vert _{A\left(H\right)}\rightarrow0.
\]
If $s\in G\setminus H$, then \cite[Lemme 3.2]{Eymard} asserts that
we may find $w\in I_{A\left(G\right)}\left(H\right)$ with $w\left(s\right)=1$,
and
\[
\left|m_{\alpha}\left(s\right)\right|\leq\left\Vert wm_{\alpha}\right\Vert _{L^{\infty}\left(G\right)}\leq\left\Vert wm_{\alpha}\right\Vert _{A\left(G\right)}\rightarrow0.
\]
Since $M_{cb}A\left(G\right)$ is contained in $M_{cb}A\left(G_{d}\right)$
and weak$^{*}$ and pointwise convergence coincide on bounded subsets
of $M_{cb}A\left(G_{d}\right)$, it follows that $\left(m_{\alpha}\right)_{\alpha}$
has weak$^{*}$ limit $\chi_{H}$ in $M_{cb}A\left(G_{d}\right)$.
\end{example}
The algebra $M_{cb}^{d}A\left(G\right)$ is a $\mbox{weak}^{*}$ closed
subalgebra of $M_{cb}A\left(G_{d}\right)$ and thus has separately
$\mbox{weak}^{*}$ continuous multiplication. If we impose a rather
weak condition on the discrete group $G_{d}$, then every function
in $M_{cb}A\left(G_{d}\right)$ may be approximated in the weak$^{*}$
topology by functions in $A\left(G\right)$.\sloppy
\begin{prop}
\label{pro:Approx_prop_implies_approximable}Let $G$ be a locally
compact group. The inclusion $A_{cb}\left(G_{d}\right)\subset A_{cb}^{d}\left(G\right)$
always holds. Consequently, if $G_{d}$ has the approximation property,
then $M_{cb}A\left(G_{d}\right)=A_{cb}^{d}\left(G\right)$.
\end{prop}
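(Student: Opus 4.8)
The plan is to prove the unconditional inclusion $A_{cb}(G_{d})\subseteq A_{cb}^{d}(G)$ by reducing it to point masses and exhibiting an explicit bounded net in $A(G)$, and then to deduce $M_{cb}A(G_{d})=A_{cb}^{d}(G)$ under the approximation property almost formally. For the reduction: the finitely supported functions on $G_{d}$ are $\|\cdot\|_{A(G_{d})}$-dense in $A(G_{d})$ (the Fourier algebra of a discrete group being the closure in $B(G_{d})$ of its finitely supported functions), and since $\|\cdot\|_{M_{cb}A(G_{d})}\leq\|\cdot\|_{A(G_{d})}$ they are therefore cb-norm dense in $A_{cb}(G_{d})$. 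As $A_{cb}^{d}(G)$ is a weak$^{*}$-closure, hence a norm-closed subspace of $M_{cb}A(G_{d})$, and the finitely supported functions are spanned by the point masses $\chi_{\{s\}}$, it suffices to prove $\chi_{\{s\}}\in A_{cb}^{d}(G)$ for every $s\in G$. I use throughout that $M_{cb}A(G)\hookrightarrow M_{cb}A(G_{d})$ is isometric, that $\|u\|_{M_{cb}A(G)}\leq\|u\|_{A(G)}$ for $u\in A(G)$, and that on bounded subsets of $M_{cb}A(G_{d})$ weak$^{*}$ convergence coincides with pointwise convergence.

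First I would treat $s=e$. Let $U$ run over the compact symmetric neighbourhoods of $e$, directed by reverse inclusion, and put $u_{U}(t)=|U|^{-1}\langle\lambda_{G}(t)\chi_{U}\mid\chi_{U}\rangle=|U|^{-1}\,|tU\cap U|$. Since $u_{U}=\langle\lambda_{G}(\cdot)\xi_{U}\mid\xi_{U}\rangle$ for the unit vector $\xi_{U}=|U|^{-1/2}\chi_{U}\in L^{2}(G)$, it is a positive-definite element of $A(G)$ with $\|u_{U}\|_{A(G)}=u_{U}(e)=1$, so $\|u_{U}\|_{M_{cb}A(G_{d})}\leq1$. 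Moreover $u_{U}(e)=1$ always, while for $t\neq e$ continuity of multiplication and the Hausdorff property supply a $U_{0}$ with $t\notin U_{0}U_{0}$, whence $tU\cap U=\emptyset$ and $u_{U}(t)=0$ once $U\subseteq U_{0}$. Hence $u_{U}\to\chi_{\{e\}}$ pointwise on $G$, and by boundedness also in the weak$^{*}$ topology of $M_{cb}A(G_{d})$, so $\chi_{\{e\}}\in A_{cb}^{d}(G)$. Running the same argument with the left translates $t\mapsto u_{U}(s^{-1}t)=|U|^{-1}\langle\lambda_{G}(t)\chi_{U}\mid\lambda_{G}(s)\chi_{U}\rangle$, which again lie in $A(G)$ with norm at most $1$ and converge pointwise to $\chi_{\{s\}}$, gives $\chi_{\{s\}}\in A_{cb}^{d}(G)$ for every $s$, and hence $A_{cb}(G_{d})\subseteq A_{cb}^{d}(G)$.

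For the consequence, assume $G_{d}$ has the approximation property, i.e.\ $A(G_{d})$ is weak$^{*}$-dense in $M_{cb}A(G_{d})$. By the inclusion just proved, $A(G_{d})\subseteq A_{cb}(G_{d})\subseteq A_{cb}^{d}(G)$, and $A_{cb}^{d}(G)$ is weak$^{*}$-closed in $M_{cb}A(G_{d})$; therefore the weak$^{*}$-closure of $A(G_{d})$, which is all of $M_{cb}A(G_{d})$, lies inside $A_{cb}^{d}(G)$, and together with the trivial inclusion $A_{cb}^{d}(G)\subseteq M_{cb}A(G_{d})$ this yields equality. The one step requiring thought is the choice of the net: regularity of $A(G)$ by itself produces functions concentrated near $e$ but gives no control of their multiplier norms, whereas the normalized diagonal coefficients $t\mapsto|U|^{-1}|tU\cap U|$ of the regular representation are positive definite and hence have norm exactly their value $1$ at $e$. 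The remaining ingredients — density of the finitely supported functions in $A(G_{d})$, the coincidence of pointwise and weak$^{*}$ convergence on bounded sets, and the fact that a weak$^{*}$-closure is norm closed — are routine.
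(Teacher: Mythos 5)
Your proof is correct, but it takes a different route from the paper on the main inclusion. The paper disposes of $A(G_{d})\subset A_{cb}^{d}(G)$ by citing Proposition 1 of Bekka--Kaniuth--Lau--Schlichting, which gives $A(G_{d})\subset A^{d}(G)$ in $B(G_{d})$, and then transfers to the weak$^{*}$ topology of $M_{cb}A(G_{d})$ before taking norm closures. You instead inline that cited result: you reduce to the point masses via density of the finitely supported functions in $A(G_{d})$ (and hence, by the norm inequality, in $A_{cb}(G_{d})$), and you approximate $\chi_{\{s\}}$ pointwise by the translated normalized coefficients $t\mapsto\left|U\right|^{-1}\left|s^{-1}tU\cap U\right|$ of $\lambda_{G}$, which lie in the unit ball of $A(G)$; boundedness then upgrades pointwise to weak$^{*}$ convergence in $M_{cb}A(G_{d})$. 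The details check out (the symmetric compact neighbourhoods form a directed set, $t\notin U_{0}U_{0}$ forces $tU\cap U=\emptyset$ for $U\subset U_{0}$, and the translates are coefficients of $\lambda_{G}$ against unit vectors, hence of norm at most one). What your version buys is self-containedness and the extra information that the approximating net can be taken in $A(G)\cap P(G)$ with norm one; what the paper's version buys is brevity. For the consequence, you invoke the paper's stated definition of the approximation property (weak$^{*}$ density of $A(G_{d})$ in $M_{cb}A(G_{d})$) and conclude in one line from weak$^{*}$ closedness of $A_{cb}^{d}(G)$, which is valid; the paper instead takes a net $e_{\alpha}\to 1_{G}$ and uses separate weak$^{*}$ continuity of multiplication to get $me_{\alpha}\to m$ with $me_{\alpha}\in A(G_{d})$, an argument that needs only the formally weaker hypothesis that $1_{G}$ itself is weak$^{*}$ approximable from $A(G_{d})$. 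Both deductions are sound for the proposition as stated.
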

\begin{proof}
By Proposition 1 of \cite{BKLS} we have $A\left(G_{d}\right)\subset A^{d}\left(G\right)$
and, because weak$^{*}$ convergence in $B\left(G_{d}\right)$ implies
weak$^{*}$ convergence in $M_{cb}A\left(G_{d}\right)$, it follows
that $A\left(G_{d}\right)\subset A_{cb}^{d}\left(G\right)$. Taking
norm closures in $M_{cb}A\left(G_{d}\right)$ yields $A_{cb}\left(G_{d}\right)\subset A_{cb}^{d}\left(G\right)$.
If $\left(e_{\alpha}\right)_{\alpha}$ is a net in $A\left(G_{d}\right)$
converging weak$^{*}$ to $1_{G}$ in $M_{cb}A\left(G_{d}\right)$
and if $m\in M_{cb}A\left(G_{d}\right)$, then $me_{\alpha}\in A\left(G_{d}\right)$
for each $\alpha$ and $me_{\alpha}\overset{w^{*}}{\rightarrow}m$
in $M_{cb}A\left(G_{d}\right)$, implying that $m\in A_{cb}^{d}\left(G\right)$.
\end{proof}
In Section 3 of \cite{ARS}, that the Fourier\textendash Stieltjes
algebra is the dual of a $C^{*}$-algebra is noted to imply that the
unique $\mbox{weak}^{*}$ continuous extension of the inclusion $B\left(G\right)\subset B^{d}\left(G\right)$
is a quotient map $B\left(G\right)^{**}\rightarrow B^{d}\left(G\right)$.
We provide a more concrete construction of the analogous canonical
map $M_{cb}A\left(G\right)^{**}\rightarrow M_{cb}^{d}A\left(G\right)$
that exploits the relation between $M_{cb}A\left(G\right)$ and $M_{cb}^{d}A\left(G\right)$.
Let $\iota_{d}:M_{cb}A\left(G\right)\rightarrow M_{cb}A\left(G_{d}\right)$
and $\kappa_{Q}:Q\left(G_{d}\right)\rightarrow Q\left(G_{d}\right)^{**}$
denote the inclusion maps. By the bipolar theorem $M_{cb}^{d}A\left(G\right)_{\perp}=M_{cb}A\left(G\right)_{\perp}$
and we have 
\[
\kappa_{Q}\left(M_{cb}A\left(G\right)_{\perp}\right)\subset M_{cb}A\left(G\right)^{\perp}=\mbox{im}\left(\iota_{d}\right)^{\perp}=\ker\left(\iota_{d}^{*}\right),
\]
which together imply that the composition
\[
Q\left(G_{d}\right)\overset{\kappa_{Q}}{\rightarrow}Q\left(G_{d}\right)^{**}\overset{\iota_{d}^{*}}{\rightarrow}Q\left(G\right)^{**}
\]
induces a map $Q^{d}\left(G\right)\rightarrow Q\left(G\right)^{**}$.
Denote the adjoint of this induced map by $\tau:M_{cb}A\left(G\right)^{**}\rightarrow M_{cb}^{d}A\left(G\right)$.
It is straightforward to verify that $\tau$ extends inclusion $M_{cb}A\left(G\right)\subset M_{cb}^{d}A\left(G\right)$,
so that $\tau\left(m\right)\left(s\right)=m\left(s\right)$ for $m\in M_{cb}A\left(G\right)$
and $s\in G$. It follows that if a net $\left(m_{\alpha}\right)_{\alpha}$
in $M_{cb}A\left(G\right)$ converges weak$^{*}$ to $\omega\in M_{cb}A\left(G\right)^{**}$,
then
\begin{eqnarray*}
\tau\left(\omega\right)\left(s\right)=\lim_{\alpha}m_{\alpha}\left(s\right) &  & \left(s\in G\right),
\end{eqnarray*}
so that the map $\tau$ extracts pointwise limits from nets in $M_{cb}A\left(G\right)$
that are weak$^{*}$ convergent in the bidual. Moreover, the range
of $\tau$ consists of exactly those functions in $M_{cb}^{d}A\left(G\right)$
that are limits of bounded nets in $M_{cb}A\left(G\right)$.
\begin{prop}
\label{pro:chi_H_approximable}Let $G$ be a locally compact group
and $E\subset G$. If there is a bounded map $\Psi:VN\left(G\right)\rightarrow VN\left(G\right)$
satisfying $\Psi\left(\lambda\left(s\right)\right)=\chi_{E}\left(s\right)\lambda\left(s\right)$
for all $s\in G$, then $\chi_{E}A\left(G\right)\subset A_{cb}^{d}\left(G\right)$.
If, moreover, $\chi_{E}\in M_{cb}A\left(G_{d}\right)$ and $1_{G}$
is $A_{cb}\left(G\right)$-approximable, then $\chi_{E}$ is $A_{cb}\left(G\right)$-approximable.
\end{prop}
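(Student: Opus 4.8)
The plan is to establish the first assertion by an adjoint-plus-Goldstine argument and then to deduce the second from the fact that $M_{cb}A(G_{d})$ is a dual Banach algebra.

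For the first assertion, fix $u\in A(G)$ and pass to the adjoint $\Psi^{*}\colon VN(G)^{*}\to VN(G)^{*}$ of the bounded operator $\Psi$. Viewing $A(G)$ inside $A(G)^{**}=VN(G)^{*}$ via the canonical embedding, the element $\Psi^{*}(u)\in A(G)^{**}$ has
\[
\left\langle \Psi^{*}(u),\lambda(s)\right\rangle =\left\langle u,\Psi(\lambda(s))\right\rangle =\chi_{E}(s)\,u(s)\qquad(s\in G),
\]
and $\left\Vert \Psi^{*}(u)\right\Vert \leq\left\Vert \Psi\right\Vert \left\Vert u\right\Vert _{A(G)}$. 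By Goldstine's theorem there is a net $(u_{\alpha})_{\alpha}$ in $A(G)$ with $\left\Vert u_{\alpha}\right\Vert _{A(G)}\leq\left\Vert \Psi^{*}(u)\right\Vert$ for all $\alpha$ that converges to $\Psi^{*}(u)$ in $\sigma(A(G)^{**},VN(G))$; pairing against $\lambda(s)$ shows $u_{\alpha}(s)\to\chi_{E}(s)u(s)$ for every $s\in G$. Thus $(u_{\alpha})_{\alpha}$ is a bounded net in $A(G)$, hence bounded in $M_{cb}A(G_{d})$ since $\left\Vert \cdot\right\Vert _{M_{cb}A(G_{d})}\leq\left\Vert \cdot\right\Vert _{A(G)}$ on $A(G)$, and it converges pointwise to $\chi_{E}u$. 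As pointwise and weak$^{*}$ convergence coincide on bounded subsets of $M_{cb}A(G_{d})$, it follows that $\chi_{E}u\in M_{cb}A(G_{d})$ and that $(u_{\alpha})_{\alpha}$ converges weak$^{*}$ to $\chi_{E}u$; since each $u_{\alpha}\in A(G)$, this places $\chi_{E}u$ in the weak$^{*}$ closure $A_{cb}^{d}(G)$, proving $\chi_{E}A(G)\subset A_{cb}^{d}(G)$.

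For the second assertion, assume moreover that $\chi_{E}\in M_{cb}A(G_{d})$ and that $1_{G}$ is $A_{cb}(G)$-approximable. Then $1_{G}$ lies in the weak$^{*}$ closure of $A_{cb}(G)$ in $M_{cb}A(G_{d})$; as $A(G)$ is norm dense in $A_{cb}(G)$ we may choose a net $(e_{\alpha})_{\alpha}$ in $A(G)$ converging weak$^{*}$ to $1_{G}$. By the first part, $\chi_{E}e_{\alpha}\in A_{cb}^{d}(G)$ for each $\alpha$. Since $M_{cb}A(G_{d})$ is a dual Banach algebra (in the sense of Runde \cite{Runde1}), multiplication by the fixed element $\chi_{E}\in M_{cb}A(G_{d})$ is weak$^{*}$ continuous, so $\chi_{E}e_{\alpha}$ converges weak$^{*}$ to $\chi_{E}\cdot 1_{G}=\chi_{E}$; as $A_{cb}^{d}(G)$ is weak$^{*}$ closed, $\chi_{E}\in A_{cb}^{d}(G)$, that is, $\chi_{E}$ is $A_{cb}(G)$-approximable.

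The substantive point is the first paragraph: because $\Psi$ is merely bounded and need not be normal, there is no predual map to work with directly, so one is forced into the bidual and must invoke Goldstine's theorem to trade the abstract functional $\Psi^{*}(u)$ for a genuinely bounded net in $A(G)$; transporting convergence from $A(G)^{**}$ into $M_{cb}A(G_{d})$ is then routine given the coincidence of pointwise and weak$^{*}$ convergence on bounded sets. The second paragraph is bookkeeping once the dual Banach algebra structure of $M_{cb}A(G_{d})$ is in hand; note that the hypothesis $\chi_{E}\in M_{cb}A(G_{d})$ is exactly what is needed for ``multiplication by $\chi_{E}$'' to be a weak$^{*}$-continuous map there.
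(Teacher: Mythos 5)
Your proof is correct and follows essentially the same route as the paper's: both pass to the bidual via $\Psi^{*}$, use the weak$^{*}$ density of the ball of $A\left(G\right)$ in that of $A\left(G\right)^{**}$, and transfer convergence to $M_{cb}A\left(G_{d}\right)$ via the coincidence of pointwise and weak$^{*}$ convergence on bounded sets, with the second assertion handled identically through separate weak$^{*}$ continuity of multiplication. The only difference is presentational: you unpack with explicit Goldstine nets what the paper packages into the canonical map $\tau:M_{cb}A\left(G\right)^{**}\rightarrow M_{cb}^{d}A\left(G\right)$ constructed before the proposition.
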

\begin{proof}
Let $\kappa_{A}:A\left(G\right)\rightarrow A\left(G\right)^{**}$
and $\iota_{A}:A\left(G\right)\rightarrow M_{cb}A\left(G\right)$
be the inclusions and let $\sigma$ denote the composition
\[
A\left(G\right)\overset{\kappa_{A}}{\longrightarrow}A\left(G\right)^{**}\overset{\Psi^{*}}{\longrightarrow}A\left(G\right)^{**}\overset{\iota_{A}^{**}}{\longrightarrow}M_{cb}A\left(G\right)^{**}\overset{\tau}{\longrightarrow}M_{cb}^{d}A\left(G\right).
\]
 For $u\in A\left(G\right)$ and $s\in G$, with $\delta_{s}$ denoting
the point mass at $s$ in $\ell^{1}\left(G_{d}\right)\subset Q\left(G_{d}\right)\subset Q^{d}\left(G\right)$,
\begin{eqnarray*}
\sigma\left(u\right)\left(s\right) & = & \left\langle \sigma\left(u\right),\delta_{s}\right\rangle _{M_{cb}^{d}A\left(G\right),Q^{d}\left(G\right)}\\
 & = & \left\langle \iota_{A}^{**}\Psi^{*}\kappa_{A}\left(u\right),\iota_{d}^{*}\kappa_{Q}\left(\delta_{s}\right)\right\rangle _{M_{cb}A\left(G\right)^{**},M_{cb}A\left(G\right)^{*}}\\
 & = & \left\langle \Psi^{*}\kappa_{A}\left(u\right),\lambda\left(s\right)\right\rangle _{A\left(G\right)^{**},VN\left(G\right)}\\
 & = & \left\langle \Psi\left(\lambda\left(s\right)\right),u\right\rangle _{VN\left(G\right),A\left(G\right)}\\
 & = & \chi_{E}\left(s\right)u\left(s\right).
\end{eqnarray*}
Thus $\chi_{E}u=\sigma\left(u\right)\in M_{cb}^{d}A\left(G\right)$
for all $u\in A\left(G\right)$. Since $\tau$ extends the inclusion
of $M_{cb}A\left(G\right)$ into $M_{cb}^{d}A\left(G\right)$, it
maps $A\left(G\right)$ into $A_{cb}^{d}\left(G\right)$, which together
with the weak$^{*}$ continuity of $\tau\iota_{A}^{**}$ implies that
$\sigma$ in fact has range in $A_{cb}^{d}\left(G\right)$. If $\left(e_{\alpha}\right)_{\alpha}$
is a net in $A\left(G\right)$ converging weak$^{*}$ to $1_{G}$
in $M_{cb}A\left(G_{d}\right)$, then that $\chi_{E}\in M_{cb}A\left(G_{d}\right)$
implies $\chi_{E}e_{\alpha}\overset{w^{*}}{\rightarrow}\chi_{E}$,
by weak$^{*}$ continuity of multiplication in $M_{cb}A\left(G_{d}\right)$.
Since $\chi_{E}e_{\alpha}\in A_{cb}^{d}\left(G\right)$, we conclude
that $\chi_{E}$ is $A_{cb}\left(G\right)$-approximable.
\end{proof}
For a subgroup $H$ of a locally compact group $G$, it is straight
forward to verify that $\chi_{H}$ is a positive definite function
on $G_{d}$, so that $\chi_{H}\in B\left(G_{d}\right)\subset M_{cb}A\left(G_{d}\right)$
and the second part of Proposition \ref{pro:chi_H_approximable} is
applicable to characteristic functions of subgroups. It is shown in
Lemma \ref{lem:Weaker_cond_for_chiH_approximable} below that, when
$\chi_{H}A\left(G\right)\subset A_{cb}^{d}\left(G\right)$, we need
only require $1_{H}$ to be $A_{cb}\left(H\right)$-approximable to
deduce that $\chi_{H}$ is $A_{cb}\left(G\right)$-approximable.

In \cite{L=0000DC}, Lau and Ülger define a projection $\Psi$ on
$VN\left(G\right)$ to be \emph{natural} if $\Psi\left(\lambda\left(s\right)\right)=\chi_{E}\left(s\right)\lambda\left(s\right)$
for some subset $E$ of $G$. We may interpret Proposition \ref{pro:chi_H_approximable}
as imposing restrictions on which subsets of $G$ can arise from a
natural projection.
\begin{example}
\label{exa:Inv_proj_are_natural}Let $G$ be a locally compact group
and $H$ a closed subgroup. We show that an invariant projection $\Psi:VN\left(G\right)\rightarrow VN_{H}\left(G\right)$
is natural. It is clear that $\Psi\left(\lambda\left(s\right)\right)=\lambda\left(s\right)$
for $s\in H$. Let $s\in G\setminus H$ and let $T_{\alpha}\in\mbox{span}\lambda\left(H\right)$
converge weak$^{*}$ to $\Psi\left(\lambda\left(s\right)\right)$
in $VN\left(G\right)$. If $u\in A\left(G\right)$ with $u\left(s\right)=1$
and $\left.u\right|_{H}=0$, then 
\begin{eqnarray*}
\left\langle u\cdot\lambda\left(s\right),v\right\rangle =\left\langle \lambda\left(s\right),vu\right\rangle =v\left(s\right)u\left(s\right)=v\left(s\right)=\left\langle \lambda\left(s\right),v\right\rangle  &  & \left(v\in A\left(G\right)\right),
\end{eqnarray*}
so $u\cdot\lambda\left(s\right)=\lambda\left(s\right)$. If $S=\sum_{j}\alpha_{j}\lambda\left(s_{j}\right)\in\mbox{span}\lambda\left(H\right)$,
then 
\begin{eqnarray*}
{\textstyle \left\langle u\cdot S,v\right\rangle =\sum_{j}\alpha_{j}v\left(s_{j}\right)u\left(s_{j}\right)=0} &  & \left(v\in A\left(G\right)\right),
\end{eqnarray*}
so that $u\cdot S=0$. Thus $0=u\cdot T_{\alpha}\overset{w^{*}}{\rightarrow}u\cdot\Psi\left(\lambda\left(s\right)\right)=\Psi\left(u\cdot\lambda\left(s\right)\right)=\Psi\left(\lambda\left(s\right)\right)$
and $\Psi\left(\lambda\left(s\right)\right)=0$.
\end{example}
Let $G$ be a locally compact group. A bounded net $\left(e_{\alpha}\right)_{\alpha}$
in $A\left(G\right)$ or $A_{cb}\left(G\right)$ is called a \emph{$\Delta$-weak
bounded approximate identity} if it converges pointwise to $1_{G}$.
This notion was introduced in \cite{JL} and shown in \cite{L=0000DC}
to be closely related to the existence of natural projections. Reasoning
as in Example \ref{exa:BAI_implies_approximable} shows that a bounded
approximate identity for $A_{cb}\left(G\right)$ is a $\Delta$-weak
one, so that $A_{cb}\left(G\right)$ has $\Delta$-weak bounded approximate
identity whenever $G$ is weakly amenable. The function $1_{G}$ is
$A_{cb}\left(G\right)$-approximable when $A_{cb}\left(G\right)$
has a $\Delta$-weak bounded approximate identity.

The construction of the map $\tau$ above and the proof of Proposition
\ref{pro:chi_H_approximable} may be carried out with $M_{cb}A\left(G\right)$
replaced by $B\left(G\right)$, but, to conclude that $\chi_{H}$
is $B\left(G\right)$-approximable using this result, we require $1_{G}$
to be in the weak$^{*}$ closure of $A\left(G\right)$ in $B\left(G_{d}\right)$.
The proof of Theorem \ref{thm:H_amen_iff_chiH_approx} shows that
the canonical map $A\left(G\right)^{**}\rightarrow A^{d}\left(G\right)$
extending inclusion $A\left(G\right)\subset A^{d}\left(G\right)$
is surjective, so that $1_{G}$ is then the weak$^{*}$ limit of a
bounded net, which is then a $\Delta$-weak bounded approximate identity
for $A\left(G\right)$, implying that $G$ is already amenable \cite[Theorem 5.1]{K=0000DC}.
It is the availability of $\Delta$-weak bounded approximate identities
in $A_{cb}\left(G\right)$ for a larger class of groups \textemdash{}
containing at least the weakly amenable groups \textemdash{} that
is responsible for the utility of Proposition \ref{pro:chi_H_approximable}.
Whether the existence of a $\Delta$-weak bounded approximate identity
for $A_{cb}\left(G\right)$ implies weak amenability of $G$ appears
to be an open question.

For a locally compact group $G$ and closed subgroup $H$, let 
\begin{eqnarray*}
r_{H}:M_{cb}A\left(G_{d}\right)\rightarrow M_{cb}A\left(H_{d}\right) & \mbox{and} & e_{H}:M_{cb}A\left(H_{d}\right)\rightarrow M_{cb}A\left(G_{d}\right)
\end{eqnarray*}
denote respectively the restriction map and the extension by zero.
For a function $f$ on $H$ let $\overset{\circ}{f}$ denote its extension
by zero to $G$. The restriction $r_{H}$ is a complete quotient and
extension $e_{H}$ a complete isometry (\cite[Corollary 6.3]{Spronk}
or \cite[Proposition 4.1]{Stan}), and it is clear that $r_{H}e_{H}=\mbox{id}_{M_{cb}A\left(H_{d}\right)}$
and $e_{H}r_{H}=M_{\chi_{H}}$, the multiplication by $\chi_{H}$.
\begin{lem}
\label{lem:Restr_cont}Let $G$ be a locally compact group and $H$
a closed subgroup. The maps $r_{H}$ and $e_{H}$ are weak$^{*}$
continuous.
\end{lem}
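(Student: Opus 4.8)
The plan is to invoke the standard principle that the adjoint of a bounded linear operator between the preduals of two dual Banach spaces is automatically continuous for the respective weak$^{*}$ topologies, and then to exhibit $r_{H}$ and $e_{H}$ as such adjoints. Both maps are already bounded — $r_{H}$ is a complete quotient and $e_{H}$ a complete isometry — and the ambient algebras carry the canonical preduals $Q\left(G_{d}\right)$ and $Q\left(H_{d}\right)$, each of which is the completion of the corresponding $\ell^{1}$-space and so contains it as a dense subspace. Thus the whole problem reduces to producing bounded maps $j_{H}:Q\left(H_{d}\right)\rightarrow Q\left(G_{d}\right)$ and $\rho_{H}:Q\left(G_{d}\right)\rightarrow Q\left(H_{d}\right)$ with $j_{H}^{*}=r_{H}$ and $\rho_{H}^{*}=e_{H}$.

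For $r_{H}$, I would let $j_{H}$ be the continuous extension of the inclusion $\ell^{1}\left(H_{d}\right)\rightarrow\ell^{1}\left(G_{d}\right)$, $f\mapsto\overset{\circ}{f}$. The one point that uses the hypotheses is the boundedness of this inclusion for the $Q$-norms: if $f\in\ell^{1}\left(H_{d}\right)$ and $m\in M_{cb}A\left(G_{d}\right)$ with $\left\Vert m\right\Vert\leq1$, then since $\overset{\circ}{f}$ is supported on $H$ the pairing $\left\langle m,\overset{\circ}{f}\right\rangle$ in $M_{cb}A\left(G_{d}\right)\times Q\left(G_{d}\right)$ equals the pairing $\left\langle r_{H}\left(m\right),f\right\rangle$ in $M_{cb}A\left(H_{d}\right)\times Q\left(H_{d}\right)$, and as $r_{H}$ is contractive one obtains $\left\Vert \overset{\circ}{f}\right\Vert_{Q\left(G_{d}\right)}\leq\left\Vert f\right\Vert_{Q\left(H_{d}\right)}$; density of $\ell^{1}\left(H_{d}\right)$ in $Q\left(H_{d}\right)$ then yields the desired contraction $j_{H}$. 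Pairing $j_{H}^{*}\left(m\right)$ with the point masses $\delta_{h}$ ($h\in H$) gives $j_{H}^{*}\left(m\right)\left(h\right)=\left\langle m,\overset{\circ}{\delta_{h}}\right\rangle=m\left(h\right)$, and since a member of $M_{cb}A\left(H_{d}\right)\subseteq\mathcal{C}_{b}\left(H_{d}\right)$ is determined by its values on $H$, we conclude $j_{H}^{*}=r_{H}$.

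For $e_{H}$, I would let $\rho_{H}$ be the continuous extension of the restriction map $\ell^{1}\left(G_{d}\right)\rightarrow\ell^{1}\left(H_{d}\right)$, $f\mapsto\left.f\right|_{H}$. Here boundedness for the $Q$-norms uses that $e_{H}$ is an \emph{isometry}: for $m'\in M_{cb}A\left(H_{d}\right)$ with $\left\Vert m'\right\Vert\leq1$ one has $\left\langle m',\left.f\right|_{H}\right\rangle=\left\langle e_{H}\left(m'\right),f\right\rangle$ and $\left\Vert e_{H}\left(m'\right)\right\Vert\leq1$, whence $\left\Vert\left.f\right|_{H}\right\Vert_{Q\left(H_{d}\right)}\leq\left\Vert f\right\Vert_{Q\left(G_{d}\right)}$ and $\rho_{H}$ extends to a contraction. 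Evaluating $\rho_{H}^{*}\left(m'\right)$ at $\delta_{s}$ gives $m'\left(s\right)$ when $s\in H$, since $\rho_{H}\left(\delta_{s}\right)=\delta_{s}$, and $0$ when $s\in G\setminus H$, since $\rho_{H}\left(\delta_{s}\right)=0$; that is, $\rho_{H}^{*}\left(m'\right)=\overset{\circ}{m'}$, so $\rho_{H}^{*}=e_{H}$.

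With the preadjoints in hand, $r_{H}=j_{H}^{*}$ and $e_{H}=\rho_{H}^{*}$ are weak$^{*}$ continuous. The only genuine obstacle is the verification that $j_{H}$ and $\rho_{H}$ map the $Q$-preduals boundedly into one another — everything else is duality bookkeeping — and this is exactly where the facts cited just before the lemma, namely contractivity of the restriction and isometry of the extension on the cb-multiplier level (\cite[Corollary 6.3]{Spronk}, \cite[Proposition 4.1]{Stan}), are used.
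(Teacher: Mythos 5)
Your argument is correct and is essentially the paper's proof: both reduce weak$^{*}$ continuity to the existence of a preadjoint, verified on the dense subspace of finitely supported functions via the pairing identities $\left\langle m,\overset{\circ}{f}\right\rangle =\left\langle r_{H}\left(m\right),f\right\rangle $ and $\left\langle e_{H}\left(m'\right),f\right\rangle =\left\langle m',\left.f\right|_{H}\right\rangle $. The only cosmetic difference is that the paper works directly with the adjoints $r_{H}^{*}$ and $e_{H}^{*}$ and checks that they carry $\mathcal{C}_{c}\left(H_{d}\right)$ and $\mathcal{C}_{c}\left(G_{d}\right)$ into the preduals (so boundedness is automatic), whereas you build the preadjoints from explicit formulas and verify their boundedness separately.
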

\begin{proof}
If $f=\sum_{j=1}^{n}\alpha_{j}\delta_{x_{j}}\in\mathcal{C}_{c}\left(H_{d}\right)$,
then $\overset{\circ}{f}\in\mathcal{C}_{c}\left(G_{d}\right)$ and
\begin{eqnarray*}
\left\langle r_{H}^{*}\left(f\right),m\right\rangle =\sum_{j=1}^{n}\alpha_{j}m\left(x_{j}\right)=\left\langle \overset{\circ}{f},m\right\rangle  &  & \left(m\in M_{cb}A\left(G_{d}\right)\right),
\end{eqnarray*}
showing that $r_{H}^{*}\left(\mathcal{C}_{c}\left(H_{d}\right)\right)\subset Q\left(G_{d}\right)$.
Since $\mathcal{C}_{c}\left(H_{d}\right)$ is dense in $Q\left(H_{d}\right)$,
it follows that $r_{H}$ is $\mbox{weak}^{*}$ continuous.

Now if $f=\sum_{j=1}^{n}\alpha_{j}\delta_{x_{j}}\in\mathcal{C}_{c}\left(G_{d}\right)$,
then, for $m\in M_{cb}A\left(H_{d}\right)$,
\[
\left\langle e_{H}^{*}\left(f\right),m\right\rangle =\left\langle \overset{\circ}{m},f\right\rangle =\sum_{j=1}^{n}\alpha_{j}\chi_{H}\left(x_{j}\right)m\left(x_{j}\right)=\left\langle \sum_{j=1}^{n}\alpha_{j}\chi_{H}\left(x_{j}\right)\delta_{x_{j}},m\right\rangle ,
\]
and so $\sum_{j=1}^{n}\alpha_{j}\chi_{H}\left(x_{j}\right)\delta_{x_{j}}\in\mathcal{C}_{c}\left(H_{d}\right)$.
Therefore $e_{H}^{*}\left(\mathcal{C}_{c}\left(G_{d}\right)\right)\subset Q\left(H_{d}\right)$
and the claim follows by density, as above.
\end{proof}
\begin{lem}
\label{lem:chiH_mult_AofG_char}Let $G$ be a locally compact group
and $H$ a closed subgroup. The restriction $r_{H}$ maps $M_{cb}^{d}A\left(G\right)$
into $M_{cb}^{d}A\left(H\right)$. In addition, the following are
equivalent:

\begin{enumerate}
\item \label{enu:chi_H_AG}$\chi_{H}A\left(G\right)\subset A_{cb}^{d}\left(G\right)$.
\item \label{enu:Ext_AdofH_in_AdofG}$e_{H}\left(A_{cb}^{d}\left(H\right)\right)\subset A_{cb}^{d}\left(G\right)$.
\item \label{enu:AdofG_is_direct_sum}$A_{cb}^{d}\left(G\right)=I_{A_{cb}^{d}\left(G\right)}\left(G\setminus H\right)\oplus I_{A_{cb}^{d}\left(G\right)}\left(H\right)$
(algebraic direct sum).
\end{enumerate}
\end{lem}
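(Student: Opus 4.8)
The plan is to first handle the short assertion that $r_H$ maps $M_{cb}^{d}A\left(G\right)$ into $M_{cb}^{d}A\left(H\right)$, then prove the three-way equivalence by a cycle \eqref{enu:chi_H_AG}$\Rightarrow$\eqref{enu:Ext_AdofH_in_AdofG}$\Rightarrow$\eqref{enu:AdofG_is_direct_sum}$\Rightarrow$\eqref{enu:chi_H_AG}. For the first assertion: $r_H:M_{cb}A\left(G_d\right)\rightarrow M_{cb}A\left(H_d\right)$ is weak$^*$ continuous by Lemma \ref{lem:Restr_cont}, and it maps $A\left(G\right)$ into $A\left(H\right)$ (indeed the restriction is a complete quotient onto $A\left(H\right)$, and $A\left(H\right)\subset A_{cb}\left(H\right)\subset A_{cb}^d\left(H\right)$, hence into $M_{cb}^dA\left(H\right)$); since $M_{cb}^dA\left(G\right)$ is the weak$^*$ closure of $M_{cb}A\left(G\right)$, weak$^*$ continuity gives $r_H\bigl(M_{cb}^dA\left(G\right)\bigr)\subset\overline{r_H\bigl(M_{cb}A\left(G\right)\bigr)}^{\,w^*}\subset M_{cb}^dA\left(H\right)$, using that $r_H\bigl(M_{cb}A\left(G\right)\bigr)\subset M_{cb}A\left(H\right)$ by \cite[Proposition 1.12]{dCH}.

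For \eqref{enu:chi_H_AG}$\Rightarrow$\eqref{enu:Ext_AdofH_in_AdofG}: given $m\in A_{cb}^d\left(H\right)$, write it as a weak$^*$ limit of a bounded net $\left(u_\beta\right)_\beta$ in $A\left(H\right)$ in $M_{cb}A\left(H_d\right)$. Each $u_\beta$ is the restriction $r_H\left(v_\beta\right)$ of some $v_\beta\in A\left(G\right)$ (restriction is onto $A\left(H\right)$), and then $e_H\left(u_\beta\right)=\chi_H\cdot \overset{\circ}{u_\beta}$; I need to realize $e_H\left(u_\beta\right)$ as $\chi_H w_\beta$ for some $w_\beta\in A\left(G\right)$ that is bounded in $M_{cb}$-norm — taking $w_\beta = v_\beta$ times a suitable fixed cutoff is awkward, so instead I would argue more directly: $e_H\left(u_\beta\right) = \chi_H\, v_\beta$ as functions on $G$ (since $r_H e_H=\mathrm{id}$ and $e_H r_H = M_{\chi_H}$), so $e_H\left(u_\beta\right) \in \chi_H A\left(G\right)\subset A_{cb}^d\left(G\right)$ by \eqref{enu:chi_H_AG}. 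Since $e_H$ is weak$^*$ continuous (Lemma \ref{lem:Restr_cont}) and the net $\left(e_H\left(u_\beta\right)\right)_\beta$ is bounded in $M_{cb}A\left(G_d\right)$ with weak$^*$ limit $e_H\left(m\right)$, and $A_{cb}^d\left(G\right)$ is weak$^*$ closed, we get $e_H\left(m\right)\in A_{cb}^d\left(G\right)$. The point needing care is that $\chi_H A\left(G\right)$ need not be norm-closed, but $A_{cb}^d\left(G\right)$ being weak$^*$ closed absorbs the limit regardless.

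For \eqref{enu:Ext_AdofH_in_AdofG}$\Rightarrow$\eqref{enu:AdofG_is_direct_sum}: $I_{A_{cb}^d\left(G\right)}\left(G\setminus H\right)$ consists of those $m\in A_{cb}^d\left(G\right)$ with $m=M_{\chi_H}m=e_H r_H(m)$; since $r_H$ maps $A_{cb}^d\left(G\right)$ into $M_{cb}^dA\left(H\right)$ (first assertion, applied with $A_{cb}$-closures — here I use that $r_H\left(A\left(G\right)\right)\subset A\left(H\right)$ so $r_H\left(A_{cb}^d\left(G\right)\right)\subset A_{cb}^d\left(H\right)$), hypothesis \eqref{enu:Ext_AdofH_in_AdofG} gives $e_H r_H\left(A_{cb}^d\left(G\right)\right)\subset A_{cb}^d\left(G\right)$, so $M_{\chi_H}$ restricts to an idempotent operator on $A_{cb}^d\left(G\right)$; its range is $I_{A_{cb}^d\left(G\right)}\left(G\setminus H\right)$ and its kernel is $\{m : \chi_H m = 0\} = I_{A_{cb}^d\left(G\right)}\left(H\right)$, giving the algebraic direct sum decomposition. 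Finally \eqref{enu:AdofG_is_direct_sum}$\Rightarrow$\eqref{enu:chi_H_AG}: the decomposition supplies a (not necessarily bounded) projection $P:A_{cb}^d\left(G\right)\rightarrow A_{cb}^d\left(G\right)$ onto the first summand, and $P$ must be multiplication by $\chi_H$ pointwise, since for $m=m_1+m_2$ with $m_1$ vanishing on $G\setminus H$ and $m_2$ vanishing on $H$ one has $\chi_H m = m_1$; thus $\chi_H m\in A_{cb}^d\left(G\right)$ for every $m\in A_{cb}^d\left(G\right)$, in particular for $m\in A\left(G\right)\subset A_{cb}^d\left(G\right)$, which is \eqref{enu:chi_H_AG}. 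The main obstacle I anticipate is bookkeeping the two parallel closure operations ($B\left(G_d\right)$-type versus $M_{cb}A\left(G_d\right)$-type, and $A$ versus $A_{cb}$ closures) and making sure every net invoked is norm-bounded so that weak$^*$ closedness can be used; the algebra itself is formal once the weak$^*$ continuity of $r_H$ and $e_H$ from Lemma \ref{lem:Restr_cont} is in hand.
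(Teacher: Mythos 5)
Your argument is correct and follows essentially the same route as the paper's: weak$^*$ continuity of $r_H$ and $e_H$ for the first claim and for (1)$\Rightarrow$(2), the identity $e_Hr_H=M_{\chi_H}$ together with $r_H\left(A_{cb}^{d}\left(G\right)\right)\subset A_{cb}^{d}\left(H\right)$ for (2)$\Rightarrow$(3), and the pointwise identification of the decomposition with multiplication by $\chi_H$ for (3)$\Rightarrow$(1). One small remark: in (1)$\Rightarrow$(2) you need not (and in general cannot) take the approximating net in $A\left(H\right)$ to be bounded --- an element of a weak$^*$ closure need only be a limit of some net --- but this is harmless, since weak$^*$ continuity of $e_H$ and weak$^*$ closedness of $A_{cb}^{d}\left(G\right)$ absorb arbitrary nets, so the boundedness you worry about at the end is not actually needed anywhere in this lemma.
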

\begin{proof}
Since the restriction of a cb-multiplier of $G$ to the closed subgroup
$H$ yields a cb-multiplier of $H$ \cite[Proposition 1.12]{dCH},
the first claim follows from $\mbox{weak}^{*}$ continuity of $r_{H}$.

\eqref{enu:chi_H_AG} implies \eqref{enu:Ext_AdofH_in_AdofG}: If
$\chi_{H}A\left(G\right)\subset A_{cb}^{d}\left(G\right)$, then,
because $A\left(H\right)=r_{H}\left(A\left(G\right)\right)$,
\[
e_{H}\left(A\left(H\right)\right)=e_{H}\left(r_{H}\left(A\left(G\right)\right)\right)=\chi_{H}A\left(G\right)\subset A_{cb}^{d}\left(G\right)
\]
 and \eqref{enu:Ext_AdofH_in_AdofG} follows by $\mbox{weak}^{*}$
continuity of $e_{H}$.

\eqref{enu:Ext_AdofH_in_AdofG} implies \eqref{enu:AdofG_is_direct_sum}:
If $e_{H}\left(A_{cb}^{d}\left(H\right)\right)\subset A_{cb}^{d}\left(G\right)$,
then given $m\in A_{cb}^{d}\left(G\right)$, the $\mbox{weak}^{*}$
continuity of $r_{H}$ implies $r_{H}\left(m\right)\in A_{cb}^{d}\left(H\right)$
and it follows that $\chi_{H}m=e_{H}r_{H}\left(m\right)\in A_{cb}^{d}\left(G\right)$,
whence $\chi_{G\setminus H}m=m-\chi_{H}m\in A_{cb}^{d}\left(G\right)$
and therefore $m=\chi_{H}m+\chi_{G\setminus H}m\in I_{A_{cb}^{d}\left(G\right)}\left(G\setminus H\right)+I_{A_{cb}^{d}\left(G\right)}\left(H\right)$.
These ideals clearly have trivial intersection.

\eqref{enu:AdofG_is_direct_sum}implies \eqref{enu:chi_H_AG}: Write
$m\in A\left(G\right)$ as $m=m_{1}+m_{2}$ for $m_{1}\in I_{A_{cb}^{d}\left(G\right)}\left(G\setminus H\right)$
and $m_{2}\in I_{A_{cb}^{d}\left(G\right)}\left(H\right)$, in which
case $\chi_{H}m=m_{1}\in A_{cb}^{d}\left(G\right)$.
\end{proof}
\sloppy

When the equivalent conditions of Lemma \ref{lem:chiH_mult_AofG_char}
hold, condition \eqref{enu:Ext_AdofH_in_AdofG} implies that $e_{H}\left(A_{cb}^{d}\left(H\right)\right)=I_{A_{cb}^{d}\left(G\right)}\left(G\setminus H\right)$
and, because $e_{H}$ is isometric, condition \eqref{enu:AdofG_is_direct_sum}
asserts that $A_{cb}^{d}\left(G\right)=A_{cb}^{d}\left(H\right)\oplus I_{A_{cb}^{d}\left(G\right)}\left(H\right)$.
\begin{lem}
\label{lem:Weaker_cond_for_chiH_approximable}Let $G$ be a locally
compact group and $H$ a closed subgroup. If $\chi_{H}A\left(G\right)\subset A_{cb}^{d}\left(G\right)$
and $1_{H}$ is $A_{cb}\left(H\right)$-approximable, then $\chi_{H}$
is $A_{cb}\left(G\right)$-approximable.
\end{lem}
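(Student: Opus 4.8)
The plan is to reduce the statement to the single membership $\chi_H \in A_{cb}^d(G)$ and then obtain it from Lemma \ref{lem:chiH_mult_AofG_char}. First I would record the purely formal point that, since $A_{cb}(G)$ is by definition the norm closure of $A(G)$ inside $M_{cb}A(G)$ and the inclusion $M_{cb}A(G) \subset M_{cb}A(G_d)$ is isometric, the weak$^*$ closure of $A_{cb}(G)$ in $M_{cb}A(G_d)$ coincides with the weak$^*$ closure of $A(G)$ there, namely $A_{cb}^d(G)$; the same remark applied to $H$ shows that the hypothesis ``$1_H$ is $A_{cb}(H)$-approximable'' means exactly $1_H \in A_{cb}^d(H)$, and that the conclusion ``$\chi_H$ is $A_{cb}(G)$-approximable'' means exactly $\chi_H \in A_{cb}^d(G)$.

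With the statement rephrased this way, the argument is short. By the equivalence \eqref{enu:chi_H_AG}$\Leftrightarrow$\eqref{enu:Ext_AdofH_in_AdofG} of Lemma \ref{lem:chiH_mult_AofG_char}, the hypothesis $\chi_H A(G) \subset A_{cb}^d(G)$ is the same as $e_H\bigl(A_{cb}^d(H)\bigr) \subset A_{cb}^d(G)$. Since extension by zero carries the constant function $1_H$ on $H$ to $\chi_H$ on $G$, i.e. $e_H(1_H) = \chi_H$, combining $1_H \in A_{cb}^d(H)$ with this containment gives $\chi_H = e_H(1_H) \in e_H\bigl(A_{cb}^d(H)\bigr) \subset A_{cb}^d(G)$, which is the conclusion.

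I do not expect a genuine obstacle here: the substance has already been packaged into Lemma \ref{lem:chiH_mult_AofG_char} and into the weak$^*$ continuity of $e_H$ established in Lemma \ref{lem:Restr_cont}. The only points requiring care are the bookkeeping between the two flavours of approximability (weak$^*$ closure of $A_{cb}$ versus of $A$) and the verification that $e_H$ really sends $1_H$ to $\chi_H$ as elements of $M_{cb}A(G_d)$. If one wished to sidestep Lemma \ref{lem:chiH_mult_AofG_char}, the same conclusion follows directly: choose a net $(u_\alpha)_\alpha$ in $A(H)$ with $u_\alpha \to 1_H$ weak$^*$ in $M_{cb}A(H_d)$, lift each $u_\alpha$ to some $v_\alpha \in A(G)$ along the quotient map $r_H$, note that $e_H(u_\alpha) = e_H r_H(v_\alpha) = \chi_H v_\alpha \in \chi_H A(G) \subset A_{cb}^d(G)$, and pass to the limit using the weak$^*$ continuity of $e_H$ (Lemma \ref{lem:Restr_cont}) together with the fact that $A_{cb}^d(G)$ is weak$^*$ closed, to get $\chi_H = e_H(1_H) \in A_{cb}^d(G)$.
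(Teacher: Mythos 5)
Your proof is correct and is essentially the paper's own argument: the paper's proof is the single line that Lemma \ref{lem:chiH_mult_AofG_char}\eqref{enu:Ext_AdofH_in_AdofG} gives $\chi_{H}=e_{H}\left(1_{H}\right)\in A_{cb}^{d}\left(G\right)$, which is exactly your main route via the equivalence \eqref{enu:chi_H_AG}$\Leftrightarrow$\eqref{enu:Ext_AdofH_in_AdofG}. Your preliminary bookkeeping identifying $A_{cb}\left(H\right)$-approximability of $1_{H}$ with membership $1_{H}\in A_{cb}^{d}\left(H\right)$, and the alternative direct argument via lifting along $r_{H}$, are both sound but add nothing beyond what the paper leaves implicit.
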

\begin{proof}
It follows from Lemma \ref{lem:chiH_mult_AofG_char}\eqref{enu:Ext_AdofH_in_AdofG}
that $\chi_{H}=e_{H}\left(1_{H}\right)\in A_{cb}^{d}\left(G\right)$.
\end{proof}
Combining the results of this section, we obtain the following.
\begin{thm}
The characteristic function of a closed subgroup $H$ of a locally
compact group $G$ is $A_{cb}\left(G\right)$-approximable when either
of the following conditions is satisfied:
\begin{enumerate}
\item $G_{d}$ has the approximation property.
\item There is a bounded map $\Psi:VN\left(G\right)\rightarrow VN\left(G\right)$
such that $\Psi\left(\lambda\left(s\right)\right)=\chi_{H}\left(s\right)\lambda\left(s\right)$
for $s\in G$, which is satisfied if $\Psi$ is a natural or invariant
projection onto $VN_{H}\left(G\right)$, and $1_{H}$ is $A_{cb}\left(H\right)$-approximable,
which occurs when $H$ is weakly amenable or $H_{d}$ has the approximation
property.
\end{enumerate}
\end{thm}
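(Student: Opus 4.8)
The plan is to assemble this theorem purely from the results already developed in the section; it is a summary statement and requires no new ideas, only bookkeeping. First I would treat case (1). If $G_d$ has the approximation property, then Proposition \ref{pro:Approx_prop_implies_approximable} gives $M_{cb}A\left(G_{d}\right)=A_{cb}^{d}\left(G\right)$. Since, as observed in the paragraph following Proposition \ref{pro:chi_H_approximable}, $\chi_{H}$ is positive definite on $G_d$ and hence lies in $B\left(G_{d}\right)\subset M_{cb}A\left(G_{d}\right)$, we immediately get $\chi_{H}\in A_{cb}^{d}\left(G\right)$, which is precisely the assertion that $\chi_{H}$ is $A_{cb}\left(G\right)$-approximable. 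So case (1) is a two-line deduction.

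Next I would handle case (2). The hypothesis is that there is a bounded $\Psi:VN\left(G\right)\rightarrow VN\left(G\right)$ with $\Psi\left(\lambda\left(s\right)\right)=\chi_{H}\left(s\right)\lambda\left(s\right)$, together with the $A_{cb}\left(H\right)$-approximability of $1_{H}$. By the first part of Proposition \ref{pro:chi_H_approximable} (applied with $E=H$), the map $\Psi$ yields $\chi_{H}A\left(G\right)\subset A_{cb}^{d}\left(G\right)$. Then Lemma \ref{lem:Weaker_cond_for_chiH_approximable}, whose hypotheses are exactly ``$\chi_{H}A\left(G\right)\subset A_{cb}^{d}\left(G\right)$'' and ``$1_{H}$ is $A_{cb}\left(H\right)$-approximable,'' concludes that $\chi_{H}$ is $A_{cb}\left(G\right)$-approximable. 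That finishes the core implication.

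It then remains to justify the two parenthetical clarifications in the statement of case (2). For the first, I must check that a natural projection onto $VN_{H}\left(G\right)$, or an invariant projection onto $VN_{H}\left(G\right)$, furnishes such a $\Psi$. For a natural projection this is immediate from the definition of Lau and Ülger recalled just before Example \ref{exa:Inv_proj_are_natural}: the subset $E$ arising must be $H$, since $\Psi$ fixes $\lambda\left(h\right)$ for $h\in H$ and a projection onto $VN_{H}\left(G\right)$ kills the complement — indeed Example \ref{exa:Inv_proj_are_natural} shows precisely that an invariant projection onto $VN_{H}\left(G\right)$ is natural with $E=H$, so both cases reduce to the naturality already established. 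For the second parenthetical, I note that if $H$ is weakly amenable then $A_{cb}\left(H\right)$ has a bounded approximate identity, hence a $\Delta$-weak bounded approximate identity (reasoning as in Example \ref{exa:BAI_implies_approximable}), and as recorded in the discussion after Example \ref{exa:Inv_proj_are_natural} this makes $1_{H}$ $A_{cb}\left(H\right)$-approximable; the same conclusion holds if $H_d$ has the approximation property, now directly from Proposition \ref{pro:Approx_prop_implies_approximable} applied to $H$, which gives $1_{H}\in M_{cb}A\left(H_{d}\right)=A_{cb}^{d}\left(H\right)$.

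There is no real obstacle here — the theorem is a synthesis — but the one point demanding care is making sure the hypotheses of Proposition \ref{pro:chi_H_approximable} and Lemma \ref{lem:Weaker_cond_for_chiH_approximable} are invoked in the correct logical order: Proposition \ref{pro:chi_H_approximable}'s first conclusion ($\chi_{H}A\left(G\right)\subset A_{cb}^{d}\left(G\right)$) is exactly the standing hypothesis of Lemma \ref{lem:Weaker_cond_for_chiH_approximable}, and it is the replacement of ``$1_{G}$ is $A_{cb}\left(G\right)$-approximable'' by the weaker ``$1_{H}$ is $A_{cb}\left(H\right)$-approximable'' (valid by that lemma) that lets the weak-amenability or approximation-property hypothesis be imposed on $H$ rather than on $G$. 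I would also remark explicitly that $\chi_H \in M_{cb}A(G_d)$ is automatic from positive-definiteness, so it need not be listed as a separate hypothesis in case (2).
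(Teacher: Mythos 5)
Your proposal is correct and follows essentially the same route as the paper: case (1) via Proposition \ref{pro:Approx_prop_implies_approximable} together with $\chi_{H}\in M_{cb}A\left(G_{d}\right)$, and case (2) by chaining the first conclusion of Proposition \ref{pro:chi_H_approximable} into Lemma \ref{lem:Weaker_cond_for_chiH_approximable}, with the parentheticals handled by Example \ref{exa:Inv_proj_are_natural} and the remarks on $\Delta$-weak bounded approximate identities. The logical ordering you flag as the delicate point is exactly the one the paper uses.
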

\begin{proof}
(1) When $G_{d}$ has the approximation property, Proposition \ref{pro:Approx_prop_implies_approximable}
asserts that $A_{cb}^{d}\left(G\right)=M_{cb}A\left(G_{d}\right)$.
Since $M_{cb}A\left(G_{d}\right)$ contains the characteristic functions
of all subgroups of $G$, we have $\chi_{H}\in A_{cb}^{d}\left(G\right)$.

(2) If $\Psi:VN\left(G\right)\rightarrow VN_{H}\left(G\right)$ is
a bounded map such that $\Psi\left(\lambda\left(s\right)\right)=\chi_{H}\left(s\right)\lambda\left(s\right)$
for $s\in G$ and $1_{H}$ is $A_{cb}\left(H\right)$-approximable,
then $\chi_{H}A\left(G\right)\subset A_{cb}^{d}\left(G\right)$ by
Proposition \ref{pro:chi_H_approximable}. Lemma \ref{lem:Weaker_cond_for_chiH_approximable}
then asserts that $\chi_{H}$ is $A_{cb}\left(G\right)$-approximable.
Example \ref{exa:Inv_proj_are_natural} shows that invariant projections
$VN\left(G\right)\rightarrow VN_{H}\left(G\right)$ are natural, and
the latter are by definition bounded map satisfying the condition
of (2). It was noted above that weak amenability of $H$ implies $1_{H}\in A_{cb}^{d}\left(H\right)$,
while Proposition \ref{pro:Approx_prop_implies_approximable} implies
$1_{H}\in A_{cb}^{d}\left(H\right)$ when $H_{d}$ has the approximation
property.
\end{proof}
Theorem 3.7 of \cite{ARS} claims that a bounded approximate indicator
in $B\left(G\right)$ for a closed subgroup $H$ of the locally compact
group $G$ exists whenever $\chi_{H}$ is $B\left(G\right)$-approximable.
The argument establishing this result was found to contain an error
\cite{ARST} and it is not known whether the claim holds. The following
examples show that $\chi_{H}$ may be $M_{cb}A\left(G\right)$-approximable
even when no invariant projection onto $VN_{H}\left(G\right)$ exists,
in which case no bounded approximate indicator for $H$ exists, either,
by Proposition \ref{pro:Approx_ind_yield_inv_proj_AG}.
\begin{example}
The locally compact group $G=SL\left(2,\mathbb{R}\right)$ contains
$H=\mathbb{F}_{2}$ as a closed subgroup. It has recently been shown
that $G_{d}$ is weakly amenable \cite{KL}, so that $\chi_{H}$ is
$A_{cb}\left(G\right)$-approximable by Proposition \ref{pro:Approx_prop_implies_approximable}.
Since $G$ is connected, its group von Neumann algebra is injective
\cite[Corollary 6.9(c)]{Connes}, so there exists a completely bounded
projection $B\left(L^{2}\left(G\right)\right)\rightarrow VN\left(G\right)$.
If a completely bounded projection $VN\left(G\right)\rightarrow VN_{H}\left(G\right)$
existed, then composition would yield a completely bounded projection
$B\left(L^{2}\left(G\right)\right)\rightarrow VN_{H}\left(G\right)$,
implying that $VN_{H}\left(G\right)=VN\left(H\right)$ is an injective
von Neumann algebra \cite{CS}. It would follow that the discrete
group $H$ is amenable \cite[(2.35)]{Paterson}, which is false.

\begin{example}
Let $G=SL\left(2,\mathbb{R}\right)$ and consider the diagonal subgroup
$G_{\Delta}$ of $G\times G$. The Fourier algebra $A\left(G\right)$
is not operator biflat by Corollary 3.7 of \cite{CT}, meaning that
no invariant projection $VN\left(G\times G\right)\rightarrow VN_{G_{\Delta}}\left(G\times G\right)$
exists. But the weak amenability of $G_{d}$, noted in the preceding
example, implies the weak amenability of $\left(G\times G\right)_{d}$,
so that $\chi_{G_{\Delta}}$ is $A_{cb}\left(G\times G\right)$-approximable,
again by Proposition \ref{pro:Approx_prop_implies_approximable}.
\end{example}
\end{example}

\section{\label{sec:The_discr_sep_prop}The discretized $H$-separation property}

In this section, we characterize the approximability of the characteristic
function of a closed subgroup $H$ of a locally compact group $G$
in the spirit of the $H$-separation property of Kaniuth and Lau.
For a closed subgroup $H$ of $G$, let $P_{H}\left(G\right)$ denote
the norm closed convex set $\left\{ u\in P\left(G\right):u\left(H\right)=1\right\} $.
\begin{defn}
(\cite{KanLau}) A locally compact group $G$ is said to have the
\emph{$H$-separation property} for a closed subgroup $H$ if, for
each $s\in G\setminus H$, there exists $u\in P_{H}\left(G\right)$
such that $u\left(s\right)\neq1$.
\end{defn}
It is routine to verify that $G$ has the $H$-separation property
for any open, compact, or normal subgroup $H$, and it was shown by
Forrest \cite{Forrest3} that if $G$ is a SIN group, then $G$ has
the $H$-separation property for every closed subgroup $H$. In \cite{KanLau},
a fixed point argument is used to show that an invariant projection
$VN\left(G\right)\rightarrow VN_{H}\left(G\right)$ exists when the
locally compact group $G$ has the $H$-separation property (it is
noted in Proposition \ref{pro:Approx_ind_yield_inv_proj_AG} below
that the projections arising this way are completely positive, in
particular completely bounded). In fact, the following stronger result
holds.
\begin{prop}
Let $G$ be a locally compact group and $H$ a closed subgroup. Then
$G$ has the $H$-separation property if and only if there exists
a bounded approximate indicator for $H$ in $P_{H}\left(G\right)$.
\end{prop}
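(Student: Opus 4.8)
I would prove the two implications separately, the forward one carrying essentially all the weight.

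\emph{The backward implication} is quick. Suppose $\left(m_{\alpha}\right)_{\alpha}$ is a bounded approximate indicator for $H$ with every $m_{\alpha}\in P_{H}\left(G\right)$, and fix $s\in G\setminus H$. Using \cite[Lemme 3.2]{Eymard} I would choose $w\in I_{A\left(G\right)}\left(H\right)$ with $w\left(s\right)=1$, so that
\[
\left|m_{\alpha}\left(s\right)\right|\leq\left\Vert wm_{\alpha}\right\Vert _{L^{\infty}\left(G\right)}\leq\left\Vert wm_{\alpha}\right\Vert _{A\left(G\right)}\longrightarrow0
\]
by condition (b) of Definition \ref{def:Subgroup_approx_properties}. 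Hence $m_{\alpha}\left(s\right)\neq1$ for some $\alpha$, and since $m_{\alpha}\in P_{H}\left(G\right)$ this witnesses the $H$-separation property at $s$; note that only condition (b) is used.

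\emph{For the forward implication}, assume $G$ has the $H$-separation property; the first step is to normalise the separating functions. Given $s\in G\setminus H$, pick $u\in P_{H}\left(G\right)$ with $u\left(s\right)\neq1$ and set $v=\tfrac{1}{4}\left(2+u+\overline{u}\right)$. Because $P\left(G\right)$ is a cone, closed under pointwise products, and contains the constant function $1$ and the conjugate $\overline{u}$ — both identically $1$ on $H$ — one gets $v\in P_{H}\left(G\right)$; moreover $v$ is real-valued, $v\left(e\right)=1$, and $0\leq v\leq1$ on $G$ since $\left|u\right|\leq u\left(e\right)=1$, while $v\left(s\right)=1$ would force $u\left(s\right)+\overline{u\left(s\right)}=2$, hence $u\left(s\right)=1$, contrary to the choice of $u$. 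Thus for each $s\in G\setminus H$ there is $v_{s}\in P_{H}\left(G\right)$ with $0\leq v_{s}\leq1$ on $G$ and $v_{s}\left(s\right)<1$. Next I would again use closure of $P\left(G\right)$ under products to drive values to $0$: $v_{s}^{k}\in P_{H}\left(G\right)$, $0\leq v_{s}^{k}\leq1$, and $v_{s}^{k}\left(s\right)=v_{s}\left(s\right)^{k}\to0$, so for a given $\varepsilon>0$ there are $k_{s}\in\mathbb{N}$ and an open $V_{s}\ni s$ with $v_{s}^{k_{s}}<\varepsilon$ on $V_{s}$. Given a compact $K\subseteq G\setminus H$, cover it by finitely many $V_{s_{1}},\dots,V_{s_{N}}$ and put $m_{K,\varepsilon}=\prod_{j=1}^{N}v_{s_{j}}^{k_{s_{j}}}\in P_{H}\left(G\right)$; then $0\leq m_{K,\varepsilon}\leq1$, $m_{K,\varepsilon}\equiv1$ on $H$, and at each $t\in K$ some factor is below $\varepsilon$ while the rest are at most $1$, so $\sup_{K}m_{K,\varepsilon}\leq\varepsilon$. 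Directing the pairs $\left(K,\varepsilon\right)$ by inclusion in the first coordinate and reverse order in the second produces a net $\left(m_{K,\varepsilon}\right)$ in $P_{H}\left(G\right)$, bounded by $1$ in $B\left(G\right)$ (each $m_{K,\varepsilon}\in P\left(G\right)$ with $m_{K,\varepsilon}\left(e\right)=1$), which converges to $\chi_{H}$ pointwise on $G$ and uniformly on compact subsets of $G\setminus H$.

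The plan is then to finish by invoking the Granirer--Leinert theorem \cite[Theorem B2]{GL} — the tool highlighted in the introduction precisely for producing bounded approximate indicators in $B\left(G\right)$ from hypotheses weaker than Definition \ref{def:Subgroup_approx_properties}, available here since $H$ is a set of spectral synthesis for $A\left(G\right)$ — applied to the net just constructed, yielding a bounded approximate indicator for $H$; condition (a) of the definition is automatic because $r_{H}\left(m_{K,\varepsilon}\right)=1_{H}$. I expect this last step to be the crux: the constructed net only converges pointwise to $\chi_{H}$ (and uniformly to $0$ on compacta of $G\setminus H$), whereas condition (b) requires the far stronger $\left\Vert wm_{\alpha}\right\Vert _{A\left(G\right)}\to0$ for all $w\in I_{A\left(G\right)}\left(H\right)$, and pointwise — even uniform — smallness of $m_{\alpha}$ on $\mathrm{supp}\,w$ gives no control of the Fourier-algebra norm, nor does pointwise domination of the product $wm_{\alpha}$ by a norm-small element. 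It is the spectral synthesis of $H$, fed into \cite[Theorem B2]{GL}, that must close this gap, and verifying that its hypotheses are met by the net above (and that the conclusion can be kept within $P_{H}\left(G\right)$) is the one point I would want to check carefully.
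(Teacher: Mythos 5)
Your backward implication is correct and coincides with the paper's. The forward implication contains a genuine gap, and it sits exactly where you flagged it. The net $\left(m_{K,\varepsilon}\right)$ you construct is a bounded net in $P_{H}\left(G\right)$ converging pointwise to $\chi_{H}$ and uniformly to $0$ on compacta of $G\setminus H$; this establishes that $\chi_{H}$ is $B\left(G\right)$-approximable (essentially the content of Proposition \ref{pro:BofG_discretized_sep_prop}), but passing from such a net to condition (b) of Definition \ref{def:Subgroup_approx_properties} is precisely the step of \cite[Theorem 3.7]{ARS} that was retracted in \cite{ARST}, and the paper states explicitly that it is not known whether that implication holds. The appeal to \cite[Theorem B2]{GL} cannot close the gap: that theorem upgrades weak$^{*}$ convergence to multiplier convergence only under the ``unit sphere'' hypothesis, i.e.\ for a net converging to a limit $u\in B\left(G\right)$ with $\left\Vert u_{\alpha}\right\Vert _{B\left(G\right)}\leq\left\Vert u\right\Vert _{B\left(G\right)}$. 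Here the pointwise limit $\chi_{H}$ lies outside $B\left(G\right)$ unless $H$ is open, and for the products $wm_{K,\varepsilon}$ with $w\in I_{A\left(G\right)}\left(H\right)$ the limit is $0$, so the norm hypothesis is unattainable. Uniform smallness on compacta gives no control of $\left\Vert wm_{K,\varepsilon}\right\Vert _{A\left(G\right)}$; note that the paper's own averaging result, Proposition \ref{pro:Conditions_for_BAI}, requires the strictly stronger ``locally eventually zero'' condition on $G\setminus H$ for exactly this reason, and your net does not satisfy it. Nor does pointwise-plus-bounded convergence give $\sigma\left(A\left(G\right),VN\left(G\right)\right)$-convergence of $wm_{K,\varepsilon}$ to $0$, so a Mazur-type upgrade is not directly available from your construction either.

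The paper's proof of the forward direction takes a different route that avoids this obstruction entirely: it invokes the fixed-point construction of \cite[Proposition 3.1]{KanLau}, which produces an invariant projection $P:VN\left(G\right)\rightarrow VN_{H}\left(G\right)$ as a weak$^{*}$ operator topology limit of multiplication maps $M_{u_{\alpha}}$ with $u_{\alpha}\in P_{H}\left(G\right)$. From the projection property one reads off that $ur_{H}\left(u_{\alpha}\right)\rightarrow u$ \emph{weakly} in $A\left(H\right)$ and $wu_{\alpha}\rightarrow0$ \emph{weakly} in $A\left(G\right)$, and then convex combinations (Mazur's lemma) convert weak convergence into the norm convergence demanded by Definition \ref{def:Subgroup_approx_properties}, while remaining inside the convex set $P_{H}\left(G\right)$. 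If you want to repair your argument, that is the mechanism to reach for; as written, the final step of your forward implication does not go through.
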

\begin{proof}
Suppose that $G$ has the $H$-separation property. The proof of \cite[Proposition 3.1]{KanLau}
constructs an invariant projection $P:VN\left(G\right)\rightarrow VN_{H}\left(G\right)$
that is the weak$^{*}$ operator topology limit of a net $\left(M_{u_{\alpha}}\right)_{\alpha}$,
where $u_{\alpha}\in P_{H}\left(G\right)$ and $M_{u_{\alpha}}:VN\left(G\right)\rightarrow VN\left(G\right)$
is the adjoint of the multiplication map $u\mapsto u_{\alpha}u$ on
$A\left(G\right)$. Let $r_{H}:A\left(G\right)\rightarrow A\left(H\right)$
be the restriction map, which is a surjection satisfying $r_{H}^{*}\left(VN\left(H\right)\right)\subset VN_{H}\left(G\right)$.
Given $u\in A\left(H\right)$, let $\tilde{u}\in A\left(G\right)$
with $r_{H}\left(\tilde{u}\right)=u$, so that for $T\in VN\left(H\right)$,
\begin{eqnarray*}
\left\langle T,ur_{H}\left(u_{\alpha}\right)\right\rangle  & = & \left\langle T,r_{H}\left(\tilde{u}u_{\alpha}\right)\right\rangle \\
 & = & \left\langle M_{u_{\alpha}}\left(r_{H}^{*}\left(T\right)\right),\tilde{u}\right\rangle \\
 & \rightarrow & \left\langle P\left(r_{H}^{*}\left(T\right)\right),\tilde{u}\right\rangle \\
 & = & \left\langle r_{H}^{*}\left(T\right),\tilde{u}\right\rangle \\
 & = & \left\langle T,u\right\rangle .
\end{eqnarray*}
If $w\in I_{A\left(G\right)}\left(H\right)$, then
\begin{eqnarray*}
\left\langle T,wu_{\alpha}\right\rangle =\left\langle M_{u_{\alpha}}\left(T\right),w\right\rangle \rightarrow\left\langle P\left(T\right),w\right\rangle =0 &  & \left(T\in VN\left(G\right)\right)
\end{eqnarray*}
since $P\left(T\right)\in VN_{H}\left(G\right)=I_{A\left(G\right)}\left(H\right)^{\perp}$.
Therefore $ur_{H}\left(u_{\alpha}\right)\rightarrow u$ weakly in
$A\left(H\right)$ for all $u\in A\left(H\right)$ and $wu_{\alpha}\rightarrow0$
weakly in $A\left(G\right)$ for all $w\in I_{A\left(G\right)}\left(H\right)$.
Passing to convex combinations yields a bounded approximate indicator
for $H$ which remains in the convex set $P_{H}\left(G\right)$.

Conversely, if $\left(u_{\alpha}\right)_{\alpha}$ is a bounded approximate
indicator for $H$ in $P_{H}\left(G\right)$, then, given $s\in G\setminus H$,
choose $w\in I_{A\left(G\right)}\left(H\right)$ with $w\left(s\right)=1$,
in which case
\[
\left|u_{\alpha}\left(s\right)\right|=\left|u_{\alpha}\left(s\right)w\left(s\right)\right|\leq\left\Vert u_{\alpha}w\right\Vert _{L^{\infty}\left(G\right)}\leq\left\Vert u_{\alpha}w\right\Vert _{A\left(G\right)}\rightarrow0
\]
implies $u_{\alpha}\left(s\right)\neq1$ eventually.
\end{proof}
For a closed subgroup $H$ of a locally compact group $G$, we now
show that a weaker form of the $H$-separation property, replacing
the algebra $B\left(G\right)$ with $B^{d}\left(G\right)$, characterizes
when $\chi_{H}$ is $B\left(G\right)$-approximable.
\begin{defn}
Let $G$ be a locally compact group and $H$ a closed subgroup. The
group $G$ is said to have the \emph{discretized $H$-separation property}
if, for any $s\in G\setminus H$, there exists $u\in B^{d}\left(G\right)\cap P_{H}\left(G_{d}\right)$
such that $u\left(s\right)\neq1$.
\end{defn}
\begin{prop}
\label{pro:BofG_discretized_sep_prop}Let $G$ be a locally compact
group and $H$ a closed subgroup. Then $G$ has the discretized $H$-separation
property if and only if $\chi_{H}$ is $B\left(G\right)$-approximable.
\end{prop}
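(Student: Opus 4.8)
The plan is to prove the two implications separately, the reverse one being essentially immediate and the forward one being the substance. If $\chi_H$ is $B(G)$-approximable then by definition $\chi_H \in B^d(G)$; since for a subgroup $H$ the function $\chi_H$ is positive definite on $G_d$ and $\chi_H|_H \equiv 1$, we get $\chi_H \in B^d(G) \cap P_H(G_d)$, and as $\chi_H(s) = 0 \neq 1$ for every $s \in G \setminus H$, the single function $u = \chi_H$ already witnesses the discretized $H$-separation property at every point.

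For the forward direction I would construct a norm-bounded net in $B^d(G) \cap P_H(G_d)$ converging pointwise on $G$ to $\chi_H$; since pointwise and weak$^*$ convergence coincide on bounded subsets of $B(G_d)$ and $B^d(G)$ is weak$^*$ closed, this forces $\chi_H \in B^d(G)$. Two structural facts are used throughout: first, $B^d(G)$, being the weak$^*$ closure of the subalgebra $B(G)$ and multiplication in $B(G_d)$ being separately weak$^*$ continuous, is itself a subalgebra containing $1_G$; second, both $B^d(G)$ and the convex set $P_H(G_d)$ are stable under pointwise products (products of positive definite functions being positive definite). Now, given $s \in G \setminus H$, the hypothesis supplies $u_s \in B^d(G) \cap P_H(G_d)$ with $u_s(s) \neq 1$, and I would pass to $v_s = \frac{1}{2}(1_G + u_s)$, which again lies in $B^d(G) \cap P_H(G_d)$ and, crucially, satisfies $|v_s(s)| < 1$ \emph{strictly}: this uses only the elementary fact that $|1 + z| < 2$ whenever $|z| \leq 1$ and $z \neq 1$, together with $|u_s(s)| \leq u_s(e) = 1$. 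Obtaining this strict inequality is the one genuine point, since the separation hypothesis on its own permits $|u_s(s)| = 1$, and without strictness the subsequent power trick fails.

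The rest is bookkeeping. For a finite nonempty $F \subseteq G \setminus H$, set $v_F = \prod_{s \in F} v_s \in B^d(G) \cap P_H(G_d)$; since $|v_t(s)| \leq v_t(e) = 1$ for all $t$ while $|v_s(s)| < 1$, the quantity $c_F := \max_{s \in F} |v_F(s)|$ is $< 1$. The powers $v_F^{\,n}$ stay in $B^d(G) \cap P_H(G_d)$, are bounded by $1$ in $B(G_d)$, equal $1$ on $H$, and obey $|v_F^{\,n}(s)| \leq c_F^{\,n}$ for $s \in F$. Indexing by pairs $(F, \varepsilon)$ with $F$ a finite subset of $G \setminus H$ (empty product read as $1_G$) and $\varepsilon > 0$, directed by inclusion of $F$ and reverse order on $\varepsilon$, and choosing for each index an exponent $n$ with $c_F^{\,n} < \varepsilon$, yields a norm-bounded net in $B^d(G) \cap P_H(G_d)$ that is identically $1$ on $H$ and eventually within $\varepsilon$ of $0$ at any prescribed point of $G \setminus H$, hence converges pointwise to $\chi_H$. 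As noted, the only step I expect to require care is the strict estimate $|v_s(s)| < 1$; the algebraic stability properties of $B^d(G)$ and $P_H(G_d)$ and the directed-set argument are routine.
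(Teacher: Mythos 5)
Your proof is correct and follows essentially the same route as the paper: both hinge on replacing $u_{s}$ by $\frac{1}{2}\left(1_{G}+u_{s}\right)$ to obtain the strict bound $\left|u_{s}\left(s\right)\right|<1$, then use powers to drive the value at $s$ to zero and finite products to handle all points of $G\setminus H$, with the reverse implication being immediate since $\chi_{H}\in P_{H}\left(G_{d}\right)$. The only (immaterial) difference is bookkeeping: the paper extracts a weak$^{*}$ cluster point $u_{s}^{0}$ of the powers so that $u_{s}^{0}\left(s\right)=0$ exactly before forming products, whereas you form products first and control the error with an $\left(F,\varepsilon\right)$-indexed net.
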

\begin{proof}
Suppose that $G$ has the discretized $H$-separation property and
for each $s\in G\setminus H$ let $u_{s}\in B^{d}\left(G\right)\cap P_{H}\left(G_{d}\right)$
with $u_{s}\left(s\right)\neq1$. Replacing $u_{s}$ by $\frac{1}{2}\left(1_{G}+u_{s}\right)$,
which remains in $B^{d}\left(G\right)\cap P_{H}\left(G_{d}\right)$,
we may assume that $\left|u_{s}\left(s\right)\right|<1$. Then the
sequence $\left(u_{s}^{n}\right)_{n\geq1}$ is in $B^{d}\left(G\right)\cap P_{H}\left(G_{d}\right)$
with $\left\Vert u_{s}^{n}\right\Vert _{B\left(G_{d}\right)}=u_{s}^{n}\left(e\right)=1$
and thus has a weak$^{*}$ cluster point $u_{s}^{0}$ in the unit
ball of $B^{d}\left(G\right)$. Then $\left.u_{s}^{0}\right|_{H}=1$
and $\left|u_{s}^{0}\left(s\right)\right|\leq\limsup_{n}\left|u_{s}\left(s\right)\right|^{n}=0$,
so $u_{s}^{0}\left(s\right)=0$. Let $\mathcal{F}$ be the collection
of finite subsets of $G$ and for each $F\in\mathcal{F}$ let $u_{F}=\prod_{s\in F}u_{s}^{0}$.
Ordering $\mathcal{F}$ by inclusion, we have $\left.u_{F}\right|_{H}=1$
and $u_{F}\left(s\right)=0$ eventually for each $s\in G\setminus H$,
so that $u_{F}\overset{ptw}{\rightarrow}\chi_{H}$ and by boundedness
$u_{F}\overset{w^{*}}{\rightarrow}\chi_{H}$ in $B\left(G_{d}\right)$,
whence $\chi_{H}\in B^{d}\left(G\right)$. The converse is clear,
given that the characteristic function of a subgroup is always in
$P_{H}\left(G_{d}\right)$.
\end{proof}
When the locally compact group $G$ is second countable, the $H$-separation
property may also be characterized in terms of a single function on
$G$.
\begin{thm}
Let $G$ be a second countable locally compact group. For a closed
subgroup $H$, the following are equivalent:
\begin{enumerate}
\item $G$ has the $H$-separation property.
\item There is $u\in B\left(G\right)$ of norm one with $\left\{ s\in G:u\left(s\right)=1\right\} =H$.
\item There is $u\in P\left(G\right)$ with $\left\{ s\in G:u\left(s\right)=1\right\} =H$.
\end{enumerate}
\end{thm}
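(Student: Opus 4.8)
The plan is to prove the cycle $(3)\Rightarrow(2)\Rightarrow(1)\Rightarrow(3)$, with second countability entering only in the last implication. The step $(3)\Rightarrow(2)$ is immediate: if $u\in P(G)$ has $\left\{s\in G:u\left(s\right)=1\right\}=H$, then $e\in H$ forces $u\left(e\right)=1$, so $\left\Vert u\right\Vert_{B\left(G\right)}=\left\Vert u\right\Vert_{L^{\infty}\left(G\right)}=u\left(e\right)=1$ and $u$ itself witnesses $(2)$.

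For $(2)\Rightarrow(1)$ the key observation is that a norm-one $u\in B\left(G\right)$ with $u\left(e\right)=1$ is automatically positive definite. Given $\varepsilon>0$, write $u=\left\langle \pi\left(\cdot\right)\xi|\eta\right\rangle$ with $\left\Vert \xi\right\Vert=\left\Vert \eta\right\Vert$ (rescaling $\xi\mapsto c\xi$, $\eta\mapsto c^{-1}\eta$) and $\left\Vert \xi\right\Vert\left\Vert \eta\right\Vert<1+\varepsilon$; since $\left\Vert \xi\right\Vert^{2}\geq\left|\left\langle \xi|\eta\right\rangle\right|=\left|u\left(e\right)\right|=1$, the identity $\left\Vert \xi-\eta\right\Vert^{2}=2\left\Vert \xi\right\Vert^{2}-2\operatorname{Re}\left\langle \xi|\eta\right\rangle=2\left\Vert \xi\right\Vert^{2}-2<2\varepsilon$ shows $u$ lies within $B\left(G\right)$-distance $\left\Vert \xi\right\Vert\left\Vert \eta-\xi\right\Vert<\sqrt{\left(1+\varepsilon\right)2\varepsilon}$ of the positive definite function $\left\langle \pi\left(\cdot\right)\xi|\xi\right\rangle$; as $P\left(G\right)$ is norm closed in $B\left(G\right)$ we conclude $u\in P\left(G\right)$. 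If now $u$ is as in $(2)$, this gives $u\in P_{H}\left(G\right)$, and since $u\left(s\right)\neq1$ for every $s\in G\setminus H$ by hypothesis, $G$ has the $H$-separation property.

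For $(1)\Rightarrow(3)$, the substantial step, I would use second countability via the Lindel\"of property. For each $s\in G\setminus H$ the $H$-separation property furnishes $u_{s}\in P_{H}\left(G\right)$ with $u_{s}\left(s\right)\neq1$; by continuity $U_{s}:=\left\{t:u_{s}\left(t\right)\neq1\right\}$ is an open neighbourhood of $s$, and the $U_{s}$ cover the second countable, hence Lindel\"of, subspace $G\setminus H$, so a countable subcollection $\left(U_{s_{n}}\right)_{n\geq1}$ already covers it. Thus for every $s\in G\setminus H$ there is $n$ with $u_{s_{n}}\left(s\right)\neq1$. Put $u=\sum_{n\geq1}2^{-n}u_{s_{n}}$: the partial sums are positive definite and Cauchy in $B\left(G\right)$ (their tails are dominated by $\sum 2^{-n}=1$), so $u\in P\left(G\right)$ with $u\left(e\right)=1$ and $u\equiv1$ on $H$. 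Finally, if $u\left(s\right)=1$, write $a_{n}=u_{s_{n}}\left(s\right)$; then $\left|a_{n}\right|\leq\left\Vert u_{s_{n}}\right\Vert_{\infty}=u_{s_{n}}\left(e\right)=1$ and $\sum 2^{-n}a_{n}=1$, so taking real parts and using $\operatorname{Re}a_{n}\leq1$ forces $\operatorname{Re}a_{n}=1$, hence $a_{n}=1$, for all $n$; this is impossible unless $s\in H$. Therefore $\left\{s:u\left(s\right)=1\right\}=H$, and $u$ witnesses $(3)$.

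The heart of the argument is the Lindel\"of reduction together with the equality-in-the-triangle-inequality (strict convexity of $P\left(G\right)$) computation in $(1)\Rightarrow(3)$; the only other delicate point is the automatic positive definiteness of norm-one elements of $B\left(G\right)$ taking the value $1$ at $e$, which is standard but must be recorded since the Preliminaries only quote the converse implication $\left\Vert u\right\Vert_{B\left(G\right)}=u\left(e\right)$ for $u\in P\left(G\right)$.
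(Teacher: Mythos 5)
Your proposal is correct, and the substantial implication is handled exactly as in the paper: for $(1)\Rightarrow(3)$ you extract a countable subfamily $\left(u_{s_{n}}\right)_{n}$ covering $G\setminus H$ by the sets where $u_{s_{n}}\neq1$ (you invoke the Lindel\"of property where the paper invokes $\sigma$-compactness of the open set $G\setminus H$ --- the same use of second countability) and then form $u=\sum_{n\geq1}2^{-n}u_{s_{n}}$, concluding with the same $\operatorname{Re}u_{s_{n}}\left(s\right)<1$ argument. Where you genuinely diverge is in the remaining easy leg of the cycle. The paper proves $(1)\Rightarrow(2)\Rightarrow(3)\Rightarrow(1)$ and gets $(2)\Rightarrow(3)$ from the Jordan decomposition quoted in the Preliminaries: replacing $u$ by $\frac{1}{2}\left(u+u^{*}\right)$ and writing $u=u^{+}-u^{-}$ with $\left\Vert u\right\Vert _{B\left(G\right)}=\left\Vert u^{+}\right\Vert _{B\left(G\right)}+\left\Vert u^{-}\right\Vert _{B\left(G\right)}$, the norm-one condition forces $u^{-}\left(e\right)=0$, hence $u^{-}=0$. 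You instead prove $(2)\Rightarrow(1)$ by showing directly that a norm-one $u\in B\left(G\right)$ with $u\left(e\right)=1$ is positive definite, via nearly-optimal coefficient representations with $\left\Vert \xi\right\Vert =\left\Vert \eta\right\Vert $ and the estimate $\left\Vert \xi-\eta\right\Vert ^{2}=2\left\Vert \xi\right\Vert ^{2}-2<2\varepsilon$, then norm-closedness of $P\left(G\right)$. Both routes are establishing the same standard fact (the norm-one elements of $B\left(G\right)$ with value $1$ at $e$ are the states of $C^{*}\left(G\right)$); the paper's version leans on functional-analytic structure already recorded in the Preliminaries and is shorter, while yours is self-contained at the Hilbert-space level and, as you note, actually yields $(2)\Rightarrow(3)$ directly. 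Your reorganization of the cycle is harmless, and all the estimates you use ($\left\Vert \left\langle \pi\left(\cdot\right)\xi|\eta-\xi\right\rangle \right\Vert _{B\left(G\right)}\leq\left\Vert \xi\right\Vert \left\Vert \eta-\xi\right\Vert $, norm-closedness of $P\left(G\right)$, closure of $P\left(G\right)$ under convergent sums) are valid.
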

\begin{proof}
(1) implies (2): For $s\in G\setminus H$, let $u_{s}\in P_{H}\left(G\right)$
with $u_{s}\left(s\right)\neq1$ and choose an open neighborhood $U_{s}$
of $s$ with $1\notin u_{s}\left(U_{s}\right)$. Then $\left(U_{s}\right)_{s\in G\setminus H}$
is an open cover of $G\setminus H$, so has a countable subcover $\left(U_{s_{n}}\right)_{n\geq1}$
by $\sigma$-compactness of the open set $G\setminus H$ in $G$.
The function $u=\sum_{n\geq1}2^{-n}u_{s_{n}}$ is in the norm closed
convex set $P_{H}\left(G\right)$, so that $\left\Vert u\right\Vert _{B\left(G\right)}=u\left(e\right)=1$.
Given $s\in G\setminus H$, choose $n$ such that $s\in U_{s_{n}}$,
so $u_{s_{n}}\left(s\right)\neq1$. Since $\left\Vert u_{s_{n}}\right\Vert _{L^{\infty}\left(G\right)}=1$,
we have $\text{Re}u_{s_{n}}\left(s\right)<1$, implying that $\text{Re}u\left(s\right)=\sum_{n\geq1}2^{-n}\text{Re}u_{s_{n}}\left(s\right)<1$
and hence that $u\left(s\right)\neq1$.

(2) implies (3): Replacing $u$ by $\frac{1}{2}\left(u+u^{*}\right)$,
where $u^{*}\left(s\right)=\overline{u\left(s^{-1}\right)}$, we obtain
function of $B\left(G\right)$-norm one (because the $B\left(G\right)$
norm dominates the $L^{\infty}\left(G\right)$ norm) for which we
may write $u=u^{+}-u^{-}$ with $u^{\pm}\in P\left(G\right)$ satisfying
$\left\Vert u\right\Vert _{B\left(G\right)}=\left\Vert u^{+}\right\Vert _{B\left(G\right)}+\left\Vert u^{-}\right\Vert _{B\left(G\right)}$.
Then
\[
u^{+}\left(e\right)-u^{-}\left(e\right)=u\left(e\right)=1=\left\Vert u\right\Vert _{B\left(G\right)}=\left\Vert u^{+}\right\Vert _{B\left(G\right)}+\left\Vert u^{-}\right\Vert _{B\left(G\right)}=u^{+}\left(e\right)+u^{-}\left(e\right)
\]
and consequently $\left\Vert u^{-}\right\Vert _{B\left(G\right)}=u^{-}\left(e\right)=0$,
so that $u=u^{+}\in P_{H}\left(G\right)$.

(3) implies (1): This is clear.
\end{proof}
The amenability of $H$ is known to imply the existence of an invariant
projection $VN\left(G\right)\rightarrow VN_{H}\left(G\right)$ for
any locally compact group $G$ containing $H$ as a closed subgroup
\cite[Corollary 3.7]{Crann} (see also \cite{Derighetti}). We now
show that the former condition may be characterized in terms of a
separation property relative to any such $G$.
\begin{thm}
\label{thm:H_amen_iff_chiH_approx}A locally compact group $H$ is
amenable if and only if $\chi_{H}$ is $A\left(G\right)$-approximable
for some (equivalently, any) locally compact group $G$ containing
$H$ as a closed subgroup.
\end{thm}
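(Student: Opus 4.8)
The plan is to prove both implications, with the forward one established for an \emph{arbitrary} locally compact group $G$ containing $H$ as a closed subgroup; since $H$ is itself such a group, the two implications then yield the ``some (equivalently, any)'' form. The device that ties everything together is, for any locally compact group $G$, the $\ast$-homomorphism $\rho_G:C^*(G_d)\to VN(G)$ obtained by integrating the unitary representation $\lambda_G$ of the discrete group $G_d$, so that $\rho_G(\delta_s)=\lambda_G(s)$. Its adjoint $\rho_G^*:A(G)^{**}=VN(G)^*\to C^*(G_d)^*=B(G_d)$ is weak$^*$ continuous, and since $\langle\rho_G^*(\kappa_A(u)),\delta_s\rangle=u(s)$ for $u\in A(G)$, $s\in G$ (with $\kappa_A$ the canonical embedding of $A(G)$ into its bidual), it extends the inclusion $A(G)\subset B(G_d)$. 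Goldstine's theorem together with the weak$^*$ closedness of $A^d(G)$ gives $\rho_G^*(A(G)^{**})\subseteq A^d(G)$, while the closed range theorem --- applicable because a $\ast$-homomorphism of $C^*$-algebras has norm closed range --- gives $\rho_G^*(A(G)^{**})=(\ker\rho_G)^\perp$; this annihilator is weak$^*$ closed and contains $A(G)$, so it coincides with $A^d(G)$. Thus the canonical map $A(G)^{**}\to A^d(G)$ is onto, and a function $m$ on $G$ is $A(G)$-approximable precisely when there is a bounded functional $\omega$ on $VN(G)$ with $\omega(\lambda_G(s))=m(s)$ for all $s$; in that case Goldstine applied to a ball containing $\omega$ even produces a bounded net in $A(G)$ converging pointwise --- hence, on bounded sets, weak$^*$ in $B(G_d)$ --- to $m$.

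For the forward direction, assume $H$ is amenable and let $H\leq G$ be a closed subgroup. By Leptin's theorem $A(H)$ has a bounded approximate identity $(e_\beta)_\beta$; testing against $u\in A(H)$ with $u(h)=1$ gives $|e_\beta(h)-1|\leq\|e_\beta u-u\|_{A(H)}\to 0$, so $e_\beta\to 1_H$ pointwise and any weak$^*$ cluster point $\psi$ of $(e_\beta)_\beta$ in $A(H)^{**}=VN(H)^*$ satisfies $\psi(\lambda_H(h))=1$ for all $h\in H$. Transporting $\psi$ along the normal $\ast$-isomorphism $VN(H)\cong VN_H(G)$ yields $\tilde\psi\in VN_H(G)^*$ with $\tilde\psi(\lambda_G(h))=1$ for $h\in H$. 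By \cite[Corollary 3.7]{Crann} the amenability of $H$ supplies an invariant projection $P:VN(G)\to VN_H(G)$, which by Example \ref{exa:Inv_proj_are_natural} satisfies $P(\lambda_G(s))=\chi_H(s)\lambda_G(s)$. Hence $\omega:=\tilde\psi\circ P$ is a bounded functional on $VN(G)$ with $\omega(\lambda_G(s))=\chi_H(s)$ for every $s$ (for $s=h\in H$ this is $\tilde\psi(\lambda_G(h))=1$, and for $s\notin H$ it is $\tilde\psi(0)=0$), so the first paragraph gives $\chi_H\in A^d(G)$, i.e.\ $\chi_H$ is $A(G)$-approximable.

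For the converse, suppose $\chi_H$ is $A(G)$-approximable for some closed subgroup $H\leq G$, so $\chi_H\in A^d(G)$. The restriction $r_H:B(G_d)\to B(H_d)$ is weak$^*$ continuous --- it is the adjoint of the $\ast$-homomorphism $C^*(H_d)\to C^*(G_d)$ induced by $\ell^1(H_d)\subset\ell^1(G_d)$, in parallel with Lemma \ref{lem:Restr_cont} --- and $r_H(A(G))=A(H)$ by Herz's restriction theorem, so $1_H=r_H(\chi_H)\in\overline{r_H(A(G))}^{w^*}=A^d(H)$. By the surjectivity of $\rho_H^*:A(H)^{**}\to A^d(H)$ from the first paragraph, write $1_H=\rho_H^*(\omega)$ with $\omega\in VN(H)^*$; Goldstine then gives a net $(u_\alpha)_\alpha$ in a fixed ball of $A(H)$ with $\kappa_A(u_\alpha)\to\omega$ weak$^*$, so $u_\alpha(h)=\langle\kappa_A(u_\alpha),\lambda_H(h)\rangle$ converges to $\langle\rho_H^*(\omega),\delta_h\rangle=1_H(h)=1$ for each $h\in H$. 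Hence $(u_\alpha)_\alpha$ is a $\Delta$-weak bounded approximate identity for $A(H)$, and $H$ is amenable by \cite[Theorem 5.1]{K=0000DC}. Finally, since $H$ is a closed subgroup of itself the class of admissible $G$ is nonempty, and the two implications combine to give the ``some (equivalently, any)'' statement.

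The step I expect to cost the most is the passage, used in the converse, from ``$1_H$ lies in the weak$^*$ closure of $A(H)$ in $B(H_d)$'' to ``$1_H$ is a weak$^*$ limit of a \emph{bounded} net in $A(H)$''. This is not formal: the weak$^*$ closure of a subspace of a dual space can be strictly larger than the union of the weak$^*$ closures of its balls. That is exactly why the first paragraph establishes the \emph{full} surjectivity of the canonical map $A(H)^{**}\to A^d(H)$ by invoking the closed range theorem for $\rho_H$ --- exploiting that a $\ast$-homomorphism of $C^*$-algebras has norm closed range --- rather than settling for the routine inclusion $\rho_H^*(A(H)^{**})\subseteq A^d(H)$.
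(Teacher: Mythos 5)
Your proof is correct, and its skeleton matches the paper's: the forward direction combines a bounded approximate identity for $A\left(H\right)$ with the invariant projection supplied by \cite[Corollary 3.7]{Crann} and Example \ref{exa:Inv_proj_are_natural}, and the converse hinges on the weak$^{*}$ continuity of restriction together with the surjectivity of the canonical map $A\left(H\right)^{**}\rightarrow A^{d}\left(H\right)$ --- you correctly identify this surjectivity as the non-formal crux. The two genuine differences lie in how that crux and the endgame are handled. For the surjectivity, the paper factors the $\ast$-homomorphism $C^{*}\left(H_{d}\right)\rightarrow VN\left(H\right)$ through $C^{*}\left(H_{d}\right)/A^{d}\left(H\right)_{\perp}$ and uses that an injective $\ast$-homomorphism is isometric, so that the adjoint is a quotient map; you instead apply the closed range theorem to the same $\ast$-homomorphism, using that its range is a $C^{*}$-subalgebra. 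These are two packagings of the same $C^{*}$-algebraic fact and both are sound. More substantively, once you have a bounded net in $A\left(H\right)$ converging pointwise to $1_{H}$, you conclude amenability by citing \cite[Theorem 5.1]{K=0000DC} on $\Delta$-weak bounded approximate identities, whereas the paper continues by hand: it symmetrizes, performs the Jordan decomposition $v_{\alpha}=v_{\alpha}^{+}-v_{\alpha}^{-}$ with control on $v_{\alpha}^{-}\left(e\right)$, extracts a state $\omega$ on $VN\left(H\right)$ with $\omega\left(\lambda_{H}\left(s\right)\right)=1$ for all $s\in H$, and invokes Bekka's theory of amenable representations \cite{Bekka}. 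Your shortcut is legitimate --- the paper itself uses exactly this reduction in Section \ref{sec:Approximability_of_char_funs} when observing that $B\left(G\right)$-approximability of $1_{G}$ forces amenability of $G$ --- and it shortens the argument considerably; what the paper's longer route buys is a self-contained construction of the invariant state rather than an appeal to an external characterization of amenability via $\Delta$-weak bounded approximate identities.
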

\begin{proof}
Fix a locally compact group $G$ that contains $H$ as a closed subgroup.

Suppose that $H$ is amenable. Let $\left(e_{\alpha}\right)_{\alpha}$
be a bounded approximate identity for $A\left(H\right)$ and let $\Psi:VN\left(G\right)\rightarrow VN_{H}\left(G\right)$
be an an invariant projection. Example \ref{exa:Inv_proj_are_natural}
shows that $\Psi\left(\lambda_{G}\left(s\right)\right)=\chi_{H}\left(s\right)\lambda_{G}\left(s\right)$
for all $s\in G$ and the argument of Example \ref{exa:BAI_implies_approximable}
shows that $e_{\alpha}\overset{ptw}{\rightarrow}1_{H}$. Recall that
the adjoint of the restriction $r_{H}:A\left(G\right)\rightarrow A\left(H\right)$
is a $\ast$-isomorphism $\tau=r_{H}^{*}:VN\left(H\right)\rightarrow VN_{H}\left(G\right)$
taking $\lambda_{H}\left(s\right)$ to $\lambda_{G}\left(s\right)$
for all $s\in H$. The composition 
\[
VN\left(H\right)^{*}\overset{\left(\tau^{*}\right)^{-1}}{\rightarrow}VN_{H}\left(G\right)^{*}\overset{\Psi^{*}}{\rightarrow}VN\left(G\right)^{*}
\]
 satisfies, for $s\in G$,
\begin{eqnarray*}
\left\langle \Psi^{*}\left(\tau^{*}\right)^{-1}\left(e_{\alpha}\right),\lambda_{G}\left(s\right)\right\rangle  & = & \left\langle \left(\tau^{*}\right)^{-1}\left(e_{\alpha}\right),\chi_{H}\left(s\right)\lambda_{G}\left(s\right)\right\rangle \\
 & = & \left\langle e_{\alpha},\tau^{-1}\left(\chi_{H}\left(s\right)\lambda_{G}\left(s\right)\right)\right\rangle \\
 & = & \begin{cases}
\left\langle e_{\alpha},\lambda_{H}\left(s\right)\right\rangle , & s\in H\\
0, & s\notin H
\end{cases}\\
 & \rightarrow & \chi_{H}\left(s\right).
\end{eqnarray*}
Let $E$ be a weak$^{*}$ cluster point of the bounded net $\left(\Psi^{*}\left(\tau^{*}\right)^{-1}\left(e_{\alpha}\right)\right)_{\alpha}$
in $VN\left(G\right)^{*}$, so that $\left\langle E,\lambda_{G}\left(s\right)\right\rangle =\chi_{H}\left(s\right)$
for all $s\in G$ by the above computation. Letting $\left(u_{\alpha}\right)_{\alpha}$
be a bounded net in $A\left(G\right)$ converging weak$^{*}$ to $E$,
we have $u_{\alpha}\overset{ptw}{\rightarrow}\chi_{H}$ and therefore
$u_{\alpha}\overset{w^{*}}{\rightarrow}\chi_{H}$ in $B\left(G_{d}\right)$
by boundedness.

Conversely, suppose that $\left(u_{\alpha}\right)_{\alpha}$ is a
net in $A\left(G\right)$ such that $u_{\alpha}\overset{w^{*}}{\rightarrow}\chi_{H}$
in $B\left(G_{d}\right)$. Analogous to the proof of Lemma \ref{lem:Restr_cont},
the restriction $r_{H}:B\left(G_{d}\right)\rightarrow B\left(H_{d}\right)$
is weak$^{*}$ continuous, so that $r_{H}\left(u_{\alpha}\right)\overset{w^{*}}{\rightarrow}1_{H}$
and $1_{H}$ is in the weak$^{*}$ closure of $A\left(H\right)$ in
$B\left(H_{d}\right)$. Viewing $\lambda_{H}$ as a representation
of $H_{d}$, the universal property of $C^{*}\left(H_{d}\right)$
yields a quotient $\ast$-homomorphism $C^{*}\left(H_{d}\right)\rightarrow C_{\delta}^{*}\left(H\right):\omega_{H_{d}}\left(s\right)\mapsto\lambda_{H}\left(s\right)$,
where $\omega_{H_{d}}$ denotes the universal representation of $H_{d}$
and $C_{\delta}^{*}\left(H\right)$ the $C^{*}$-algebra generated
by $\lambda_{H}\left(H\right)$ in $B\left(L^{2}\left(H\right)\right)$,
a subalgebra of $VN\left(H\right)$. Composing with the inclusion,
we obtain a $\ast$-homomorphism $\Psi:C^{*}\left(H_{d}\right)\rightarrow VN\left(H\right)$.
The adjoint $\Psi^{*}:A\left(H\right)^{**}\rightarrow B\left(H_{d}\right)$
is the weak$^{*}$ continuous extension of the inclusion $A\left(H\right)\subset B\left(H_{d}\right)$,
so that $\ker\Psi=A\left(H\right)_{\perp}$ (preannihilator taken
with respect to the $B\left(H_{d}\right)$\textendash $C^{*}\left(H_{d}\right)$
duality). Then $A\left(H\right)_{\perp}=A^{d}\left(H\right)_{\perp}$
is an ideal in $C^{*}\left(H_{d}\right)$ and $\Psi$ drops to an
injective $\ast$-homomorphism $C^{*}\left(H_{d}\right)/A^{d}\left(H\right)_{\perp}\rightarrow VN\left(H\right)$.
This injective $\ast$-homomorphism is isometric, so that its adjoint
$A\left(H\right)^{**}\rightarrow A^{d}\left(H\right)$ is a quotient
map and given $\epsilon>0$, the function $1_{H}$ is the weak$^{*}$
limit in $B\left(H_{d}\right)$ of a net $\left(v_{\alpha}\right)_{\alpha}$
in $A\left(H\right)$ of bound $1+\epsilon$. The adjoint on $B\left(H_{d}\right)$
is a weak$^{*}$ continuous isometry preserving $A\left(H\right)$,
so that we may replace $v_{\alpha}$ by $\frac{1}{2}\left(v_{\alpha}+v_{\alpha}^{*}\right)$
and assume that $v_{\alpha}=v_{\alpha}^{+}-v_{\alpha}^{-}$ for $v_{\alpha}^{\pm}\in P\left(H\right)\cap A\left(H\right)$
with $\left\Vert v_{\alpha}\right\Vert _{B\left(H\right)}=\left\Vert v_{\alpha}^{+}\right\Vert _{B\left(H\right)}+\left\Vert v_{\alpha}^{-}\right\Vert _{B\left(H\right)}$.
Passing to subnets, let $v^{\pm}$ be weak$^{*}$ limits of $\left(v_{\alpha}^{\pm}\right)_{\alpha}$
in $B\left(H_{d}\right)$. Then
\[
1=\lim_{\alpha}v_{\alpha}\left(e\right)=\lim_{\alpha}v_{\alpha}^{+}\left(e\right)-\lim_{\alpha}v_{\alpha}^{-}\left(e\right)=v^{+}\left(e\right)-v^{-}\left(e\right)
\]
and so
\[
v^{-}\left(e\right)=v^{+}\left(e\right)-1=\lim_{\alpha}v_{\alpha}^{+}\left(e\right)-1=\lim_{\alpha}\left\Vert v_{\alpha}^{+}\right\Vert _{B\left(H\right)}-1\leq\epsilon.
\]
Let $\omega_{\epsilon}$ be a weak$^{*}$ cluster point of $\left(v_{\alpha}^{+}\right)_{\alpha}$
in $A\left(H\right)^{**}$, so that $\omega_{\epsilon}$ is a state
on $VN\left(H\right)$ satisfying, for $s\in H$,
\begin{eqnarray*}
\left|\left\langle \omega_{\epsilon},\lambda_{H}\left(s\right)\right\rangle -1\right| & = & \left|\lim_{\alpha}v_{\alpha}^{+}\left(s\right)-\lim_{\alpha}v_{\alpha}\left(s\right)\right|\\
 & = & \lim_{\alpha}\left|v_{\alpha}^{-}\left(s\right)\right|\leq\lim_{\alpha}v_{\alpha}^{-}\left(e\right)=v^{-}\left(e\right)\leq\epsilon.
\end{eqnarray*}
Letting $\omega$ be a weak$^{*}$ cluster point of the states $\left(\omega_{\epsilon}\right)_{\epsilon>0}$
on $VN\left(H\right)$, we have $\left\langle \omega,\lambda_{H}\left(s\right)\right\rangle =1$
for all $s\in H$, and any extension of $\omega$ to a state on $B\left(L^{2}\left(H\right)\right)$
still takes the value $1$ on the unitaries $\lambda_{H}\left(s\right)$
for $s\in H$. The amenability of $H$ follows: by a Cauchy-Schwarz
argument, any extension of $\omega$ to a state on $B\left(L^{2}\left(H\right)\right)$
is invariant under the conjugation action of the unitaries $\lambda_{H}\left(s\right)$
for $s\in H$, whence $\lambda_{H}$ is an amenable representation
of $H$ (see \cite{Bekka}).
\end{proof}
\begin{cor}
Let $G$ be a locally compact group and $H$ a closed subgroup. Then
$H$ is amenable if and only if $G$ has the discretized $H$-separation
property witnessed by functions in $A^{d}\left(G\right)\cap P_{H}\left(G_{d}\right)$.
\end{cor}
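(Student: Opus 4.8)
The plan is to read off both implications from Theorem~\ref{thm:H_amen_iff_chiH_approx}, using that $A^{d}\left(G\right)$ is by definition the weak$^{*}$ closure of $A\left(G\right)$ in $B\left(G_{d}\right)$ and that the characteristic function $\chi_{H}$ always lies in $P_{H}\left(G_{d}\right)$.

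Suppose first that $H$ is amenable. Then Theorem~\ref{thm:H_amen_iff_chiH_approx} gives that $\chi_{H}$ is $A\left(G\right)$-approximable, which by the definition of approximability means precisely $\chi_{H}\in A^{d}\left(G\right)$. Since $\chi_{H}$ is positive definite on $G_{d}$ and identically $1$ on $H$, it also lies in $P_{H}\left(G_{d}\right)$, so $\chi_{H}\in A^{d}\left(G\right)\cap P_{H}\left(G_{d}\right)$; as $\chi_{H}\left(s\right)=0\neq 1$ for every $s\in G\setminus H$, the single function $\chi_{H}$ already witnesses the discretized $H$-separation property.

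For the converse, I would transport one witnessing function through the restriction $r_{H}:B\left(G_{d}\right)\rightarrow B\left(H_{d}\right)$. Fix $s\in G\setminus H$ and $u\in A^{d}\left(G\right)\cap P_{H}\left(G_{d}\right)$ with $u\left(s\right)\neq 1$. The map $r_{H}$ is weak$^{*}$ continuous (by the argument of Lemma~\ref{lem:Restr_cont}, exactly as already used in the proof of Theorem~\ref{thm:H_amen_iff_chiH_approx}), and since it carries $A\left(G\right)$ onto $A\left(H\right)\subseteq A^{d}\left(H\right)$, it carries the weak$^{*}$ closure $A^{d}\left(G\right)$ into $A^{d}\left(H\right)$. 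But $u\in P_{H}\left(G_{d}\right)$ forces $\left.u\right|_{H}\equiv 1$, so $r_{H}\left(u\right)=1_{H}\in A^{d}\left(H\right)$. Viewing $H$ as a closed subgroup of itself, this says precisely that $\chi_{H}=1_{H}$ is $A\left(H\right)$-approximable, so Theorem~\ref{thm:H_amen_iff_chiH_approx} applied with ambient group $H$ yields the amenability of $H$.

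I do not anticipate a serious obstacle: the corollary is essentially bookkeeping on top of Theorem~\ref{thm:H_amen_iff_chiH_approx}. The points that need attention are the weak$^{*}$ continuity of $r_{H}$ on $B\left(G_{d}\right)$ together with the surjectivity of $r_{H}:A\left(G\right)\rightarrow A\left(H\right)$ --- which together force $r_{H}\left(A^{d}\left(G\right)\right)\subseteq A^{d}\left(H\right)$ --- and the legitimacy of re-applying Theorem~\ref{thm:H_amen_iff_chiH_approx} with $G$ replaced by $H$. Note also that the converse consumes only a single witnessing function $u$, so one should read the hypothesis (in particular in the degenerate case $H=G$) as asserting that $A^{d}\left(G\right)\cap P_{H}\left(G_{d}\right)$ genuinely contains such a function.
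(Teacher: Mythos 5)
Your argument is correct, and your converse takes a genuinely different route from the paper's. The paper proves the corollary by rerunning the whole product/power construction from the proof of Proposition \ref{pro:BofG_discretized_sep_prop} inside $A^{d}\left(G\right)$: since $1_{G}$ need not lie in $A^{d}\left(G\right)$, the normalization $\frac{1}{2}\left(1_{G}+u_{s}\right)$ is replaced by $\frac{1}{2}\left(u_{s}^{2}+u_{s}\right)$ (which stays in the weak$^{*}$ closed algebra $A^{d}\left(G\right)$ and still satisfies $\left|v\left(s\right)\right|<1$), and the resulting cluster points and finite products show that the separation property is \emph{equivalent} to $\chi_{H}\in A^{d}\left(G\right)$; Theorem \ref{thm:H_amen_iff_chiH_approx} then closes the loop. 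You instead push a single witness through the weak$^{*}$ continuous restriction $r_{H}:B\left(G_{d}\right)\rightarrow B\left(H_{d}\right)$ to get $1_{H}=r_{H}\left(u\right)\in A^{d}\left(H\right)$ and apply Theorem \ref{thm:H_amen_iff_chiH_approx} with ambient group $H$ itself; this bypasses the product construction entirely and only uses the converse half of the theorem, at the cost of not recovering the sharper intermediate fact that the separation property forces $\chi_{H}\in A^{d}\left(G\right)$. All the ingredients you invoke are available in the paper: the weak$^{*}$ continuity of $r_{H}$ on $B\left(G_{d}\right)$ is used in the proof of Theorem \ref{thm:H_amen_iff_chiH_approx}, $r_{H}\left(A\left(G\right)\right)=A\left(H\right)$ is in the preliminaries, and continuity carries the weak$^{*}$ closure $A^{d}\left(G\right)$ into $A^{d}\left(H\right)$. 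Your closing caveat is well taken and applies equally to the paper's proof: when $H=G$ the separation property is vacuous and the equivalence degenerates, so the hypothesis must be read as supplying at least one function in $A^{d}\left(G\right)\cap P_{H}\left(G_{d}\right)$ — which your argument, notably, is all it actually consumes.
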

\begin{proof}
In the argument establishing Proposition \ref{pro:BofG_discretized_sep_prop},
substituting $\frac{1}{2}\left(u_{s}^{2}+u_{s}\right)$ for the function
$\frac{1}{2}\left(1_{G}+u_{s}\right)$ yields a proof that $G$ has
the desired property if and only if $\chi_{H}$ is $A\left(G\right)$-approximable.
\end{proof}

\section{\textup{\label{sec:Invariant_projections}}Invariant projections
and bounded approximate indicators}

In this section we establish some consequences of the existence of
a bounded approximate indicator for a closed subgroup of a locally
compact group. We first provide the well known argument that this
stronger separation property indeed yields invariant projections.
For a commutative completely contractive Banach algebra $\mathcal{A}$,
let $CB_{\mathcal{A}}\left(\mathcal{A}^{*}\right)$ denote the completely
bounded $\mathcal{A}$-bimodule maps on $\mathcal{A}^{*}$. This space
has compact unit ball when given the \emph{weak$^{*}$ operator topology},
which is determined by the seminorms
\begin{eqnarray*}
\Psi\mapsto\left|\left\langle \Psi\left(\varphi\right),a\right\rangle \right| &  & \left(\varphi\in\mathcal{A}^{*},a\in\mathcal{A}\right).
\end{eqnarray*}

\begin{prop}
\label{pro:Approx_ind_yield_inv_proj_AG}Let $G$ be a locally compact
group and $H$ a closed subgroup. If there is a bounded approximate
indicator for $H$, then there is a completely bounded invariant projection
$VN\left(G\right)\rightarrow VN_{H}\left(G\right)$. If there is a
bounded approximate indicator for $H$ consisting of positive definite
functions, then there is a completely positive invariant projection
$VN\left(G\right)\rightarrow VN_{H}\left(G\right)$.
\end{prop}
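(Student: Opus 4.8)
The plan is to obtain $\Psi$ as a cluster point, in the weak$^{*}$ operator topology, of the net of multiplier maps attached to a bounded approximate indicator $(m_\alpha)_\alpha$ for $H$, whose existence is assumed. First I would record the elementary facts that each $m\in M_{cb}A(G)$ induces, as the adjoint of the multiplication $u\mapsto mu$ on $A(G)$, a map $M_m\colon VN(G)\to VN(G)$ with $\|M_m\|_{cb}=\|m\|_{M_{cb}A(G)}$, and that $M_m$ is an $A(G)$-bimodule map (immediate from the identity $\langle v\cdot S,u\rangle=\langle S,uv\rangle$ together with commutativity of $A(G)$, which give $v\cdot M_m(T)=M_m(v\cdot T)$). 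If $C$ bounds $(m_\alpha)_\alpha$ in $M_{cb}A(G)$, then every $M_{m_\alpha}$ lies in $C$ times the unit ball of $CB_{A(G)}(VN(G))$, which is compact for the weak$^{*}$ operator topology by the paragraph preceding the statement; I would fix a cluster point $\Psi$ there and pass to a subnet of $(m_\alpha)_\alpha$ realizing it, noting that this subnet is still a bounded approximate indicator for $H$. The map $\Psi$ is then a completely bounded $A(G)$-bimodule map of norm at most $C$, the bimodule relation being closed in the weak$^{*}$ operator topology.

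Next I would check that $\Psi$ is a projection of $VN(G)$ onto $VN_H(G)=I_{A(G)}(H)^{\perp}$. For $\Psi(VN(G))\subseteq VN_H(G)$: given $T\in VN(G)$ and $w\in I_{A(G)}(H)$, the defining property $\|m_\alpha w\|_{A(G)}\to 0$ of a bounded approximate indicator yields $\langle M_{m_\alpha}(T),w\rangle=\langle T,m_\alpha w\rangle\to 0$, hence $\langle\Psi(T),w\rangle=0$. For $\Psi|_{VN_H(G)}=\mathrm{id}$: recall that $r_H^{*}\colon VN(H)\to VN_H(G)$ is a $*$-isomorphism whose preadjoint is the (multiplicative) restriction $r_H\colon A(G)\to A(H)$; writing $T=r_H^{*}(\widetilde T)$ with $\widetilde T\in VN(H)$, for every $u\in A(G)$ we get $\langle M_{m_\alpha}(T),u\rangle=\langle\widetilde T,r_H(u)\,r_H(m_\alpha)\rangle$, and the defining property $\|v\,r_H(m_\alpha)-v\|_{A(H)}\to 0$ for $v\in A(H)$, applied to $v=r_H(u)$, gives $\langle M_{m_\alpha}(T),u\rangle\to\langle\widetilde T,r_H(u)\rangle=\langle T,u\rangle$. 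Thus $\langle\Psi(T),u\rangle=\langle T,u\rangle$ for all $u\in A(G)$, so $\Psi(T)=T$. Consequently $\Psi^{2}=\Psi$ with range exactly $VN_H(G)$, and $\Psi$ is a completely bounded invariant projection.

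For the final assertion, suppose $m_\alpha=u_\alpha\in P(G)$. Then each $M_{u_\alpha}$ is completely positive: writing $u_\alpha=\langle\pi(\cdot)\xi\mid\xi\rangle$, Gilbert's representation theorem (Theorem~\ref{thm:Gilbert}) applies with $P=Q$, and the induced map on $VN(G)$ is then completely positive — this is standard. Since the completely positive maps form a weak$^{*}$-operator-topology closed cone, the cluster point $\Psi$ is completely positive, which gives the asserted completely positive invariant projection.

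I expect the only point genuinely requiring care to be this last claim, that multiplication by a positive definite function induces a completely positive map $VN(G)\to VN(G)$ (for which one unwinds the Stinespring-type factorization implicit in Gilbert's theorem with $P=Q$); everything else is the routine compactness-and-limit argument signalled by the paragraph before the statement, the only caveat being that all the limit identities above are understood along the subnet defining $\Psi$.
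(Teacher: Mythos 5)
Your proposal is correct and follows essentially the same route as the paper: take a weak$^{*}$ operator topology cluster point of the multiplication maps $M_{m_\alpha}$ in $CB_{A(G)}(VN(G))$, verify invariance, use the two defining properties of a bounded approximate indicator to show $\Psi$ is the identity on $VN_H(G)=I_{A(G)}(H)^{\perp}$ and maps into it, and for the positive definite case invoke complete positivity of each $M_{u_\alpha}$ (the paper cites de Canni\`ere--Haagerup, Proposition 4.2) together with closedness of the completely positive maps under weak$^{*}$ operator topology limits (the paper cites Paulsen, Theorem 7.4). The only cosmetic difference is that you derive invariance of $\Psi$ from closedness of the bimodule relation rather than by the direct limit computation, which is an equivalent observation.
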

\begin{proof}
Let $\left(m_{\alpha}\right)_{\alpha}$ a bounded approximate indicator
for $H$, so that the net of multiplication maps $\left(M_{m_{\alpha}}\right)_{\alpha}$
in $CB_{A\left(G\right)}\left(VN\left(G\right)\right)$ is then bounded
and thus has a $\mbox{weak}^{*}$ operator topology cluster point
$\Psi\in CB\left(VN\left(G\right)\right)$. Passing to a subnet if
necessary, we may assume that $\Psi$ is the limit of this net. For
$u,v\in A\left(G\right)$ and $T\in VN\left(G\right)$,
\[
\left\langle \Psi\left(v\cdot T\right),u\right\rangle =\lim_{\alpha}\left\langle T,m_{\alpha}uv\right\rangle =\left\langle \Psi\left(T\right),uv\right\rangle =\left\langle v\cdot\Psi\left(T\right),u\right\rangle ,
\]
showing that $\Psi$ is invariant. Given $T\in VN_{H}\left(G\right)$,
so that $T=r_{H}^{*}\left(S\right)$ for some $S\in VN\left(H\right)$,
we have for $u\in A\left(G\right)$ that
\begin{eqnarray*}
\left\langle \Psi\left(T\right),u\right\rangle  & = & \lim_{\alpha}\left\langle S,r_{H}\left(uu_{\alpha}\right)\right\rangle \\
 & = & \lim_{\alpha}\left\langle S,r_{H}\left(u\right)r_{H}\left(u_{\alpha}\right)\right\rangle \\
 & = & \left\langle S,r_{H}\left(u\right)\right\rangle \\
 & = & \left\langle T,u\right\rangle ,
\end{eqnarray*}
whence $\Psi$ is the identity on $VN_{H}\left(G\right)$. For $T\in VN\left(G\right)$
and $u\in I_{A\left(G\right)}\left(H\right)$ we have $\left\langle \Psi\left(T\right),u\right\rangle =\lim_{\alpha}\left\langle S,uu_{\alpha}\right\rangle =0$,
so that $\Psi$ maps into $I_{A\left(G\right)}\left(H\right)^{\perp}=VN_{H}\left(G\right)$
and is thus a projection onto $VN_{H}\left(G\right)$.

If the functions $m_{\alpha}$ are in $P\left(G\right)$, then the
maps $M_{m_{\alpha}}$ are completely positive \cite[Proposition 4.2]{dCH}
and by \cite[Theorem 7.4]{Paulsen} their weak$^{*}$ operator topology
cluster point $\Psi$ is also completely positive.
\end{proof}
From the preceding we obtain an analogous result for $A_{cb}\left(G\right)$,
at least when $G$ is a weakly amenable locally compact group.
\begin{prop}
\label{pro:Approx_ind_yield_inv_proj_Acb}Let $G$ be a weakly amenable
locally compact group and $H$ a closed subgroup. If there is a bounded
approximate indicator for $H$, then there is a completely bounded
invariant projection $A_{cb}\left(G\right)^{*}\rightarrow I_{A_{cb}\left(G\right)}\left(H\right)^{\perp}$.
\end{prop}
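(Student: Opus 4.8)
The plan is to run the argument of Proposition~\ref{pro:Approx_ind_yield_inv_proj_AG} with $A_{cb}(G)$ replacing $A(G)$; the one essentially new ingredient is a density fact for which weak amenability is used. Fix a bounded approximate indicator $(m_\alpha)_\alpha$ for $H$ with $C:=\sup_\alpha\|m_\alpha\|_{M_{cb}A(G)}<\infty$. Since $M_{cb}A(G)$ is a completely contractive Banach algebra containing $A_{cb}(G)$ as a completely contractive subalgebra, multiplication by $m_\alpha$ restricts to a completely bounded map $L_\alpha\colon A_{cb}(G)\to A_{cb}(G)$ with $\|L_\alpha\|_{cb}\le\|m_\alpha\|_{M_{cb}A(G)}\le C$, and commutativity of $M_{cb}A(G)$ makes $L_\alpha$ an $A_{cb}(G)$-module map. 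Hence $(L_\alpha^*)_\alpha$ is a bounded net in $CB_{A_{cb}(G)}\!\left(A_{cb}(G)^*\right)$, whose unit ball is compact in the weak$^*$ operator topology; let $\Psi$ be a cluster point and, passing to a subnet, assume $L_\alpha^*\to\Psi$. Exactly as in Proposition~\ref{pro:Approx_ind_yield_inv_proj_AG}, $\Psi$ is a completely bounded $A_{cb}(G)$-bimodule map, and $\langle\Psi(\varphi),u\rangle=\lim_\alpha\langle\varphi,m_\alpha u\rangle$ for all $\varphi\in A_{cb}(G)^*$, $u\in A_{cb}(G)$. It then remains to prove that $\Psi$ has range in $I_{A_{cb}(G)}(H)^{\perp}$ and restricts to the identity there, which together force $\Psi$ to be a projection onto $I_{A_{cb}(G)}(H)^{\perp}$.

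The key lemma---and where weak amenability enters---is that $I_{A(G)}(H)$ is dense in $I_{A_{cb}(G)}(H)$ in the cb-multiplier norm. By weak amenability $A_{cb}(G)$ has a bounded approximate identity, and since $A(G)$ is dense in $A_{cb}(G)$ a routine perturbation yields such a bounded approximate identity $(e_\beta)_\beta$ with every $e_\beta\in A(G)$. If $u\in A_{cb}(G)$ vanishes on $H$, then, because $A_{cb}(G)\subset M_{cb}A(G)$ and cb-multipliers map $A(G)$ into $A(G)$, each $ue_\beta$ lies in $A(G)$, vanishes on $H$, and satisfies $ue_\beta\to u$ in $\|\cdot\|_{M_{cb}A(G)}$; this gives the density. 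With it, one upgrades the condition $\|um_\alpha\|_{A(G)}\to 0$ ($u\in I_{A(G)}(H)$) from Definition~\ref{def:Subgroup_approx_properties}: for $u\in I_{A_{cb}(G)}(H)$ and $\varepsilon>0$ choose $u'\in I_{A(G)}(H)$ with $\|u-u'\|_{M_{cb}A(G)}<\varepsilon$; then, using $\|\cdot\|_{M_{cb}A(G)}\le\|\cdot\|_{A(G)}$ on $A(G)$,
\[
\|m_\alpha u\|_{A_{cb}(G)}\le\|m_\alpha\|_{M_{cb}A(G)}\,\|u-u'\|_{M_{cb}A(G)}+\|m_\alpha u'\|_{A(G)}\le C\varepsilon+\|m_\alpha u'\|_{A(G)},
\]
and the last term tends to $0$, so $\|m_\alpha u\|_{A_{cb}(G)}\to 0$. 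Consequently $\langle\Psi(\varphi),u\rangle=\lim_\alpha\langle\varphi,m_\alpha u\rangle=0$ for every $\varphi\in A_{cb}(G)^*$ and $u\in I_{A_{cb}(G)}(H)$, i.e. $\Psi$ maps into $I_{A_{cb}(G)}(H)^{\perp}$.

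For the last point, fix $\varphi\in I_{A_{cb}(G)}(H)^{\perp}$. For $u\in A(G)$, the condition $\|r_H(u)\,r_H(m_\alpha)-r_H(u)\|_{A(H)}\to 0$ from Definition~\ref{def:Subgroup_approx_properties} says $\|r_H(m_\alpha u-u)\|_{A(H)}\to 0$; since $r_H\colon A(G)\to A(H)$ is a complete quotient, lift to $w_\alpha\in A(G)$ with $r_H(w_\alpha)=r_H(m_\alpha u-u)$ and $\|w_\alpha\|_{A(G)}\to 0$, so that $m_\alpha u-u-w_\alpha\in I_{A(G)}(H)\subset I_{A_{cb}(G)}(H)$ and therefore $\langle\varphi,m_\alpha u-u\rangle=\langle\varphi,w_\alpha\rangle\to 0$. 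An $\varepsilon$-approximation of a general $u\in A_{cb}(G)$ by elements of $A(G)$ in cb-multiplier norm, together with $\sup_\alpha\|m_\alpha\|_{M_{cb}A(G)}\le C$, promotes this to all of $A_{cb}(G)$, giving $\langle\Psi(\varphi),u\rangle=\lim_\alpha\langle\varphi,m_\alpha u\rangle=\langle\varphi,u\rangle$, hence $\Psi(\varphi)=\varphi$. Thus $\Psi$ is a completely bounded invariant projection $A_{cb}(G)^*\to I_{A_{cb}(G)}(H)^{\perp}$. I expect the density lemma of the second paragraph to be the crux: it is exactly what requires the weak amenability hypothesis, since the defining property $\|um_\alpha\|_{A(G)}\to 0$ only controls products over the smaller ideal $I_{A(G)}(H)$, and without passing to its cb-multiplier closure one cannot conclude that $\Psi$ takes values in $I_{A_{cb}(G)}(H)^{\perp}$.
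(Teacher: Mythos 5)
Your proof is correct and follows the paper's overall strategy---extracting a weak$^{*}$ operator topology cluster point $\Psi$ of the adjoints of the multiplication maps $u\mapsto m_{\alpha}u$ on $A_{cb}\left(G\right)$---but it differs in two internal steps. The paper does not verify idempotence and the range condition directly: it passes to a further subnet so that the same multipliers also produce the invariant projection $\Psi_{A}$ on $VN\left(G\right)$ from Proposition \ref{pro:Approx_ind_yield_inv_proj_AG}, establishes the intertwining $\Psi_{A}\iota^{*}=\iota^{*}\Psi$ for the inclusion $\iota:A\left(G\right)\rightarrow A_{cb}\left(G\right)$, and deduces $\Psi^{2}=\Psi$ from injectivity of $\iota^{*}$; you instead obtain idempotence from the fact that $\Psi$ is the identity on $I_{A_{cb}\left(G\right)}\left(H\right)^{\perp}$, which you prove by lifting through the complete quotient $r_{H}$ rather than by transferring from $\Psi_{A}$. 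More substantively, where the paper identifies $\left(\overline{I_{A\left(G\right)}\left(H\right)}^{A_{cb}\left(G\right)}\right)^{\perp}$ with $I_{A_{cb}\left(G\right)}\left(H\right)^{\perp}$ by citing \cite[Proposition 3.1]{FRS} on the transfer of spectral synthesis to $A_{cb}\left(G\right)$, you prove the required density of $I_{A\left(G\right)}\left(H\right)$ in $I_{A_{cb}\left(G\right)}\left(H\right)$ directly, by perturbing a bounded approximate identity of $A_{cb}\left(G\right)$ into $A\left(G\right)$ and using that cb-multipliers map $A\left(G\right)$ into $A\left(G\right)$; this is more elementary and, as you observe, needs no synthesis at all since it applies to the full vanishing ideal of any closed set. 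Both arguments invoke weak amenability at exactly the same point (existence of the bounded approximate identity for $A_{cb}\left(G\right)$), so your route buys self-containedness at the cost of redoing within $A_{cb}\left(G\right)^{*}$ the computations the paper imports from the $VN\left(G\right)$ level.
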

\begin{proof}
Let $\left(m_{\alpha}\right)_{\alpha}$ an approximate indicator for
$H$. Since $A\left(G\right)$ is an ideal in $M_{cb}A\left(G\right)$,
so too is its closure $A_{cb}\left(G\right)$, so that multiplication
by $m_{\alpha}$ is a completely bounded $A_{cb}\left(G\right)$-module
map on $A_{cb}\left(G\right)$. Denote its adjoint by $M_{m_{\alpha}}$.
Passing to a subnet, we may assume that $\left(M_{m_{\alpha}}\right)_{\alpha}$
has a weak$^{*}$ operator topology limit $\Psi\in CB\left(A_{cb}\left(G\right)^{*}\right)$,
and passing to a further subnet we may assume that the net of maps
$\left(M_{m_{\alpha}}\right)_{\alpha}$ in $CB_{A\left(G\right)}\left(VN\left(G\right)\right)$
also has a weak$^{*}$ operator topology limit $\Psi_{A}$, which
is an invariant projection $VN\left(G\right)\rightarrow VN_{H}\left(G\right)$
by the argument of Proposition \ref{pro:Approx_ind_yield_inv_proj_AG}.
For $u,v\in A_{cb}\left(G\right)$ and $T\in A_{cb}\left(G\right)^{*}$,
we have
\[
\left\langle \Psi\left(v\cdot T\right),u\right\rangle =\lim_{\alpha}\left\langle T,m_{\alpha}uv\right\rangle =\left\langle \Psi\left(T\right),uv\right\rangle =\left\langle v\cdot\Psi\left(T\right),u\right\rangle ,
\]
so $\Psi$ is invariant. Let $\iota:A\left(G\right)\rightarrow A_{cb}\left(G\right)$
be the inclusion. If $T\in A_{cb}\left(G\right)^{*}$ and $u\in A\left(G\right)$,
then
\begin{eqnarray*}
\left\langle \Psi_{A}\iota^{*}\left(T\right),u\right\rangle  & = & \lim_{\alpha}\left\langle \iota^{*}\left(T\right),um_{\alpha}\right\rangle \\
 & = & \lim_{\alpha}\left\langle T,\iota\left(u\right)m_{\alpha}\right\rangle \\
 & = & \left\langle \Psi\left(T\right),\iota\left(u\right)\right\rangle \\
 & = & \left\langle \iota^{*}\Psi\left(T\right),u\right\rangle 
\end{eqnarray*}
and $\Psi_{A}\iota^{*}=\iota^{*}\Psi$ by density of $A\left(G\right)$
in $A_{cb}\left(G\right)$. It follows that 
\[
\iota^{*}\Psi^{2}=\Psi_{A}\iota^{*}\Psi=\Psi_{A}^{2}\iota^{*}=\Psi_{A}\iota^{*}=\iota^{*}\Psi,
\]
which, together with injectivity of $\iota^{*}$, implies that $\Psi^{2}=\Psi$.
If $T\in I_{A_{cb}\left(G\right)}\left(H\right)^{\perp}$, then $\iota^{*}\left(T\right)\in I_{A\left(G\right)}\left(H\right)^{\perp}$
and
\begin{eqnarray*}
\left\langle \Psi\left(T\right),\iota\left(u\right)\right\rangle =\left\langle \Psi_{A}\iota^{*}\left(T\right),u\right\rangle =\left\langle \iota^{*}\left(T\right),u\right\rangle =\left\langle T,\iota\left(u\right)\right\rangle  &  & \left(u\in A\left(G\right)\right),
\end{eqnarray*}
whence $\Psi\left(T\right)=T$, again by density of $A\left(G\right)$.
Therefore $I_{A_{cb}\left(G\right)}\left(H\right)^{\perp}$ is contained
in the range of $\Psi$. Finally, for any $T\in A_{cb}\left(G\right)^{*}$,
if $u\in I_{A\left(G\right)}\left(H\right)$, then $\left\langle \Psi\left(T\right),\iota\left(u\right)\right\rangle =\left\langle \Psi_{A}\iota^{*}\left(T\right),u\right\rangle =0$,
so $\Psi\left(T\right)\in I_{A\left(G\right)}\left(H\right)^{\perp}=\left(\overline{I_{A\left(G\right)}\left(H\right)}^{A_{cb}\left(G\right)}\right)^{\perp}$.
That $A_{cb}\left(G\right)$ has bounded approximate identity implies
every set of synthesis for $A\left(G\right)$ is one for $A_{cb}\left(G\right)$
\cite[Proposition 3.1]{FRS} and, because compactly supported functions
in $A_{cb}\left(G\right)$ are in $A\left(G\right)$, that $H$ is
of spectral synthesis for $A_{cb}\left(G\right)$ is exactly the assertion
that $\overline{I_{A\left(G\right)}\left(H\right)}^{A_{cb}\left(G\right)}=I_{A_{cb}\left(G\right)}\left(H\right)$.
Therefore $\Psi$ has range in $I_{A_{cb}\left(G\right)}\left(H\right)^{\perp}$.
\end{proof}
Note that the arguments of the preceding two propositions yield projections
of completely bounded norm at most the bound on an approximate indicator
for the subgroup. Let $\Lambda_{G}$ denotes the Cowling\textendash Haagerup
constant, that is, the infimum of bounds on approximate identities
for $A_{cb}\left(G\right)$.
\begin{cor}
\label{cor:Approx_ind_yield_bai_ideal_Acb}Let $G$ be a weakly amenable
locally compact group and $H$ a closed subgroup for which an approximate
indicator of bound $C$ exists. The following hold:
\end{cor}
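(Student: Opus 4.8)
The plan is to read every conclusion off the completely bounded invariant projection $\Psi:A_{cb}(G)^{*}\to I_{A_{cb}(G)}(H)^{\perp}$ produced in Proposition \ref{pro:Approx_ind_yield_inv_proj_Acb} — which, by the remark preceding the corollary, may be taken of completely bounded norm at most $C$ — together with a bounded approximate identity for $A_{cb}(G)$, which exists because $G$ is weakly amenable. The heart of the matter is that the ideal $I:=I_{A_{cb}(G)}(H)$ then acquires a bounded approximate identity; the other assertions (the norm estimate on the invariant projections, and the $A_{cb}(G)$-approximability of $\chi_{H}$) come cheaply afterwards, the latter via Section \ref{sec:Approximability_of_char_funs}.

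For the bounded approximate identity of $I$ I would argue in the bidual. Fix $\epsilon>0$, pick a bounded approximate identity $(e_{\beta})_{\beta}$ for $A_{cb}(G)$ of bound $\Lambda_{G}+\epsilon$, and let $E$ be a weak$^{*}$ cluster point of it in $A_{cb}(G)^{**}$, so that $\|E\|\le\Lambda_{G}+\epsilon$ and, with $A_{cb}(G)$ acting on its bidual through the first Arens product, $u\cdot E=u$ for every $u\in A_{cb}(G)$. Set $F=E-\Psi^{*}(E)$. Idempotence of $\Psi$ gives $\Psi^{*}F=0$, and $\ker\Psi^{*}$ is the annihilator in $A_{cb}(G)^{**}$ of the range $I^{\perp}$ of $\Psi$, hence is canonically $I^{**}$; so $F\in I^{**}$. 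Since $\Psi$ is an $A_{cb}(G)$-bimodule map so is $\Psi^{*}$, and $\langle\Psi^{*}(u),\phi\rangle=\langle u,\Psi(\phi)\rangle=0$ for $u\in I$ because $\Psi(\phi)\in I^{\perp}$; therefore for $u\in I$
\[
u\cdot F=u\cdot E-\Psi^{*}(u\cdot E)=u-\Psi^{*}(u)=u ,
\]
so $F$ is a right identity for $I^{**}$ of norm at most $(1+C)(\Lambda_{G}+\epsilon)$. By Goldstine's theorem $F$ is a weak$^{*}$ limit of a net $(f_{j})_{j}$ in $I$ bounded by $\|F\|$; since left multiplication by a fixed element of $I$ is weak$^{*}$-continuous for the first Arens product, $u\cdot F=u$ forces $uf_{j}\to u$ weakly for each $u\in I$, and a standard convexification (Mazur) upgrades this to norm convergence, producing a bounded approximate identity for $I$ of bound at most $(1+C)(\Lambda_{G}+\epsilon)$. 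As $I$ is an ideal of the commutative algebra $A_{cb}(G)$ it is automatically two-sided, and letting $\epsilon\downarrow0$ improves the bound to $(1+C)\Lambda_{G}$.

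The residual points are quick. Such an approximate identity $(f_{j})_{j}$ vanishes on $H$ and converges pointwise to $\chi_{G\setminus H}$ — evaluate at $s\notin H$ against an element of $I_{A(G)}(H)$ equal to $1$ at $s$, using \cite[Lemme 3.2]{Eymard} — hence, being bounded, converges to $\chi_{G\setminus H}$ in the weak$^{*}$ topology of $M_{cb}A(G_{d})$, so $\chi_{G\setminus H}\in A_{cb}^{d}(G)$. Weak amenability of $G$ makes $1_{G}$ $A_{cb}(G)$-approximable (it supplies a $\Delta$-weak bounded approximate identity), whence $\chi_{H}=1_{G}-\chi_{G\setminus H}$ is $A_{cb}(G)$-approximable as well; and the completely bounded norm bound on the invariant projections onto $VN_{H}(G)$ and onto $I^{\perp}$ is literally the remark preceding the statement.

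The step I expect to give the most trouble is the bidual/Arens bookkeeping in the middle paragraph: making precise that $\ker\Psi^{*}$ is $I^{**}$ (which rests on the identification of the range of $\Psi$ in Proposition \ref{pro:Approx_ind_yield_inv_proj_Acb}, itself a consequence of weak amenability of $G$ via spectral synthesis of $H$ for $A_{cb}(G)$), that $\Psi^{*}$ annihilates $I$, and that the first Arens product meshes with the $A_{cb}(G)$-module action so that the cluster point $F$ genuinely becomes an approximate identity after convexification. Tracking the constant $(1+C)\Lambda_{G}$ through these manipulations is otherwise routine.
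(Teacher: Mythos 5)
Your middle paragraph is a correct and essentially self-contained proof of assertion (1): you have unpacked the citation to \cite[Proposition 6.4]{Forrest1} into the standard bidual argument (cluster point $E$ of a bounded approximate identity, $F=E-\Psi^{*}(E)\in(I^{\perp})^{\perp}\cong I^{**}$, Goldstine plus Mazur), and the bookkeeping there --- $\ker\Psi^{*}=(\operatorname{ran}\Psi)^{\perp}$, $\Psi^{*}\kappa(u)=0$ for $u\in I$, the module property of $\Psi^{*}$ --- all checks out, yielding the bound $(1+C)\Lambda_{G}$ exactly as in the paper (modulo the cosmetic point that the paper takes an approximate identity of bound $\Lambda_{G}$ outright, so your $\epsilon$-limit is unnecessary, and without attainment a diagonal argument would be needed to remove the $\epsilon$).

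The genuine gap is that the corollary asserts three things, and you prove only the first. Items (2) and (3) are: an approximate indicator for $H$ of bound $1+(1+C)\Lambda_{G}$ that is identically one on $H$, and an approximate indicator for $H$ of bound $C\Lambda_{G}$ lying in $A_{cb}(G)$. Your closing paragraph instead establishes two statements that are not part of the corollary (the cb-norm bound on the invariant projections, and the $A_{cb}(G)$-approximability of $\chi_{H}$). For (2) the paper takes $(1_{G}-e_{\alpha})_{\alpha}$ with $(e_{\alpha})_{\alpha}$ the approximate identity for $I_{A_{cb}(G)}(H)$ from (1); you come close to this when you observe that your net $(f_{j})_{j}$ vanishes on $H$ and converges pointwise to $\chi_{G\setminus H}$, but you never verify the two defining conditions of a bounded approximate indicator for $1_{G}-f_{j}$, and condition (b) of Definition \ref{def:Subgroup_approx_properties} (convergence in the $A(G)$-norm, not merely pointwise or in $M_{cb}A(G_{d})$ weak$^{*}$) is not a formal consequence of what you wrote. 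Item (3) is entirely absent: it requires the product construction $u_{\gamma}=m_{\beta}e_{\alpha_{\beta}}$ indexed by $B\times A^{B}$ with the iterated-limit argument of \cite[p.~69]{Kelley} to land the indicator inside the ideal $A_{cb}(G)$ of $M_{cb}A(G)$ while keeping the bound $C\Lambda_{G}$. As it stands the proposal proves one third of the statement.
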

\begin{enumerate}
\item $I_{A_{cb}\left(G\right)}\left(H\right)$ has an approximate identity
of bound $\left(1+C\right)\Lambda_{G}$.
\item An approximate indicator for $H$ of bound $1+\left(1+C\right)\Lambda_{G}$
exists that takes the value one on $H$.
\item An approximate indicator for $H$ of bound $C\Lambda_{G}$ exists
in $A_{cb}\left(G\right)$.
\end{enumerate}
\begin{proof}
(1) The argument of Proposition \ref{pro:Approx_ind_yield_inv_proj_Acb}
yields an invariant projection $A_{cb}\left(G\right)^{*}\rightarrow I_{A_{cb}\left(G\right)}\left(H\right)^{\perp}$
of norm at most $C$ and, because the Banach algebra $A_{cb}\left(G\right)$
has an approximate identity of bound $\Lambda_{G}$, it follows from
\cite[Proposition 6.4]{Forrest1} and its proof that the ideal $I_{A_{cb}\left(G\right)}\left(H\right)$
has an approximate identity of bound $\left(1+C\right)\Lambda_{G}$.

(2) If $\left(e_{\alpha}\right)_{\alpha}$ is an approximate identity
for $I_{A_{cb}\left(G\right)}\left(H\right)$ of bound $\left(1+C\right)\Lambda_{G}$,
then $\left(1_{G}-e_{\alpha}\right)_{\alpha}$ is an approximate indicator
for $H$ with the claimed norm bound.

(3) Let $\left(e_{\alpha}\right)_{\alpha\in A}$ be a bounded approximate
identity for $A_{cb}\left(G\right)$, let $\left(m_{\beta}\right)_{\beta\in B}$
a bounded approximate indicator for $H$, and for $\gamma=\left(\beta,\left(\alpha_{\beta^{\prime}}\right)_{\beta^{\prime}\in B}\right)\in B\times A^{B}$
set $u_{\gamma}=m_{\beta}e_{\alpha_{\beta}}$, which is in the ideal
$A_{cb}\left(G\right)$ of $M_{cb}A\left(G\right)$. Giving $B\times A^{B}$
the product order, for $u\in A\left(H\right)$ and $w\in I_{A\left(G\right)}\left(H\right)$
we have the norm limits 
\[
\lim_{\gamma\in B\times A^{B}}r_{H}\left(u_{\gamma}\right)u=\lim_{\beta}\lim_{\alpha}r_{H}\left(m_{\beta}e_{\alpha}\right)u=\lim_{\beta}r_{H}\left(m_{\beta}\right)u=u
\]
and 
\[
\lim_{\gamma\in B\times A^{B}}u_{\gamma}w=\lim_{\beta}\lim_{\alpha}m_{\beta}e_{\alpha}w=\lim_{\beta}m_{\beta}w=0
\]
by \cite[p. 69]{Kelley}, hence $\left(u_{\gamma}\right)_{\gamma\in B\times A^{B}}$
is a bounded approximate indicator for $H$ of norm bound $\sup_{\alpha}\left\Vert e_{\alpha}\right\Vert _{M_{cb}A\left(G\right)}\sup_{\beta}\left\Vert m_{\beta}\right\Vert _{M_{cb}A\left(G\right)}$.
\end{proof}

\section{\label{sec:Averaging_over_subgroups}Convergence of cb-multipliers
and averaging over closed subgroups}

Fix a locally compact group $G$ and a closed subgroup $H$. It is
folklore that the convergence properties of nets of cb-multipliers
can be improved by convolving them with probability measures in $\mathcal{C}_{c}\left(G\right)$.
For example, Knudby recently recorded the following, the second part
of which originates in an argument of Cowling and Haagerup \cite[Proposition 1.1]{CH}.
If a net $\left(m_{\alpha}\right)_{\alpha}$ of functions on a topological
space converges uniformly on compact sets to a function $m$, we write
$m_{\alpha}\overset{ucs}{\rightarrow}m$.
\begin{thm}
\label{thm:Knudby_lemma}\emph{(}\cite[Lemma B.2]{Knudby}\emph{)}
Let $\left(m_{\alpha}\right)_{\alpha}$ be a bounded net in $M_{cb}A\left(G\right)$,
$m\in M_{cb}A\left(G\right)$, and let $f\in\mathcal{C}_{c}\left(G\right)$
be such that $f\geq0$ and $\int_{G}f=1$. Convolution on the left
with $f$ is a contraction on $M_{cb}A\left(G\right)$ and the following
hold:
\begin{enumerate}
\item If $m_{\alpha}\overset{w^{*}}{\rightarrow}m$ in $M_{cb}A\left(G\right)$,
then $f\ast m_{\alpha}\overset{ucs}{\rightarrow}f\ast m$.
\item If $m_{\alpha}\overset{ucs}{\rightarrow}m$, then $\left\Vert \left(f\ast m_{\alpha}\right)u-\left(f\ast m\right)u\right\Vert _{A\left(G\right)}\rightarrow0$
for all $u\in A\left(G\right)$.
\end{enumerate}
\end{thm}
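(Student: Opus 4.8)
The plan is to dispatch the contractivity of $m\mapsto f*m$ and part~(1) by soft arguments together with Gilbert's representation theorem, and to reduce part~(2) — the substantive statement — to a relative compactness assertion in $A(G)$. For contractivity, take $m\in M_{cb}A(G)$ with a Gilbert representation $m(s^{-1}t)=\langle P(t)\,|\,Q(s)\rangle$ as in Theorem~\ref{thm:Gilbert}. A change of variable gives $(f*m)(s^{-1}t)=\int_G f(r)\langle P(t)\,|\,Q(sr)\rangle\,dr=\langle P(t)\,|\,Q^{f}(s)\rangle$, where $Q^{f}(s)=\int_G f(r)Q(sr)\,dr=\int_G (\ell_s f)(r)Q(r)\,dr$ with $\ell_s f(r)=f(s^{-1}r)$. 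Since $f\geq 0$ and $\int_G f=1$ one has $\|Q^{f}\|_\infty\leq\|Q\|_\infty$, and continuity of translation in $L^1(G)$ makes $Q^{f}$ continuous; thus $(P,Q^{f})$ represents $f*m$ and $\|f*m\|_{M_{cb}A(G)}\leq\|P\|_\infty\|Q\|_\infty$. Taking the infimum over representations of $m$ yields $\|f*m\|_{M_{cb}A(G)}\leq\|m\|_{M_{cb}A(G)}$; linearity in $m$ is clear.

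For part~(1), fix a compact $K\subseteq G$ and, for $t\in G$, let $g_{t}\in L^1(G)\subseteq Q(G)$ be the function determined by $\langle m,g_{t}\rangle_{M_{cb}A(G),Q(G)}=(f*m)(t)$ for all $m$ (obtained from $f$ by the change of variable $s\mapsto s^{-1}t$). Since $f\in\mathcal C_c(G)$, the map $t\mapsto g_{t}$ is norm continuous from $G$ into $L^1(G)$, so $\{g_{t}:t\in K\}$ is norm compact in $L^1(G)$, hence in $Q(G)$. Now invoke the elementary fact that a bounded net in a dual Banach space converging weak$^{*}$ converges uniformly on norm-compact subsets of the predual (a three-$\varepsilon$ argument against a finite $\varepsilon$-net of the compact set). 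Applied to $(m_\alpha)$ and $\{g_{t}:t\in K\}$, this gives $\sup_{t\in K}|(f*m_\alpha)(t)-(f*m)(t)|=\sup_{t\in K}|\langle m_\alpha-m,g_{t}\rangle|\to 0$, i.e. $f*m_\alpha\overset{ucs}{\rightarrow}f*m$.

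For part~(2), I would first reduce to $m=0$: as $f*(\cdot)$ and pointwise multiplication are linear and $m_\alpha-m$ is again a bounded net in $M_{cb}A(G)$ converging to $0$ uniformly on compacta, it suffices to show $\|(f*m_\alpha)u\|_{A(G)}\to 0$ whenever $m_\alpha\overset{ucs}{\rightarrow}0$. By the contractivity above the maps $u\mapsto(f*m_\alpha)u$ on $A(G)$ are uniformly bounded, so one may take $u$ in the dense set of coefficients $u=\langle\lambda_{G}(\cdot)\xi\,|\,\eta\rangle$ with $\xi,\eta\in\mathcal C_c(G)$. Choose near-optimal Gilbert representations $m_\alpha(s^{-1}t)=\langle P_\alpha(t)\,|\,Q_\alpha(s)\rangle$ on a common Hilbert space $\mathcal H$ with $\sup_\alpha\|P_\alpha\|_\infty,\ \sup_\alpha\|Q_\alpha\|_\infty<\infty$. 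The standard formula for the multiplier action on $A(G)$ then exhibits $(f*m_\alpha)u$ as the coefficient function $s\mapsto\langle(\lambda_{G}(s)\otimes\mathrm{id}_{\mathcal H})\Xi_\alpha\,|\,H_\alpha\rangle$ of $\lambda_{G}\otimes\mathrm{id}_{\mathcal H}$ on $L^2(G;\mathcal H)$, where $\Xi_\alpha(\cdot)=\xi(\cdot)\otimes Q_\alpha^{f}(\cdot)$ and $H_\alpha(\cdot)=\eta(\cdot)\otimes P_\alpha(\cdot)$. These fields are uniformly bounded and supported in the fixed compact sets $\operatorname{supp}\xi$ and $\operatorname{supp}\eta$; moreover $\{\Xi_\alpha\}$ is \emph{uniformly equicontinuous}, since $\|Q_\alpha^{f}(s)-Q_\alpha^{f}(s')\|\leq\|Q_\alpha\|_\infty\,\|\ell_s f-\ell_{s'}f\|_{L^1(G)}$ and left translation is continuous, uniformly on compacta, on $L^1(G)$.

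By the Fr\'echet--Kolmogorov criterion $\{\Xi_\alpha\}$ is then relatively norm compact in $L^2(G;\mathcal H)$; since also $f*m_\alpha\overset{ucs}{\rightarrow}0$ (as $f$ has compact support and $m_\alpha\overset{ucs}{\rightarrow}0$), and hence $(f*m_\alpha)u\to 0$ pointwise, I would conclude $\|(f*m_\alpha)u\|_{A(G)}\to 0$ by passing to subnets along which $\Xi_\alpha$ converges in norm. The main obstacle is precisely this last step: upgrading the (easily obtained) pointwise, equivalently locally uniform, convergence of $(f*m_\alpha)u$ to convergence in the $A(G)$-norm. This is exactly where convolution by $f$ is indispensable — without it the statement fails (a Salem--Zygmund random trigonometric polynomial has $A(\mathbb T)$-norm one while tending to $0$ uniformly), and the point of convolving is to force the equicontinuity of the field $Q_\alpha^{f}$, which is what turns the compact support of $\xi$ and $\eta$ into genuine relative \emph{norm} compactness in $A(G)$ rather than mere relative weak compactness. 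Carrying out the compactness bookkeeping for $\lambda_{G}\otimes\mathrm{id}_{\mathcal H}$ — in particular handling the non-equicontinuous field $H_\alpha$, which enters only multiplied by the fixed, rapidly decaying $\eta$ — is the delicate part of the argument.
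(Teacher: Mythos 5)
This theorem is not proved in the paper at all; it is imported verbatim from \cite[Lemma B.2]{Knudby} (the second part going back to Cowling and Haagerup), so the only thing to assess is your argument on its own terms. Your proof of the contractivity of $m\mapsto f\ast m$ and of part (1) is correct and is the standard one: replacing $Q$ by $Q^{f}$ in a Gilbert representation, and testing a bounded weak$^{*}$-convergent net against the norm-compact set $\left\{ g_{t}:t\in K\right\} \subset L^{1}\left(G\right)\subset Q\left(G\right)$.

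Part (2) is where the content of the lemma lies, and your argument for it has a genuine gap --- in fact two. First, the compactness claim is false as stated: uniform boundedness, uniform equicontinuity, and a common compact support do \emph{not} yield relative norm compactness of $\left\{ \Xi_{\alpha}\right\} $ in $L^{2}\left(G;\mathcal{H}\right)$ when $\mathcal{H}$ is infinite dimensional. Both Arzel\`a--Ascoli and Fr\'echet--Kolmogorov require, in the vector-valued setting, relative compactness of the values in the fiber $\mathcal{H}$, and nothing in your setup provides this: the family $\Xi_{\alpha}\left(t\right)=\xi\left(t\right)\otimes e_{\alpha}$ with $\left(e_{\alpha}\right)$ orthonormal is uniformly equicontinuous, uniformly bounded, and supported in $\mbox{supp}\,\xi$, yet $\left\Vert \Xi_{\alpha}-\Xi_{\beta}\right\Vert _{2}^{2}=2\left\Vert \xi\right\Vert _{2}^{2}$ for $\alpha\neq\beta$. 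Worse, this behaviour cannot be ruled out by choosing representations carefully, since replacing $\left(P_{\alpha},Q_{\alpha}\right)$ by $\left(P_{\alpha}\otimes e_{\alpha},Q_{\alpha}\otimes e_{\alpha}\right)$ represents the same $m_{\alpha}$ with the same norms. Second, even granting a norm-convergent subnet of $\left(\Xi_{\alpha}\right)_{\alpha}$, the other leg $H_{\alpha}=\eta\otimes P_{\alpha}$ still varies with $\alpha$, so the coefficients $\left\langle \left(\lambda\left(\cdot\right)\otimes1\right)\Xi|H_{\alpha}\right\rangle $ need not converge in $A\left(G\right)$; you flag this yourself as ``the delicate part,'' which is to say the proof is not closed.

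The actual mechanism requires no compactness and no Gilbert representation of the $m_{\alpha}$. Reduce to $m=0$ and $u=\left\langle \lambda\left(\cdot\right)\xi|\eta\right\rangle $ with $\xi,\eta\in\mathcal{C}_{c}\left(G\right)$. A change of variables exhibits
\[
\left(\left(f\ast m_{\alpha}\right)u\right)\left(s\right)=\left\langle \left(\lambda\left(s\right)\otimes1\right)\Xi_{\alpha}|H\right\rangle _{L^{2}\left(G\times G\right)},\qquad\Xi_{\alpha}\left(a,w\right)=m_{\alpha}\left(wa^{-1}\right)\xi\left(a\right)c\left(w\right),
\]
where $c\in\mathcal{C}_{c}\left(G\right)$ equals $1$ on $\left(\mbox{supp}\,f\right)^{-1}\mbox{supp}\,\eta$ and $H\left(t,w\right)$ is built from $f$, $\eta$, $c$ and the modular function only. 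The crucial point --- and this is precisely what convolving with $f$ buys --- is that $H$ is a \emph{fixed} vector of $L^{2}\left(G\times G\right)$, independent of $\alpha$, while $\Xi_{\alpha}$ is supported in the fixed compact set $\mbox{supp}\,\xi\times\mbox{supp}\,c$ and satisfies $\left\Vert \Xi_{\alpha}\right\Vert _{2}\leq\sup\left\{ \left|m_{\alpha}\left(v\right)\right|:v\in\mbox{supp}\left(c\right)\left(\mbox{supp}\,\xi\right)^{-1}\right\} \left\Vert \xi\right\Vert _{2}\left\Vert c\right\Vert _{2}\rightarrow0$ by uniform convergence on compacta. Hence $\left\Vert \left(f\ast m_{\alpha}\right)u\right\Vert _{A\left(G\right)}\leq\left\Vert \Xi_{\alpha}\right\Vert _{2}\left\Vert H\right\Vert _{2}\rightarrow0$. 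Your Salem--Zygmund remark correctly identifies that the convolution is indispensable, but its role is to freeze one leg of the coefficient representation, not to generate norm compactness.
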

In this section, we develop an analogue of the convolution technique
relative to a closed subgroup. Fix a function $f\in\mathcal{C}_{c}\left(H\right)$
such that $f\geq0$ and $\int_{H}f=1$. For any function $f$ on $G$
and $s,t\in G$, let $_{s}f\left(t\right)=f\left(st\right)$.
\begin{defn}
For $u\in\mathcal{C}_{b}\left(G\right)$, define a function $\Omega_{f}\left(u\right)$
on $G$ by the formula 
\begin{eqnarray*}
\Omega_{f}\left(u\right)\left(s\right)=\int_{H}f\left(h\right)u\left(h^{-1}s\right)dh &  & \left(s\in G\right).
\end{eqnarray*}
We will show that $\Omega_{f}$ defines a bounded map on $M_{cb}A\left(G\right)$.
For a Hilbert space $\mathcal{H}$, let $\mathcal{C}_{c}\left(G,\mathcal{H}\right)$
and $\mathcal{C}_{b}\left(G,\mathcal{H}\right)$ denote the continuous
functions $G\rightarrow\mathcal{H}$ that are compactly supported
and bounded, respectively.
\end{defn}
\begin{lem}
\label{lem:Cts_cpt_supp_are_ucts}Let $\mathcal{H}$ be a Hilbert
space. If $u\in\mathcal{C}_{c}\left(G,\mathcal{H}\right)$ then for
any $\epsilon>0$ there is an open neighborhood $U$ of the identity
$e$ such that $\sup_{t\in G}\left\Vert u\left(st\right)-u\left(t\right)\right\Vert <\epsilon$
for all $s\in U$.
\end{lem}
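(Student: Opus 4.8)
The plan is to establish the standard fact that a compactly supported continuous Hilbert-space-valued function on a locally compact group is left uniformly continuous, with attention paid to the behavior of $u$ near the boundary of its support. Write $K=\operatorname{supp}(u)$, which is compact, and fix $\epsilon>0$.

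First I would combine the continuity of $u$ with the continuity of multiplication in $G$: for each $t\in K$ there is a symmetric open neighborhood $V_t$ of $e$ such that $\|u(vt)-u(t)\|<\epsilon/3$ whenever $v\in V_t$, and then a symmetric open neighborhood $O_t$ of $e$ with $O_tO_t\subseteq V_t$. The family $\{O_t t:t\in K\}$ is an open cover of the compact set $K$, so finitely many sets $O_{t_1}t_1,\dots,O_{t_n}t_n$ suffice; set $U=\bigcap_{i=1}^{n}\bigl(O_{t_i}\cap O_{t_i}^{-1}\bigr)$, an open symmetric neighborhood of $e$ contained in each $O_{t_i}$.

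Next I would verify $\|u(st)-u(t)\|<\epsilon$ for all $s\in U$ and $t\in G$ by cases according to whether $t$ or $st$ lies in $K$. If neither does, both values vanish. If $t\in K$, choose $i$ with $t\in O_{t_i}t_i$ and write $t=o\,t_i$ with $o\in O_{t_i}$; then $o\in V_{t_i}$ and $so\in UO_{t_i}\subseteq O_{t_i}O_{t_i}\subseteq V_{t_i}$, so two applications of the estimate for $V_{t_i}$ and the triangle inequality give $\|u(st)-u(t)\|\le\|u(so\,t_i)-u(t_i)\|+\|u(t_i)-u(o\,t_i)\|<2\epsilon/3$. The case $st\in K$, $t\notin K$ is symmetric: writing $st=o\,t_i$ with $o\in O_{t_i}$, we get $t=s^{-1}o\,t_i$ with $s^{-1}o\in U^{-1}O_{t_i}=UO_{t_i}\subseteq O_{t_i}O_{t_i}\subseteq V_{t_i}$ (here symmetry of $U$ enters), and the same bound applies. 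Taking the supremum over $t\in G$ gives $\sup_{t\in G}\|u(st)-u(t)\|\le 2\epsilon/3<\epsilon$.

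The only point that is not pure bookkeeping is the mixed case $t\notin K$ but $st\in K$: one must guarantee that left translation by a single $s\in U$ cannot drag a point from far outside the support into $K$ without $u$ already being small there, which is precisely why $U$ is taken symmetric and contained in each $O_{t_i}$ and why the $O_t$ were chosen with $O_tO_t\subseteq V_t$. The rest is routine manipulation of identity neighborhoods together with compactness of $K$.
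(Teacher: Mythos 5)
Your argument is correct and is precisely the standard proof (Folland, Proposition 2.6) that the paper invokes by reference, carried out verbatim with the Hilbert space norm in place of the absolute value; the case analysis around the support and the symmetric choice of $U$ handle the only delicate point. The paper gives no further detail, so your proposal matches its approach.
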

\begin{proof}
The standard proof in the case that $\mathcal{H}=\mathbb{C}$, for
example \cite[Proposition 2.6]{Folland1}, works for any Hilbert space.
\end{proof}
\begin{lem}
\label{lem:A_unif_cont_lemma}Let $\mathcal{H}$ be a Hilbert space.
If $u\in\mathcal{C}_{b}\left(G,\mathcal{H}\right)$, $s_{0}\in G$,
and $\epsilon>0$, then there is an open neighborhood $U$ of $s_{0}$
in $G$ such that $\sup_{h\in H}\left\Vert f\left(h\right)u\left(sh\right)-f\left(h\right)u\left(s_{0}h\right)\right\Vert <\epsilon$
for all $s\in U$.
\end{lem}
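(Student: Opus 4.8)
The plan is to reduce the statement to Lemma \ref{lem:Cts_cpt_supp_are_ucts} by a cutoff argument that replaces the bounded continuous function $u$ by a compactly supported one without changing the values of $u$ that actually occur. We may assume $f$ is not identically zero, so that $M:=\left\Vert f\right\Vert _{\infty}>0$, and we set $K:=\mbox{supp}\,f$, a compact subset of $H$. Since $f$ vanishes off $K$, it will suffice to control $\left\Vert f\left(h\right)u\left(sh\right)-f\left(h\right)u\left(s_{0}h\right)\right\Vert $ uniformly for $h\in K$.

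First I would fix the cutoff. The set $s_{0}K$ is a compact subset of $G$, so by Urysohn's lemma there is $\phi\in\mathcal{C}_{c}\left(G\right)$ with $0\leq\phi\leq1$ and $\phi\equiv1$ on some open set $W\supseteq s_{0}K$. Then $\phi u\in\mathcal{C}_{c}\left(G,\mathcal{H}\right)$, so Lemma \ref{lem:Cts_cpt_supp_are_ucts} provides an open neighborhood $U_{0}$ of $e$ such that $\sup_{t\in G}\left\Vert \left(\phi u\right)\left(gt\right)-\left(\phi u\right)\left(t\right)\right\Vert <\epsilon/\left(2M\right)$ for all $g\in U_{0}$. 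Next, using joint continuity of multiplication $G\times G\rightarrow G$ together with the compactness of $K$, I would produce an open neighborhood $U_{1}$ of $s_{0}$ with $U_{1}K\subseteq W$ (a routine tube-lemma argument: each point $\left(s_{0},h\right)$ with $h\in K$ lies in a product neighborhood mapped into $W$, and finitely many of these cover $\left\{ s_{0}\right\} \times K$).

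Now put $U:=U_{1}\cap\left(U_{0}s_{0}\right)$, an open neighborhood of $s_{0}$. For $s\in U$ set $g:=ss_{0}^{-1}\in U_{0}$. Given $h\in K$, both $s_{0}h\in s_{0}K\subseteq W$ and $sh=g\left(s_{0}h\right)\in U_{1}K\subseteq W$ lie in the set where $\phi\equiv1$, so $u\left(sh\right)=\left(\phi u\right)\left(g\left(s_{0}h\right)\right)$ and $u\left(s_{0}h\right)=\left(\phi u\right)\left(s_{0}h\right)$. Hence
\[
\left\Vert f\left(h\right)u\left(sh\right)-f\left(h\right)u\left(s_{0}h\right)\right\Vert \leq M\left\Vert \left(\phi u\right)\left(g\left(s_{0}h\right)\right)-\left(\phi u\right)\left(s_{0}h\right)\right\Vert <M\cdot\frac{\epsilon}{2M}=\frac{\epsilon}{2}
\]
for every $h\in K$, while the left-hand side vanishes for $h\notin K$. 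Taking the supremum over $h\in H$ gives the bound $\epsilon/2<\epsilon$, as required.

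The two neighborhood constructions are routine; the only real content is the observation that multiplying by $\phi$ brings the problem into the scope of Lemma \ref{lem:Cts_cpt_supp_are_ucts}. The one point demanding care — and the main (minor) obstacle — is the bookkeeping between the left translation $g=ss_{0}^{-1}$ and the compact set $s_{0}K$, so that $\phi$ is genuinely identically $1$ at all the points $sh$ that arise for $s\in U$ and $h\in K$; this is exactly what the condition $U_{1}K\subseteq W$ secures.
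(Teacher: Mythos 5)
Your proof is correct. It reaches the same key lemma as the paper, namely Lemma \ref{lem:Cts_cpt_supp_are_ucts}, but by a different reduction. The paper extends $f$ from the closed subgroup $H$ to a function $f^{\prime}\in\mathcal{C}_{c}\left(G\right)$, reduces to the case $s_{0}=e$ by left translation, applies the lemma to both $f^{\prime}u$ and $f^{\prime}$, and splits $f^{\prime}\left(t\right)u\left(st\right)-f^{\prime}\left(t\right)u\left(t\right)$ by a triangle inequality through the intermediate term $f^{\prime}u\left(st\right)$; the price is the extension step and a two-term estimate that also requires control of $\left|f^{\prime}\left(st\right)-f^{\prime}\left(t\right)\right|$. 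You instead leave $f$ untouched, bounding it crudely by $\left\Vert f\right\Vert _{\infty}$, and truncate $u$ by a plateau function $\phi$ equal to $1$ on a neighborhood $W$ of $s_{0}K$, applying the lemma only to $\phi u$; the price is the Urysohn construction and the tube-lemma containment $U_{1}K\subseteq W$, which is exactly what guarantees that every point $sh$ actually occurring in the supremum lies where $\phi\equiv1$, so that $\phi u$ and $u$ agree there. Your version also handles general $s_{0}$ directly rather than translating back to the identity. The two arguments are of comparable length and difficulty; yours has the mild advantage of not invoking the Tietze-type extension of $f$ from the closed set $H$ to $G$.
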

\begin{proof}
If $u=0$, then the claim trivially holds, so assume $u\neq0$. Since
$H$ is closed in $G$, the function $f$ extends to a continuous
compactly supported function $f^{\prime}$ on $G$. Assume that $s_{0}=e$.
Since $f^{\prime}u$ is compactly supported, Lemma \ref{lem:Cts_cpt_supp_are_ucts}
yields an open neighborhood $U$ of $e$ such that 
\begin{eqnarray*}
\sup_{t\in G}\left\Vert f^{\prime}u\left(st\right)-f^{\prime}u\left(t\right)\right\Vert <\frac{\epsilon}{2} & \mbox{and} & \sup_{t\in G}\left|f^{\prime}\left(st\right)-f^{\prime}\left(t\right)\right|<\frac{\epsilon}{2\left\Vert u\right\Vert _{\infty}}
\end{eqnarray*}
 for all $s\in U$. Then
\begin{eqnarray*}
\sup_{h\in H}\left\Vert f\left(h\right)u\left(sh\right)-f\left(h\right)u\left(h\right)\right\Vert  & \leq & \sup_{t\in G}\left\Vert f^{\prime}\left(t\right)u\left(st\right)-f^{\prime}\left(t\right)u\left(t\right)\right\Vert \\
 & \leq & \sup_{t\in G}\left(\left\Vert f^{\prime}\left(t\right)u\left(st\right)-f^{\prime}u\left(st\right)\right\Vert +\right.\\
 &  & \left.\,\,\,\,\,\,\,\,\,\,\,\,\left\Vert f^{\prime}u\left(st\right)-f^{\prime}\left(t\right)u\left(t\right)\right\Vert \right)\\
 & < & \left\Vert u\right\Vert _{\infty}\sup_{t\in G}\left|f^{\prime}\left(st\right)-f^{\prime}\left(t\right)\right|+\frac{\epsilon}{2}<\epsilon,
\end{eqnarray*}
for all $s\in U$. For $s_{0}\neq e$, the above argument with $u$
replaced by $_{s_{0}}u$ yields a neighborhood $U$ of $e$ and $s_{0}U$
is then the desired neighborhood of $s_{0}$.
\end{proof}
\begin{prop}
\label{pro:Averaging_Fourier_multipliers}If $u\in M_{cb}A\left(G\right)$,
then $\Omega_{f}\left(u\right)\in M_{cb}A\left(G\right)$ with $\left\Vert \Omega_{f}\left(u\right)\right\Vert _{M_{cb}A\left(G\right)}\leq\left\Vert u\right\Vert _{M_{cb}A\left(G\right)}$
and $r_{H}\Omega_{f}\left(u\right)=f\ast r_{H}\left(u\right)$.
\end{prop}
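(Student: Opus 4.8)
The plan is to feed a given $u\in M_{cb}A(G)$ through Gilbert's representation theorem (Theorem \ref{thm:Gilbert}) and then average the ``right'' symbol over $H$. So write $u(s^{-1}t)=\langle P(t)\,|\,Q(s)\rangle$ for bounded continuous $P,Q\colon G\to\mathcal H$ with $\|P\|_\infty\|Q\|_\infty$ as close as desired to $\|u\|_{M_{cb}A(G)}$, put $\widetilde P=P$, and define
\[
\widetilde Q(s)=\int_H f(h)\,Q(sh)\,dh ,
\]
the $\mathcal H$-valued Bochner integral of the compactly supported continuous function $h\mapsto f(h)Q(sh)$ against Haar measure on $H$; this exists because $f\,dh$ is a finite measure supported on the compact set $K=\operatorname{supp}f$. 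The easy half is the norm bound: $\|\widetilde Q(s)\|\le\int_H f(h)\|Q(sh)\|\,dh\le\|Q\|_\infty$, so $\|\widetilde Q\|_\infty\le\|Q\|_\infty$, while $\|\widetilde P\|_\infty=\|P\|_\infty$.

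The step that needs Lemma \ref{lem:A_unif_cont_lemma} is the continuity of $\widetilde Q$. Given $s_0\in G$ and $\epsilon>0$, that lemma applied to the Hilbert-space valued function $Q$ produces a neighbourhood $U$ of $s_0$ with $\sup_{h\in H}\|f(h)Q(sh)-f(h)Q(s_0h)\|<\epsilon/\mu_H(K)$ for $s\in U$; since the integrand $f(h)\bigl(Q(sh)-Q(s_0h)\bigr)$ vanishes off $K$, we get $\|\widetilde Q(s)-\widetilde Q(s_0)\|\le\mu_H(K)\sup_{h\in H}\|f(h)Q(sh)-f(h)Q(s_0h)\|<\epsilon$. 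Hence $\widetilde Q\in\mathcal C_b(G,\mathcal H)$, and similarly $\widetilde P=P$ is bounded continuous.

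Next I would record the representation identity. Since $\langle P(t)\,|\,\cdot\,\rangle$ is a bounded linear functional it passes through the Bochner integral, so for all $s,t\in G$,
\[
\langle\widetilde P(t)\,|\,\widetilde Q(s)\rangle=\int_H f(h)\langle P(t)\,|\,Q(sh)\rangle\,dh=\int_H f(h)\,u\bigl((sh)^{-1}t\bigr)\,dh=\int_H f(h)\,u\bigl(h^{-1}s^{-1}t\bigr)\,dh=\Omega_f(u)(s^{-1}t),
\]
the last equality being the definition of $\Omega_f$. In particular the quantity on the right depends only on $s^{-1}t$, so Theorem \ref{thm:Gilbert} applies and gives $\Omega_f(u)\in M_{cb}A(G)$ with $\|\Omega_f(u)\|_{M_{cb}A(G)}\le\|\widetilde P\|_\infty\|\widetilde Q\|_\infty\le\|P\|_\infty\|Q\|_\infty$; taking the infimum over Gilbert representations of $u$ yields $\|\Omega_f(u)\|_{M_{cb}A(G)}\le\|u\|_{M_{cb}A(G)}$.

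Finally, for $h_0\in H$ one computes $\Omega_f(u)(h_0)=\int_H f(h)u(h^{-1}h_0)\,dh=\int_H f(h)\,r_H(u)(h^{-1}h_0)\,dh=(f\ast r_H(u))(h_0)$, which is exactly the claim $r_H\Omega_f(u)=f\ast r_H(u)$ (the left side lying in $M_{cb}A(H)$ because $r_H$ maps $M_{cb}A(G)$ into $M_{cb}A(H)$, the right side by Theorem \ref{thm:Knudby_lemma} applied to $H$). The only points requiring care are the existence of the vector-valued integral and its interchange with $\langle P(t)\,|\,\cdot\,\rangle$ (routine, since $f\,dh$ is a compactly supported finite measure) and the continuity of $\widetilde Q$; I expect the latter — handled precisely by Lemma \ref{lem:A_unif_cont_lemma} — to be the main thing to get right.
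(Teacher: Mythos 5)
Your proposal is correct and follows essentially the same route as the paper: apply Gilbert's theorem, average the second symbol $Q$ over $H$ against $f$, verify boundedness and (via Lemma \ref{lem:A_unif_cont_lemma}) continuity of the averaged symbol, and conclude by the norm characterization in Gilbert's theorem. The only cosmetic difference is that the paper routes the continuity argument through an auxiliary map into $L^{1}\left(H,\mathcal{H}\right)$, whereas you estimate the Bochner integral directly; the underlying estimate is identical.
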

\begin{proof}
Let $u\in M_{cb}A\left(G\right)$ and apply Gilbert's theorem to obtain
a Hilbert space $\mathcal{H}$ and functions $P,Q\in\mathcal{C}_{b}\left(G,\mathcal{H}\right)$
such that $u\left(s^{-1}t\right)=\left\langle P\left(t\right)|Q\left(s\right)\right\rangle $
for all $s,t\in G$. Then, for $s,t\in G$,
\[
{\displaystyle \Omega_{f}\left(u\right)\left(s^{-1}t\right)=\int_{H}f\left(h\right)u\left(h^{-1}s^{-1}t\right)dh=\left\langle P\left(t\right)\left|\int_{H}f\left(h\right)Q\left(sh\right)dh\right.\right\rangle }.
\]
We show that $q\left(s\right)=\int_{H}f\left(h\right)Q\left(sh\right)dh$
defines a bounded continuous function on $G$, from which it will
follow that $\Omega_{f}\left(u\right)$ is in $M_{cb}A\left(G\right)$,
again by Gilbert's theorem. Define $Q^{\prime}:G\rightarrow L^{1}\left(H,\mathcal{H}\right)$
by $Q^{\prime}\left(s\right)=f\left(_{s}Q\right)$, which maps into
$L^{1}\left(H,\mathcal{H}\right)$ since $f$ has compact support.
Set $K=\mbox{supp}f$ and let $\left|K\right|$ denote the Haar measure
of $K$, which is nonzero and finite by continuity of the nonzero,
compactly supported function $f$. Given $s_{0}\in G$ and $\epsilon>0$,
Lemma \ref{lem:A_unif_cont_lemma} yields an open neighborhood $U$
of $s_{0}$ in $G$ such that 
\[
\left\Vert Q^{\prime}\left(s\right)-Q^{\prime}\left(s_{0}\right)\right\Vert _{L^{\infty}\left(H,\mathcal{H}\right)}=\sup_{h\in H}\left\Vert f\left(h\right)Q\left(sh\right)-f\left(h\right)Q\left(s_{0}h\right)\right\Vert <\frac{\epsilon}{\left|K\right|}
\]
for all $s\in U$. Since $Q^{\prime}\left(s\right)$ is supported
in $K$ for every $s\in G$, it follows that
\begin{eqnarray*}
\left\Vert Q^{\prime}\left(s\right)-Q^{\prime}\left(s_{0}\right)\right\Vert _{L^{1}\left(H,\mathcal{H}\right)} & = & \left\Vert \chi_{K}\left(Q^{\prime}\left(s\right)-Q^{\prime}\left(s_{0}\right)\right)\right\Vert _{L^{1}\left(H,\mathcal{H}\right)}\\
 & \leq & \left\Vert \chi_{K}\right\Vert _{L^{1}\left(H,\mathcal{H}\right)}\left\Vert Q^{\prime}\left(s\right)-Q^{\prime}\left(s_{0}\right)\right\Vert _{L^{\infty}\left(H,\mathcal{H}\right)}\\
 & < & \epsilon
\end{eqnarray*}
for all $s\in U$. Thus $Q^{\prime}$ is continuous and so too is
$q$, the latter being the composition of $Q^{\prime}$ with the bounded
map $L^{1}\left(H,\mathcal{H}\right)\rightarrow\mathcal{H}:g\mapsto\int_{H}g$.

Using that $f$ is nonnegative with mass one, if $s\in G$, then $\left\Vert q\left(s\right)\right\Vert \leq\int_{H}f\left(h\right)\left\Vert Q\left(sh\right)\right\Vert dh\leq\left\Vert Q\right\Vert _{\infty}$,
so $q$ is bounded with $\left\Vert q\right\Vert _{\infty}\leq\left\Vert Q\right\Vert _{\infty}$.
By the norm characterization of Gilbert's theorem, $\left\Vert \Omega_{f}\left(u\right)\right\Vert _{M_{cb}A\left(G\right)}\leq\left\Vert P\right\Vert _{\infty}\left\Vert q\right\Vert _{\infty}\leq\left\Vert P\right\Vert _{\infty}\left\Vert Q\right\Vert _{\infty}$
and since $P$, $Q$, and $\mathcal{H}$ are an arbitrary representation
of $u$, we conclude that $\left\Vert \Omega_{f}\left(u\right)\right\Vert _{M_{cb}A\left(G\right)}\leq\left\Vert u\right\Vert _{M_{cb}A\left(G\right)}$.
Finally, we have for $s\in H$ that
\begin{eqnarray*}
r_{H}\Omega_{f}\left(u\right)\left(s\right) & = & \int_{H}f\left(h\right)u\left(h^{-1}s\right)dh\\
 & = & \int_{H}f\left(h\right)r_{H}\left(u\right)\left(h^{-1}s\right)dh\\
 & = & f\ast r_{h}\left(u\right)\left(s\right).
\end{eqnarray*}
\end{proof}
\begin{thm}
\label{thm:Averaging_Fourier_multipliers_conv}Let $\left(m_{\alpha}\right)_{\alpha}$
be a bounded net in $M_{cb}A\left(G\right)$ and let $m\in M_{cb}A\left(H\right)$.
The following hold:
\begin{enumerate}
\item If $r_{H}\left(m_{\alpha}\right)\overset{w^{*}}{\rightarrow}m$ in
$M_{cb}A\left(H\right)$, then $r_{H}\Omega_{f}\left(m_{\alpha}\right)\overset{ucs}{\rightarrow}f\ast m$.
\item If $r_{H}\left(m_{\alpha}\right)\overset{ucs}{\rightarrow}m$, then
$\left\Vert r_{H}\Omega_{f}\left(m_{\alpha}\right)u-\left(f\ast m\right)u\right\Vert _{A\left(H\right)}\rightarrow0$
for all $u\in A\left(H\right)$.
\end{enumerate}
\end{thm}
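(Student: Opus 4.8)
The plan is to deduce both statements directly from Knudby's lemma (Theorem~\ref{thm:Knudby_lemma}), applied to the locally compact group $H$ in place of $G$, combined with the compatibility of $\Omega_f$ with restriction recorded in Proposition~\ref{pro:Averaging_Fourier_multipliers}. The decisive point is that $r_H\Omega_f(m_\alpha)=f\ast r_H(m_\alpha)$ for every $\alpha$, so the net whose convergence is to be controlled is precisely the net of convolutions $(f\ast r_H(m_\alpha))_\alpha$ that appears in the conclusions of Theorem~\ref{thm:Knudby_lemma}.

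First I would verify that Theorem~\ref{thm:Knudby_lemma} is applicable over $H$. The restriction $r_H\colon M_{cb}A(G)\to M_{cb}A(H)$ is a complete contraction \cite[Proposition~1.12]{dCH}, so $(r_H(m_\alpha))_\alpha$ is a bounded net in $M_{cb}A(H)$, of bound at most $\sup_\alpha\|m_\alpha\|_{M_{cb}A(G)}$. The fixed function $f$ lies in $\mathcal{C}_c(H)$, is nonnegative, and satisfies $\int_H f=1$; moreover $m\in M_{cb}A(H)$ by hypothesis, and $f\ast m\in M_{cb}A(H)$ since left convolution by $f$ is a contraction on $M_{cb}A(H)$. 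Thus all hypotheses of Theorem~\ref{thm:Knudby_lemma}, read with $G$ replaced by $H$, are in force.

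For (1): assuming $r_H(m_\alpha)\overset{w^{*}}{\rightarrow}m$ in $M_{cb}A(H)$, part~(1) of Theorem~\ref{thm:Knudby_lemma} gives $f\ast r_H(m_\alpha)\overset{ucs}{\rightarrow}f\ast m$, and rewriting the left-hand side via Proposition~\ref{pro:Averaging_Fourier_multipliers} yields $r_H\Omega_f(m_\alpha)\overset{ucs}{\rightarrow}f\ast m$. For (2): assuming $r_H(m_\alpha)\overset{ucs}{\rightarrow}m$, part~(2) of Theorem~\ref{thm:Knudby_lemma} gives $\|(f\ast r_H(m_\alpha))u-(f\ast m)u\|_{A(H)}\to 0$ for all $u\in A(H)$, which is the assertion after the same substitution. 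Since the analytic content — that $\Omega_f$ maps $M_{cb}A(G)$ contractively into itself and intertwines $r_H$ with convolution by $f$ — has already been dispatched in Proposition~\ref{pro:Averaging_Fourier_multipliers}, no genuine obstacle remains here; the only point one must not overlook is that the restricted net $(r_H(m_\alpha))_\alpha$ is automatically bounded in $M_{cb}A(H)$, which is what licenses the appeal to Knudby's lemma.
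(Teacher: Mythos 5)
Your argument is correct and is exactly the paper's approach: the paper also deduces both parts immediately from Theorem~\ref{thm:Knudby_lemma} (applied over $H$) together with the identity $r_{H}\Omega_{f}\left(m_{\alpha}\right)=f\ast r_{H}\left(m_{\alpha}\right)$ from Proposition~\ref{pro:Averaging_Fourier_multipliers}. Your added checks --- that $\left(r_{H}\left(m_{\alpha}\right)\right)_{\alpha}$ is bounded because $r_{H}$ is a complete contraction, and that $f$ satisfies the hypotheses of Knudby's lemma on $H$ --- are precisely the details the paper leaves implicit.
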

\begin{proof}
These follow immediately from Theorem \ref{thm:Knudby_lemma} and
Proposition \ref{pro:Averaging_Fourier_multipliers}.
\end{proof}
In our applications, the preceding theorem will be applied with $m=1_{H}$,
which is fixed under convolution with $f$ on the left. We list some
additional properties that the map $\Omega_{f}$ enjoys.
\begin{enumerate}
\item An argument very similar to that establishing Proposition \ref{pro:Averaging_Fourier_multipliers}
shows that $\Omega_{f}\left(u\right)$ is bounded and continuous for
any bounded continuous function $u$ on $G$.
\item If $u=\left\langle \pi\left(\cdot\right)\xi|\eta\right\rangle $ is
a coefficient of the unitary representation $\pi$ of $G$, then
\begin{eqnarray*}
\Omega_{f}\left(u\right)\left(s\right) & = & \int_{H}f\left(h\right)\left\langle \pi\left(s\right)\xi|\pi\left(h\right)\eta\right\rangle dh\\
 & = & \left\langle \pi\left(s\right)\xi\left|\left(\int_{H}f\left(h\right)\pi\left(h\right)dh\right)\eta\right.\right\rangle ,
\end{eqnarray*}
so $\Omega_{f}\left(u\right)\in B\left(G\right)$, and from $\left\Vert \int_{H}f\left(h\right)\pi\left(h\right)dh\right\Vert \leq\int_{H}f\left(h\right)\left\Vert \pi\left(h\right)\right\Vert dh=1$
it follows that $\left\Vert \Omega_{f}\left(u\right)\right\Vert _{B\left(G\right)}\leq\left\Vert u\right\Vert _{B\left(G\right)}$.
Thus $\Omega_{f}$ restricts to a contraction on $B\left(G\right)$
and moreover restricts to a contraction on $A\left(G\right)$, since
$\Omega_{f}\left(u\right)$ is a coefficient of the same representation
as $u$.
\item An argument similar to that establishing the weak$^{*}$ continuity
of the map $\Phi_{f}$ in the proof of \cite[Lemma 1.16]{HK} shows
that $\Omega_{f}$ is weak$^{\ast}$ continuous on $M_{cb}A\left(G\right)$
with preadjoint mapping $g\in L^{1}\left(G\right)$ to the $L^{1}\left(G\right)$
function $s\mapsto\int_{H}f\left(h\right)g\left(hs\right)dh$.
\end{enumerate}
Say that a net $\left(m_{\alpha}\right)_{\alpha}$ of functions on
a topological space $X$ \emph{converges locally eventually to zero}
\emph{on $A\subset X$} and write $m_{\alpha}\overset{le}{\rightarrow}0$
if for any compact subset $K$ of $A$ there is an index $\alpha_{0}$
such that $\left.m_{\alpha}\right|_{K}=0$ for all $\alpha\geq\alpha_{0}$.
\begin{prop}
\label{pro:Conditions_for_BAI}If $\left(m_{\alpha}\right)_{\alpha}$
is a bounded net in $M_{cb}A\left(G\right)$ and $m_{\alpha}^{\prime}=\Omega_{f}\left(m_{\alpha}\right)$,
then the net $\left(m_{\alpha}^{\prime}\right)_{\alpha}$ has the
same norm bound as $\left(m_{\alpha}\right)_{\alpha}$ and
\begin{enumerate}
\item if $r_{H}\left(m_{\alpha}\right)\overset{ucs}{\rightarrow}1_{H}$,
then $\left\Vert u\cdot r_{H}\left(m_{\alpha}^{\prime}\right)-u\right\Vert _{A\left(H\right)}\rightarrow0$
for all $u\in A\left(H\right)$, and
\item if $m_{\alpha}\overset{le}{\rightarrow}0$ on $G\setminus H$, then
$m_{\alpha}^{\prime}\overset{le}{\rightarrow}0$ on $G\setminus H$.
\end{enumerate}
If the bounded net $\left(m_{\alpha}\right)_{\alpha}$ satisfies the
hypotheses of both (1) and (2), then $\left(m_{\alpha}^{\prime}\right)_{\alpha}$
is a bounded approximate indicator for $H$.

\end{prop}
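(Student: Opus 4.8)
The plan is to treat the three assertions separately, in each case reducing to material already developed in this section. The norm-bound claim needs nothing new: Proposition \ref{pro:Averaging_Fourier_multipliers} gives $\Vert\Omega_f(m_\alpha)\Vert_{M_{cb}A(G)}\le\Vert m_\alpha\Vert_{M_{cb}A(G)}$ for every $\alpha$, so any bound on $(m_\alpha)_\alpha$ transfers verbatim to $(m_\alpha^\prime)_\alpha$.

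For (1), I would first record two elementary facts: $1_H\in M_{cb}A(H)$, and $f\ast 1_H=1_H$ because $\int_H f=1$. With these in hand, the hypothesis $r_H(m_\alpha)\overset{ucs}{\rightarrow}1_H$ is exactly what is needed to apply Theorem \ref{thm:Averaging_Fourier_multipliers_conv}(2) with $m=1_H$; combined with the identity $r_H\Omega_f(m_\alpha)=f\ast r_H(m_\alpha)$ of Proposition \ref{pro:Averaging_Fourier_multipliers} and the resulting $(f\ast 1_H)u=u$, this gives $\Vert u\cdot r_H(m_\alpha^\prime)-u\Vert_{A(H)}\rightarrow 0$ for every $u\in A(H)$.

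For (2), I would fix a compact set $K\subset G\setminus H$ and put $L=\mbox{supp}f$, a compact subset of $H$. The crux is that $L^{-1}K$ is again a compact subset of $G\setminus H$: it is the continuous image of $L\times K$ under $(h,s)\mapsto h^{-1}s$, hence compact, and if $h^{-1}s\in H$ for some $h\in L\subset H$ and $s\in K$, then $s=h(h^{-1}s)\in H$, contradicting $s\notin H$. The hypothesis $m_\alpha\overset{le}{\rightarrow}0$ on $G\setminus H$ then furnishes an index $\alpha_0$ with $m_\alpha$ vanishing on $L^{-1}K$ for all $\alpha\ge\alpha_0$; since $m_\alpha^\prime(s)=\int_L f(h)m_\alpha(h^{-1}s)dh$ for $s\in G$, it follows that $m_\alpha^\prime$ vanishes on $K$ for all $\alpha\ge\alpha_0$, which is exactly $m_\alpha^\prime\overset{le}{\rightarrow}0$ on $G\setminus H$.

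For the final claim, assume both hypotheses. Then $(m_\alpha^\prime)_\alpha$ is bounded and, by (1), satisfies property (a) in the definition of a bounded approximate indicator (Definition \ref{def:Subgroup_approx_properties}); it remains to verify property (b), i.e. $\Vert u m_\alpha^\prime\Vert_{A(G)}\rightarrow 0$ for all $u\in I_{A(G)}(H)$. Since $H$ is a set of spectral synthesis for $A(G)$, it suffices to handle $u\in A(G)$ with compact support $K$ disjoint from $H$; for such $u$, part (2) gives $u m_\alpha^\prime=0$ eventually, hence $\Vert u m_\alpha^\prime\Vert_{A(G)}\rightarrow 0$. The general case follows by approximating $u\in I_{A(G)}(H)$ in $A(G)$-norm by such compactly supported functions and using the uniform bound $\Vert w m_\alpha^\prime\Vert_{A(G)}\le\Vert w\Vert_{A(G)}\Vert m_\alpha^\prime\Vert_{M_{cb}A(G)}$ to pass to the limit. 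I expect the only points requiring genuine (if modest) thought to be the compactness-and-disjointness argument for $L^{-1}K$ in (2) and the appeal to spectral synthesis in the last step; everything else is formal bookkeeping.
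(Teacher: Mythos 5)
Your proposal is correct and follows essentially the same route as the paper: the norm bound via the contractivity of $\Omega_{f}$, part (1) via Theorem \ref{thm:Averaging_Fourier_multipliers_conv} with $m=1_{H}$, part (2) via vanishing on $\left(\mbox{supp}f\right)^{-1}K$, and the final claim via spectral synthesis of $H$ plus the multiplier norm bound. Your explicit check that $L^{-1}K$ is a compact subset of $G\setminus H$ is a detail the paper leaves implicit, but otherwise the arguments coincide.
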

\begin{proof}
The claim regarding norm bounds holds since the map $\Omega_{f}$
of Section \ref{sec:Averaging_over_subgroups} is a contraction on
$M_{cb}A\left(G\right)$.

(1) If $r_{H}\left(m_{\alpha}\right)\overset{ucs}{\rightarrow}1_{H}$,
then, since restriction is a contraction from $M_{cb}A\left(G\right)$
into $M_{cb}A\left(H\right)$, the net $\left(r_{H}\left(m_{\alpha}\right)\right)_{\alpha}$
is bounded and (1) follows from Theorem \ref{thm:Averaging_Fourier_multipliers_conv}.

(2) Suppose that $m_{\alpha}\overset{le}{\rightarrow}0$ on $G\setminus H$.
Let $K\subset G\setminus H$ be compact and choose $\alpha_{0}$ such
that $\alpha\geq\alpha_{0}$ implies $m_{\alpha}=0$ on the compact
set $\left(\mbox{supp}\left(f\right)\right)^{-1}K$. For $\alpha\ge\alpha_{0}$,
if $s\in K$ and $h\in H$, then $f\left(h\right)m_{\alpha}\left(h^{-1}s\right)=0$
since either $h\notin\mbox{supp}\left(f\right)$ or $h^{-1}s\in\left(\mbox{supp}\left(f\right)\right)^{-1}K$,
implying that $m_{\alpha}^{\prime}\left(s\right)=\int_{H}f\left(h\right)m_{\alpha}\left(h^{-1}s\right)dh=0$.
Therefore $m_{\alpha}^{\prime}=0$ on $K$, for all $\alpha\geq\alpha_{0}$.

If $\left(m_{\alpha}\right)_{\alpha}$ satisfies the hypotheses of
both (1) and (2), then $\left(m_{\alpha}^{\prime}\right)_{\alpha}$
satisfies the first condition of Definition \ref{def:Subgroup_approx_properties}.
If $u\in I_{A\left(G\right)}\left(H\right)$ has compact support,
then $m_{\alpha}^{\prime}u=0$ eventually by (2), so certainly $\left\Vert um_{\alpha}^{\prime}\right\Vert _{A\left(G\right)}\rightarrow0$.
Using that $H$ is of spectral synthesis in $A\left(G\right)$, if
$u\in I_{A\left(G\right)}\left(H\right)$ is arbitrary, then given
$\epsilon>0$ choose $u_{0}\in I_{A\left(G\right)}\left(H\right)$
of compact support with $\left\Vert u-u_{0}\right\Vert _{A\left(G\right)}<\epsilon$.
For sufficiently large $\alpha$, 
\[
\left\Vert um_{\alpha}^{\prime}\right\Vert _{A\left(G\right)}\leq\left\Vert u_{0}m_{\alpha}^{\prime}\right\Vert _{A\left(G\right)}+\left\Vert u_{0}m_{\alpha}^{\prime}-um_{\alpha}^{\prime}\right\Vert _{A\left(G\right)}<\epsilon\left\Vert m_{\alpha}^{\prime}\right\Vert _{M_{cb}A\left(G\right)},
\]
and thus $\left\Vert um_{\alpha}^{\prime}\right\Vert _{A\left(G\right)}\rightarrow0$
by boundedness of $\left(m_{\alpha}^{\prime}\right)_{\alpha}$.
\end{proof}
Proposition \ref{pro:Conditions_for_BAI} allows one to obtain approximate
indicators consisting of cb-multipliers by verifying the same conditions
that yielded approximate indicators in \cite{ARS}.

\end{document}